\documentclass[preprint,10pt]{elsarticle}
\usepackage{amssymb}
\usepackage{amsmath}
\usepackage{amsfonts}
\usepackage{float}
\usepackage{algorithm}
\usepackage{algpseudocode}
\usepackage{amsthm}
\usepackage{graphicx}
\usepackage{subfigure}
\usepackage{graphics}
\usepackage{epsfig}
\usepackage{mathrsfs}
\usepackage{lineno}
\usepackage{makecell}
\usepackage{txfonts}
\usepackage{booktabs}
\usepackage{multirow}
\usepackage{threeparttable}
\usepackage{paralist}
\usepackage{algorithm}
\usepackage{graphics}
\usepackage{epsfig}
\usepackage{natbib}
\usepackage{epstopdf}
\usepackage{natbib}
\linespread{1.2}
\allowdisplaybreaks[4]
\biboptions{numbers,sort&compress}
\usepackage[colorlinks=true]{hyperref}
\hypersetup{urlcolor=red, citecolor=blue}
\usepackage[top=1in,bottom=1in,left=1in,right=1in]{geometry}
\numberwithin{figure}{section}
\makeatletter
\@addtoreset{algorithm}{section}
\makeatother
\vfuzz2pt 
\hfuzz2pt 

\newtheorem{theorem}{Theorem}[section]

\newtheorem{definition}[theorem]{Definition}
\newtheorem{lemma}{Lemma}[section]
\newtheorem{example}{Example}[section]
\numberwithin{equation}{section}

\newtheorem{remark}{Remark}[section]

\numberwithin{table}{section}
\begin{document}
	\begin{frontmatter}
		\title{A sketch-and-project method for solving the matrix equation $AXB=C$  \footnote{	
				The first author's research was supported by the Fundamental Research Funds for the Central Universities (grant number 18CX02041A) and the Shandong Provincial Natural Science Foundation (grant number ZR2020MD060). The fourth author's research was supported by the National Natural Science Foundation of China (grant number 42176011, 62231028)}}
		\date{}
			\author{Wendi Bao}
		\ead{baowendi@sina.com}
		\author{Zhiwei Guo }
		\ead{gzw\_13605278246@163.com}
		\author{Weiguo Li}
		\ead{liwg@upc.edu.cn}
		\author{Ying Lv}
		\ead{lyrr1017@163.com}
		\author{Jichao Wang}
		\ead{wangjc@upc.edu.cn}
		\address{College of Science,
			China University of Petroleum, Qingdao 266580, P.R. China}
			
		\begin{abstract}
			In this paper, based on an optimization problem, a sketch-and-project method for solving the linear matrix equation $AXB=C$ is proposed. We provide a thorough convergence analysis for the new method and derive a lower bound on the convergence rate and some convergence conditions including the case that the coefficient matrix is rank deficient. By varying three parameters in the new method and convergence theorems, the new method recovers an array of well-known algorithms and their convergence results. Meanwhile, with the use of Gaussian sampling, we can obtain the Gaussian global randomized Kaczmarz (GaussGRK) method which shows some advantages in solving the matrix equation $AXB=C$. Finally, numerical experiments are given to illustrate the effectiveness of recovered methods.
		\end{abstract}
		
		\begin{keyword}
			Matrix equation; Iterative method; Randomized Kaczmarz method; Randomized coordinate descent method; Gaussian sampling
		\end{keyword}
		
	\end{frontmatter}
		
	\section{Introduction}
	\label{sec:intro}
	
	In this paper, we consider the linear matrix equation
	\begin{equation}\label{eq:1.1}
		AXB=C,
	\end{equation}
	where coefficient matrices $A\in\mathbb{R}{^{p\times m}}$ and $B\in\mathbb{R}{^{n\times q}}$ , a right-hand side $C\in\mathbb{R}{^{p\times q}}$, and an unknown matrix $X\in\mathbb{R}^{m\times n}$. We shall assume throughout that the equation is consistent, there exists an $X^*$ satisfying $AX^*B=C$. This assumption can be relaxed by choosing the least norm solution when the system has multiple solutions. The large-scale linear matrix equation arises in computer science, engineering, mathematical computing, machine learning, and many other fields such as surface fitting in computer-aided geometric design (CAGD) \cite{LH2018}, signal and image processing \cite{MR1989}, photogrammetry, etc.
	
		Classical solvers for the matrix equation \eqref{eq:1.1} are generally fall into two categories: direct and iterative methods. Direct methods, such as the generalized singular value decomposition and QR-factorization-based algorithms \cite{HD1990, ZH1995} are attractive when $A$ and $B$ are small and dense, while iterative methods are usually more practical in the field of large-scale system of equations \cite{FD2005, XW2013, ZT2017}. It is universally known that the matrix equation \eqref{eq:1.1} can be written as the following equivalent matrix-vector form by the Kronecker product
	\begin{equation}\label{eq:1.2}
		\left(B^{\top} \otimes A\right)vec\left(X\right)=vec\left(C\right),
	\end{equation}
	where the Kronecker product $\left(B^{\top} \otimes A\right) \in\mathbb{R}{^{pq\times mn}}$, the right-side vector $vec\left(C\right) \in\mathbb{R}{^{pq\times 1}}$, and the unknown vector $vec\left(X\right) \in\mathbb{R}{^{mn\times 1}}$. Many iteration methods are proposed \cite{DS2008, PZ2010} to solve the matrix equation \eqref{eq:1.1} by applying the Kronecker product. When the dimensions of A and B are large, the dimension of linear system \eqref{eq:1.2} increases sharply, which increases the memory usage and calculation cost of numerical algorithms. Many iterative methods frequently use the matrix-matrix product operation. Consequently, a lot of computing time consumes.

	Many recent researches show that Kaczmarz-type methods are suitable for large-scale problems since each Kaczmarz iterate requires only one row of the coefficient matrix and no matrix-vector product. In \cite{NYQ2022}, to solve large-scale consistent linear matrix equations \eqref {eq:1.1}, Niu and Zheng proposed the global randomized block Kaczmarz (GRBK) algorithm and the global randomized average block Kaczmarz (GRABK) algorithm. Based on greedy ideas, Wu et al. \cite{WNC2022} introduced the relaxed greedy randomized Kaczmarz (ME-RGRK) method and the maximal weighted residual Kaczmarz (ME-MWRK) method for solving consistent matrix equation $AXB = C$.
	In \cite{DK2021}, Du et al. extended Kaczmarz methods to the randomized block coordinate descent (RBCD) method for solving the matrix least-squares problem $\min\limits_{X \in\mathbb{R}{^{m\times n}}} \left\|C-AXB\right\|_F$.  Meanwhile, by applying the Kaczmarz iterations and the hierarchical approach, Shafiei and Hajarian obtained new iterative algorithms for solving the Sylvester matrix equation in \cite{SGS2022}. For linear systems $Ax=b$, Robert M. Gower et al. \cite{GRM} constructed a sketch-and-project method, which unifies a variety of randomized iterative methods
	including both randomized Kaczmarz and coordinate descent along with all of their block variants. The general sketch-and-project framework has not yet been analyzed for the matrix equation $AXB=C$.
	
	Inspired by the idea in \cite{GRM} and \cite{SGS2022}, we propose a sketch-and-project method for solving the matrix equation \eqref{eq:1.1}.	The convergent analysis of the proposed method is investigated and existing complexity results for known variants can be obtained. A lower bound on the convergence rate is explored for the evolution of the expected iterates. Numerical experiments are given to verify the validity of recovered methods.
	
	The main contribution of our work is summarized as follows.
		\begin{itemize}
		\setlength{\itemsep}{1pt}
		\setlength{\parskip}{0pt}
		\setlength{\parsep}{0pt}
		\item[(1)] {\rm\bf  New method.}
		By introducing three different parameters, we induce a sketch-and-project method for the matrix equation \eqref{eq:1.1}. The iteration scheme is as follows:
		\begin{equation*}
			X^{k+1}=X^k-Z_1' \left(X^k -X^* \right)Z_2,
		\end{equation*}
		where $Z_1'=G^{-1}A^\top S\left( S^\top AG^{-1}A^\top S \right)^\dag S^\top A, \  Z_2=B P\left( P^\top B^\top BP \right)^\dag P^\top B^\top.$
		  $S\in\mathbb{R}{^{p\times \tau_1}}$,  $P\in\mathbb{R}{^{q\times \tau_2}}$ and $G\in\mathbb{R}{^{m\times m}}$  are three parameters. 
		\item[(2)] {\rm\bf  Complexity: general results.}  
		The convergence analysis of the proposed method is given, which is summarized in Table 1. In particular, we provide an explicit convergence rate $\rho$ for the exponential decay
		of the expected norm of the error (line 2 of Table 1)  and  the norm of the expected error of the iterates (line 3 of Table 1).  Furthermore, since $\rho$ is always bounded between 0 and 1, Theorem \ref{lem:7.2} provides a lower bound on $\rho$ that shows that the rate can potentially improve as the number $d$ increases.
		\begin{table}[!htbp]%
			\centering \renewcommand\arraystretch{1.4}
			\caption{Our main complexity results. }\label{tab:RKRGS}
			\begin{tabular}{|c|c|}
				\hline
				$\mathbf{E}\left[X^{k+1}-X^*\right]= \mathbf{E}\left[X^k -X^* \right]-\mathbf{E}\left[Z_1'(X^k -X^*)Z_2\right]$ &{\bf Theorem \ref{thm:2.1}}\\
				$\mathbf{E}\left[ \left\|X^{k+1}-X^*\right\|_{F(G)}^2 \right]\leq\rho  \mathbf{E}\left[\left\|X^{k}-X^*\right\|_{F(G)}^2 \right]$ &{\bf Theorem \ref{thm:2.1}}\\
				$    			\left\|\mathbf{E}\left[X^{k+1}-X^* \right]\right\|_{F(G)} \leq \rho  \left\|\mathbf{E}\left[X^{k}-X^*\right]\right\|_{F(G)}
				$ &{\bf Theorem \ref{thm:2}}\\
				$\mathbf{E}\left[ \left\|X^{k}-X^*\right\|_{F(G)}^2 \right] \leq \rho_{\sigma}\left\|X^{k}-X^*\right\|_{F(G)}^2.$&{\bf Theorem \ref{theB.2}}\\
							\hline
				\end{tabular}
			\begin{tablenotes}
				\centering 
			\footnotesize
			\item[*] The convergence rate is $\rho=1-\lambda_{\min}\left( \mathbf{E}\left[Z_2 \otimes Z_1'\right] \right), \rho_{\sigma}=1-\sigma^2_{\min}\left( \mathbf{E}\left[ Z_2 \otimes Z_1' \right] \right) <1$. 
		\end{tablenotes}
		\end{table}
		\item[(3)] {\rm \bf Complexity: Special cases.}  As a generalized iterative method,  the parameter random matrices $S$, $P$ and $G$ are given specific values, some well known methods are obtained. Two convergence theorems for the generalized method are explored. Besides these generic results, which hold without any major restriction on the sampling matrix $S,P$ (in particular, it can be either discrete or continuous), we give a specialized result applicable to discrete sampling matrices $S,P$ (see Theorem \ref{thm:2.2}). Our analysis recovers the existing rates (see Table \ref{table:6}).
		\begin{table}[!ht]
			\centering	\renewcommand\arraystretch{1.5}
			\resizebox{\textwidth}{!}{
				\begin{threeparttable}
					\caption{ Summary of convergence guarantees of various sampling strategies for the sketch-and-project method.}
					\label{table:6}
					\begin{tabular}{|c|c|c|c|c|}
						\hline
						Method & Sampling Strategy & Convergence Rate Bound & Rate Bound Derived From \\
						\hline
						GRK    &	$p_i=\frac{\left\|A_{i,:}\right\|^2_2}{\left\|A\right\|^2_F}, p_j=\frac{\left\|B_{:,j}\right\|^2_2}{\left\|B\right\|^2_F}$              &	$1-\frac{\lambda_{min}\left(A^{\top}A\right)\lambda_{min}\left(BB^{\top}\right)}{\left\|A\right\|^2_F\left\|B\right\|^2_F}$                    & 	 Theorem \ref{thm:2.1} or Theorem \ref{thm:2.2}     \\
						\hline
						RK-A   &$p_i=\frac{\left\|A_{i,:}\right\|^2_2}{\left\|A\right\|^2_F}$      &$1-\frac{\lambda_{min}\left(AA^{\top}\right)}{\left\|A\right\|^2_F}$  & Theorem \ref{thm:2.1} or Theorem \ref{thm:2.2} \\
						\hline
						RCD    &$p_j=\frac{\left\|A_{:,j}\right\|^2_2}{\left\|A\right\|^2_F}$                &$1-\frac{\lambda_{min}\left(AA^{\top}\right)}{\left\|A\right\|^2_F}$ & Theorem \ref{thm:2.1} or Theorem \ref{thm:2.2}  \\
						\hline
						GaussGRK  &Gaussian sampling   & $1-\frac{4}{\pi^2Tr\left(\Omega_1\right)Tr\left(\Omega_2\right)}\cdot \lambda_{min}\left(\Omega_2^{\top}\bigotimes \Omega_1\right)$         & Theorem \ref{thm:4.1}    \\
						\hline
						GaussRK-A &Gaussian sampling   & $1-\frac{2\lambda_{min}\left(\Omega_1\right)}{\pi Tr\left(\Omega_1\right)}$                &Theorem \ref{thm:4.1}                 \\
						\hline
					\end{tabular}
					\begin{tablenotes}
						\footnotesize
						\item[*]  $\Omega_1,\Omega_2$ are defined as the following described theorems.
					\end{tablenotes}
				\end{threeparttable}
			}
		\end{table}
		
		\item[(4)] {\rm \bf Application and Extension.} 
		 We apply our algorithms to real-world applications, such as the real-world sparse data and CT data. Gaussian global randomized Kaczmarz (GaussGRK) method shows some advantages in solving the matrix equation $AXB=C$. Meanwhile, based on our approach, many avenues for further development and research can be explored. For instance, it is possible to extend the results to the case that $S$ and $P$ are count sketch transforms. One also can design randomized iterative algorithms for finding the generalized inverse of a very large matrix and the solutions with special structures such as symmetric positive definite matrices.
		
	\end{itemize}
	
	The rest of this paper is organized as follows. In Section \ref{sec:2}, some notations and preliminaries are introduced. In Section \ref{sec:3}, we derive the generalized iterative method for solving matrix equation \eqref{eq:1.1}.
	After that, Convergence analysis is explored. Convergence rate, convergence conditions and a low bound of convergenc rate are obtained in Section \ref{sec:4}. 
	We recover several existing methods by selecting appropriate parameters $G$, $S$ and $P$. Meanwhile, all the associated complexity results will be summarized in the final theorems in Section \ref{sec:5}.
	In Section \ref{sec:6}, we shall describe variants of our method in the case when parameters $S$ and $P$ are Gaussian vectors and establish the convergence theorem.
	In Section \ref{sec:7}, some numerical examples are presented to verify the efficiency of the proposed method and compared the convergence rate of it with other existing methods.
	At the end, some conclusions are given in Section \ref{sec:6}.
	
	\section{Notation and preliminary}
	\label{sec:2}
	For any matrix $M\in\mathbb{R}{^{m\times n}}$, we use $M^\top$, $Range\left( M \right)$, $M_{ij}$, $\sigma_{\max}\left( M \right)$ and $\sigma_{\min}\left( M \right)$ to denote its transpose, column space, the $(i,j)$-th entry, the largest and smallest nonzero singular values, respectively. When the matrix $M$ is square, then $Tr(M)$ represents its trace. Define the Frobenius inner product $\langle A,B\rangle_F:=Tr\left( A^{\top}B\right)=Tr\left( AB^{\top}\right)$, where $A,B\in\mathbb{R}{^{m\times n}}$. Specially, we denote the Frobenius norm as $\left\| A \right\|_F^2=\left\langle A, A\right\rangle_F$.
	Let $\left\|X\right\|_{F(G)}^2=Tr\left(X^{\top}GX \right)$, where $G$ is a parameter matrix which is symmetric positive definite. When $G$ is an identity matrix, it holds that $\left\|A\right\|_{F(G)}^2=\left\| A \right\|_F^2$, $\left\|  A \right\| _G= \underset{\left\|x \right\|_G=1}{max}\left\|  Ax \right\|_G$.
	$\lambda_{min}(A)$ and $\lambda_{max}(A)$,  respectively, are the smallest and largest eigenvalues values of the matrix $A$. $A \succeq B$ indicates that $A-B$ is positive semi-definite.
	
	\begin{lemma}[\cite{SV2016}]\label{lem:1.1} For the Kronecker product, some well-known properties are summarized as follows.
		\begin{itemize}
			\item $\text{vec}\left(ABC\right)=\left(C^\top \otimes A\right)\text{vec}\left( B \right),$
			\item $\left(AC\right)\otimes \left(BD\right)=\left(A\otimes B\right)\left(C\otimes D\right),$
			\item $\left\| A\otimes B\right\|_F=\left\|A\right\|_F \cdot \left\|B\right\|_F,$
			\item $\left(A\otimes B\right)^{\top}=A^\top \otimes B^\top,$
			\item $\left(A\otimes B\right)^{-1}=A^{-1} \otimes B^{-1},$
			\item $\lambda \left( A\otimes B \right) = \left\{ \lambda_i \mu_j : \lambda_i \in \lambda \left( A \right), \mu_j \in \mu \left( B \right),i=1,2,...,n; j=1,2,...,m \right\},$
		\end{itemize}
		where $\lambda \left( A \right),\mu \left( B \right)$ denote spectrums and the matrices $A$, $B$, $C$ and $D$ have compatible dimensions.
	\end{lemma}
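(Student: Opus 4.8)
The plan is to prove the six identities in an order dictated by their logical dependencies, since the mixed-product rule (the second item) is the engine from which most of the others follow. To establish $(AC)\otimes(BD)=(A\otimes B)(C\otimes D)$ I would argue purely at the level of blocks. Writing $A\otimes B$ as the block matrix whose $(i,k)$ block equals $a_{ik}B$, the $(i,j)$ block of the product $(A\otimes B)(C\otimes D)$ is $\sum_k (a_{ik}B)(c_{kj}D)=(\sum_k a_{ik}c_{kj})BD=(AC)_{ij}BD$, which is precisely the $(i,j)$ block of $(AC)\otimes(BD)$. This one block identity is the backbone of the whole lemma.

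Three of the remaining items are then immediate. The transpose rule follows because transposing $A\otimes B$ carries the $(i,j)$ block $a_{ij}B$ to the $(j,i)$ block $a_{ij}B^\top$, which is exactly the $(j,i)$ block of $A^\top\otimes B^\top$. The Frobenius identity follows by factoring the double sum, $\|A\otimes B\|_F^2=\sum_{i,j,k,l}(a_{ij}b_{kl})^2=(\sum_{i,j}a_{ij}^2)(\sum_{k,l}b_{kl}^2)=\|A\|_F^2\|B\|_F^2$. The inverse rule is a one-line consequence of the mixed-product rule: once one notes $I_m\otimes I_p=I_{mp}$, one has $(A\otimes B)(A^{-1}\otimes B^{-1})=(AA^{-1})\otimes(BB^{-1})=I$, so $A^{-1}\otimes B^{-1}$ is the inverse.

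For the vectorization identity I would expand $B=\sum_{k,l}b_{kl}e_k e_l^\top$ so that $ABC=\sum_{k,l}b_{kl}(Ae_k)(e_l^\top C)$, and then apply the elementary rule $\text{vec}(uv^\top)=v\otimes u$ together with the mixed-product rule: $\text{vec}((Ae_k)(C^\top e_l)^\top)=(C^\top e_l)\otimes(Ae_k)=(C^\top\otimes A)(e_l\otimes e_k)$. Summing over $k,l$ and using $\sum_{k,l}b_{kl}(e_l\otimes e_k)=\text{vec}(B)$ yields $\text{vec}(ABC)=(C^\top\otimes A)\text{vec}(B)$.

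The spectrum identity is where the real work lies, and I expect it to be the main obstacle. Feeding eigenpairs $Ax=\lambda x$, $By=\mu y$ into the mixed-product rule gives $(A\otimes B)(x\otimes y)=\lambda\mu(x\otimes y)$, which only shows the containment $\{\lambda_i\mu_j\}\subseteq\lambda(A\otimes B)$; it says nothing about whether these values exhaust the spectrum with the correct multiplicities. To close this gap I would invoke Schur triangularizations $A=U_1T_1U_1^*$ and $B=U_2T_2U_2^*$ with $U_1,U_2$ unitary and $T_1,T_2$ upper triangular. The mixed-product and conjugate-transpose rules already proved give $A\otimes B=(U_1\otimes U_2)(T_1\otimes T_2)(U_1\otimes U_2)^*$, where $U_1\otimes U_2$ is unitary and $T_1\otimes T_2$ is upper triangular with diagonal entries exactly the products $\lambda_i\mu_j$. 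Since a unitary similarity preserves the spectrum and an upper-triangular matrix displays its eigenvalues on the diagonal, this identifies the full spectrum of $A\otimes B$ as $\{\lambda_i\mu_j\}$ and settles the multiplicity question that the eigenvector argument alone leaves open.
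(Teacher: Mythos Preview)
Your proof is correct and complete. The paper itself does not supply any proof of this lemma: it is stated with a citation to \cite{SV2016} as background material and used without argument. Your block-wise derivation of the mixed-product rule, and the subsequent deductions of the transpose, Frobenius, inverse, and vectorization identities from it, are the standard textbook arguments. Your handling of the spectrum identity via Schur triangularization is also the usual route and correctly settles both containment and multiplicity, which the naive eigenvector argument alone would leave open. Since the paper offers nothing to compare against, there is no divergence of method to discuss; your write-up would serve perfectly well as a self-contained proof of the cited facts.
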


	\begin{lemma}[\cite{Graham1981, SGS2022}] \label{lem:1.2} If $A,B,C$ and $X$ are four real matrices of compatible sizes, we have
		\begin{itemize}
			\item $\frac{\partial}{\partial X}Tr\left(AXB\right)=A^{\top}B^{\top}, $
			\item $\frac{\partial}{\partial X}Tr\left(AX^{\top}B\right)=BA, $
			\item $\frac{\partial}{\partial X}Tr\left(X^{\top}X\right)=\frac{\partial}{\partial X}Tr\left(XX^{\top}\right)=2X, $
			\item $\frac{\partial}{\partial X}Tr\left(X^{\top}AXB\right)=AXB+A^{\top}XB^{\top}, $
			\item $\frac{\partial}{\partial X}Tr\left\{\left(AXB+C\right)\left(AXB+C\right)^{\top}\right\}=2A^{\top}\left(AXB+C\right)B^{\top}.$
		\end{itemize}
	\end{lemma}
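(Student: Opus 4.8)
The plan is to prove each identity by elementary entrywise differentiation, using the convention that $\frac{\partial}{\partial X} f$ denotes the matrix whose $(i,j)$ entry is $\frac{\partial f}{\partial X_{ij}}$, together with the basic rule $\frac{\partial X_{ij}}{\partial X_{kl}}=\delta_{ik}\delta_{jl}$. Writing each trace explicitly as a sum over indices, e.g. $Tr\left(AXB\right)=\sum_{i,j,k}A_{ij}X_{jk}B_{ki}$, and differentiating with respect to a generic entry $X_{lm}$ collapses the sums through the Kronecker deltas and leaves, after relabeling, the $(l,m)$ entry of the claimed matrix. The first three identities follow immediately in this way: for the first one the computation yields $\left(BA\right)_{ml}=\left(A^\top B^\top\right)_{lm}$; the second is the same computation with the roles of the two free indices swapped because of the transpose; and the third reduces to $Tr\left(X^\top X\right)=\sum_{i,j}X_{ij}^2$, whose derivative is plainly $2X_{lm}$.

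For the fourth identity I would exploit the product rule, since $X$ occurs twice in $Tr\left(X^\top A X B\right)$. Freezing in turn each of the two occurrences and differentiating with respect to the other, I can reduce the problem to the already-established identities. Holding the second factor fixed gives a term of the form $\frac{\partial}{\partial X}Tr\left(X^\top M\right)=M$ with $M=AXB$; holding the first factor fixed gives $\frac{\partial}{\partial X}Tr\left(PXB\right)$ with $P=X^\top A$, which by the first identity equals $P^\top B^\top=A^\top X B^\top$. Adding the two contributions produces $AXB+A^\top X B^\top$. The fifth identity is then a short chain-rule computation: setting $R=AXB+C$ and using $Tr\left(RR^\top\right)=\left\|R\right\|_F^2$, the differential is $2\,Tr\left(R^\top A\,(dX)\,B\right)=2\,Tr\left(BR^\top A\,dX\right)$, and reading off the coefficient of $dX$ (and transposing per the derivative convention) gives $2A^\top\left(AXB+C\right)B^\top$.

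The computations here are entirely routine, so there is no genuine obstacle; the only points requiring care are bookkeeping ones. First, one must be consistent about whether $\frac{\partial}{\partial X}f=N$ or $N^\top$ when the differential is written as $df=Tr\left(N\,dX\right)$, since the convention fixes a transpose that propagates through every identity. Second, in the fourth and fifth identities the double appearance of $X$ must be handled by a genuine product rule rather than by treating one copy as constant, and it is the symmetrization implicit in this step that generates the second summand $A^\top X B^\top$. An equally clean alternative would be to work entirely with matrix differentials, using $d\,Tr\left(F\right)=Tr\left(dF\right)$, the Leibniz rule, and the identification $df=Tr\left(N\,dX\right)\Rightarrow \frac{\partial}{\partial X}f=N^\top$; this derives all five formulas uniformly and makes the transpose convention explicit throughout.
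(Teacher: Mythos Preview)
Your argument is correct: each identity can indeed be verified by the entrywise computations and product/chain rules you outline, and the bookkeeping with the convention $df=Tr\left(N\,dX\right)\Rightarrow \partial f/\partial X=N^\top$ is handled consistently.

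Note, however, that the paper does not supply its own proof of this lemma at all. Lemma~\ref{lem:1.2} is stated as a citation of standard matrix-calculus references (Graham 1981; Shafiei--Hajarian 2022) and is used as a toolbox result in deriving the iteration \eqref{eq:2.1.2}. So there is nothing to compare your approach against: you have written out an elementary self-contained verification of facts the paper simply quotes. Your differential-based alternative in the last paragraph is in fact the cleanest way to present these identities if a proof were required, and it matches exactly how such formulas are derived in the cited sources.
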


	


	\section{A Sketch-and-project Kaczmarz iterative method}
	\label{sec:3}
	 To solve the problem \eqref{eq:1.1},  starting from $X^{k}$ our method draws random
	matrices $S, P$ and uses them to generate a new point $X^{k+1}$. This iteration can be
	formulated in two seemingly different but equivalent ways (see Fig. \ref{figure:0}).
	\subsection{Two  formulations}
	\begin{itemize}
		\item {\bf Projection viewpoint: sketch-and-project.} $X^{k+1}$ is the nearest point to $X^k$ which solves a sketched version of the original linear system:
		\begin{equation}\label{eq:1.3}
			X^{k+1} = \arg\min\limits_{X \in\mathbb{R}{^{m\times n}}} \frac{1}{2}\left\| X-X^k\right\|_{F(G)}^2 \ \  \text{subject to} \ \ S^\top AXBP=S^\top CP,
		\end{equation}
		where  $S\in\mathbb{R}{^{p\times \tau_1}}$ and $P\in\mathbb{R}{^{q\times \tau_2}}$ are two parameters, each of them is drawn in an independent and identically distributed fashion at each iteration. We do not restrict the number of columns of $S$ and $P$, hence $\tau_1$ and $\tau_2$ are two random variables.
		\item {\bf Optimization viewpoint: constrain-and-approximate.}  The solution set of the random sketched equation contains all solutions of the original system. However, there are many solutions, so we have to define a method to select one of them. From the optimization viewpoint, we know that $X^{k+1}$ is the best approximation of $X^*$ in a random space passing through $X^k$. That is, we choose an affine space randomly which contains $X^k$ and constrain our method to choose the next iterate from this space. That is to say, consider the following problem
 \begin{equation}\label{eq:1.4}
			\begin{split}
				&X^{k+1} = \arg\min\limits_{X \in\mathbb{R}{^{m\times n}}}\frac{1}{2} \left\| X-X^*\right\|_{F(G)}^2,\\
				&\text{subject to} \ \ X=X^k+ G^{-1}A^{\top}SYP^{\top}B^{\top},\ Y {\rm \ is \ free.}
			\end{split}
		\end{equation}
 Then we pick $X^{k+1}$ as the point which best approximates $X^*$ on this space.
		
		\begin{figure}[htbp]
			\centering
			{
				\includegraphics[width=8cm]{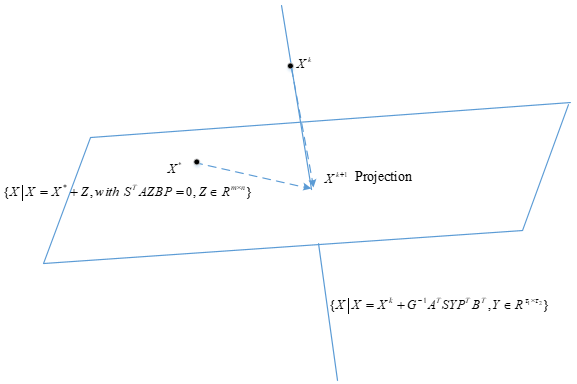}
			} \\
			\caption{The geometry of our algorithm. The next iterate, $X^{k+1}$,
				is obtained by projecting $X^{k}$ onto the affine space formed by intersecting $\{X|X=X^*+Z, \ with\  S^{\top}AZBP=0, Z\in R^{m \times n}\}$
				(see \eqref{eq:1.3}) and $\{X|X=X^k+G^{-1}A^{\top}SYP^{\top}B^{\top},\ Y\in R^{\tau_{1} \times \tau_{2}}\}$ (see \eqref{eq:1.4}).}\label{figure:0}
			%
	\end{figure}
\end{itemize}

	\subsection{Stochastic iterative algorithm}
		 Now we deduce the iterative scheme for the problem \eqref{eq:1.1}.  Based on the Lagrangian function of the problem \ref{eq:1.3}, we have
	\begin{equation}\label{eq:2.1.1}
		\begin{split}
			\mathcal{L}\left(X,Y\right) & = \frac{1}{2}\left\|X-X^k\right\|_{F(G)}^2+\left\langle Y,S^{\top}AXBP-S^{\top}CP \right\rangle \\
			& = \frac{1}{2}Tr\left( \left(X-X^k\right)^{\top}G\left( X-X^k\right) \right)+Tr\left( Y^{\top}\left(S^{\top}AXBP-S^{\top}CP\right)\right),
		\end{split}
	\end{equation}
	where $Y$ is a Lagrangian multiplier. By using \ref{lem:1.2}, we take the gradient of $\mathcal{L}\left(X,Y\right)$ and equate its components to zero for finding the stationary matrix:
	\begin{eqnarray*}
		\nabla_X \mathcal{L}\left(X,Y\right) |_{X^{k+1}}&=& \frac{1}{2}\left(G+G^{\top} \right)\left( X^{k+1}-X^k\right)+A^{\top}SYP^{\top}B^{\top}=0, \\
		\nabla_Y \mathcal{L}\left(X,Y\right) |_{X^{k+1}} &=& S^{\top}AX^{k+1}BP-S^{\top}CP=0.
	\end{eqnarray*}
	Since $G$ is symmetric positive definite, then we have
	\begin{equation*}
		S^{\top}AX^{k}BP-S^{\top}AG^{-1}A^{\top}SYP^{\top}B^{\top}BP=S^{\top}CP,
	\end{equation*}
	\begin{equation*}
		Y=\left(S^{\top}AG^{-1}A^{\top}S \right)^{\dag}S^{\top}\left(AX^{k}B-C\right)P\left(P^{\top}B^{\top}BP\right)^{\dag},
	\end{equation*}
	thus, the iteration form is as follows
	\begin{equation}\label{eq:2.1.2}
		\begin{split}
			X^{k+1}= & X^{k}-G^{-1}A^{\top}S\left(S^{\top}AG^{-1}A^{\top}S \right)^{\dag}
			 S^{\top}\left(AX^{k}B-C\right)P\left(P^{\top}B^{\top}BP\right)^{\dag}P^{\top}B^{\top}.
		\end{split}
	\end{equation}
	Let$$Z_1'=G^{-1}A^\top S\left( S^\top AG^{-1}A^\top S \right)^\dag S^\top A,\ Z_2=B P\left( P^\top B^\top BP \right)^\dag P^\top B^\top, $$ the above scheme becomes
	\begin{equation}\label{eq:2.1.3}
		X^{k+1} =  X^{k}- Z_1'\left(X^{k}-X^*\right)Z_2.
	\end{equation}
Therefore, the sketch-and-project method is obtained.

	Especially, when $\left\|X\right\|_{F(G)}^2=\left\| X \right\|_F^2$, i.e., the problem
\begin{equation*}
	X^{k+1} = \arg\min\limits_{X \in\mathbb{R}{^{m\times n}}}\frac{1}{2} \left\| X-X^*\right\|_F^2.
		\end{equation*}
	With a similar process, we have
	\begin{equation}\label{eq:2.1.4}
		\begin{split}
		X^{k+1}=&X^{k}-A^{\top}S\left(S^{\top}AA^{\top}S \right)^{\dag}
		S^{\top}\left(AX^{k}B-C\right)P\left(P^{\top}B^{\top}BP\right)^{\dag}P^{\top}B^{\top}\\
		=&X^k-Z_1\left(X^k-X^{*}\right)Z_2,
	\end{split}
	\end{equation}
	where $Z_1=A^\top S\left( S^\top AA^\top S \right)^\dag S^\top A,\ Z_2=B P\left( P^\top B^\top BP \right)^\dag P^\top B^\top.$
	
	\begin{algorithm}
		\caption{Stochastic Iterative Method for Matrix Equations $AXB=C$}\label{algorithm1}
		\begin{algorithmic}[1]
			\State \textbf{Input:} $A\in R^{p\times m},\ B\in R^{n\times q}$, $C\in R^{p\times q}$ and the positive definite matrix $G \in \mathbb{R}^{m\times m}$\State \textbf{Initialize:} arbitrary square matrix $X_0\in\mathbb{R}^{m\times n}$
			\For{$k=1,2,\cdots, $}
			\State Sample parameters: $P,\ S$ are distribution over random matrices;
			\State Compute $T_1=G^{-1}A^\top S\left( S^\top AG^{-1}A^\top S \right)^\dag S^\top,\ T_2=P\left( P^\top B^\top BP \right)^\dag P^\top B^\top$
		\State $X^{k+1} = X^{k}-T_1(C-AX^{k}B)T_2$
			\EndFor
			\State \textbf{Output: }last iterate $X^{k}$
		\end{algorithmic}
	\end{algorithm}

Recall that $S \in \mathbb{R}^{q\times \tau_1}, P\in \mathbb{R}^{q \times \tau_2}$ (with $\tau_1, \tau_2$ possibly being random) and $A \in \mathbb{R}^{q\times m}, B\in \mathbb{R}^{n \times q}, G \in \mathbb{R}^{m\times m}$. Let us define the random quantity $$d = rank\left((P^{\top}B^{\top}) \otimes (S^{\top}A)\right),$$
and notice that $d \leq \min\left\{\tau_1\tau_2, mn\right\}$, we have
$$ dim\left(Range\left((BP) \otimes (G^{-1}A^{\top}S)\right)\right) = d,  dim\left(Null\left((P^{\top}B^{\top}) \otimes (S^{\top}A)\right)\right)= mn-d.$$

\begin{lemma}\label{lem:7.1}
	With respect to the geometry induced by the $\left(I \otimes G\right)$-inner product, we have that
	\begin{enumerate}
		\item $Z_2 \otimes Z_1'$ projects orthogonally onto $d$-dimensional subspace $Range\left((BP) \otimes (G^{-1}A^{\top}S)\right).$
		\item $\left(I-Z_2 \otimes Z_1'\right)$ projects orthogonally onto $\left(mn-d\right)$-dimensional subspace $Null\left((P^{\top}B^{\top}) \otimes (S^{\top}A)\right).$
	\end{enumerate}
\end{lemma}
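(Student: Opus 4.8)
The plan is to recognize $Z_1'$ and $Z_2$ as orthogonal projections in suitable inner products and then show that their Kronecker product inherits the projection property with respect to the $(I\otimes G)$-inner product. First I would abbreviate $W=A^\top S$ and $V=BP$, so that $Z_1'=G^{-1}W\left(W^\top G^{-1}W\right)^\dag W^\top$ and $Z_2=V\left(V^\top V\right)^\dag V^\top$. Using the pseudoinverse identity $M^\dag MM^\dag=M^\dag$ and the mixed product rule from Lemma \ref{lem:1.1}, a direct computation gives $Z_1'^2=Z_1'$ and $Z_2^2=Z_2$; moreover $Z_2^\top=Z_2$, and since $W^\top G^{-1}W$ is symmetric (so its pseudoinverse is symmetric as well) one checks $GZ_1'=W\left(W^\top G^{-1}W\right)^\dag W^\top=Z_1'^\top G$. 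Hence $Z_1'$ is the $G$-orthogonal projection onto $Range\left(G^{-1}A^\top S\right)$ and $Z_2$ is the Euclidean orthogonal projection onto $Range\left(BP\right)$.

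The second step is to transfer these two facts to the Kronecker product. I would verify that $Z_2\otimes Z_1'$ is an $(I\otimes G)$-orthogonal projection by checking its two defining properties. Idempotency follows from the mixed product rule, $\left(Z_2\otimes Z_1'\right)^2=Z_2^2\otimes Z_1'^2=Z_2\otimes Z_1'$. Self-adjointness in the $(I\otimes G)$-inner product amounts to the identity $\left(I\otimes G\right)\left(Z_2\otimes Z_1'\right)=\left(Z_2\otimes Z_1'\right)^\top\left(I\otimes G\right)$; expanding both sides with the mixed product and transpose rules reduces it to $Z_2^\top=Z_2$ together with $GZ_1'=Z_1'^\top G$, both already established. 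It then remains to identify the range: since $Range\left(Z_1'\right)=Range\left(G^{-1}A^\top S\right)$ and $Range\left(Z_2\right)=Range\left(BP\right)$, the Kronecker structure gives $Range\left(Z_2\otimes Z_1'\right)=Range\left((BP)\otimes(G^{-1}A^\top S)\right)$, whose dimension is $rank\left(BP\right)\cdot rank\left(A^\top S\right)$. This equals $rank\left((P^\top B^\top)\otimes(S^\top A)\right)=d$ because $G^{-1}$ is invertible, which proves part (1).

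For part (2) I would use that the complement of an orthogonal projection is again an orthogonal projection: $I-Z_2\otimes Z_1'$ is the $(I\otimes G)$-orthogonal projection onto the $(I\otimes G)$-orthogonal complement of $Range\left((BP)\otimes(G^{-1}A^\top S)\right)$, a subspace of dimension $mn-d$. The key computation is to identify this complement: it equals $Null\left(((BP)\otimes(G^{-1}A^\top S))^\top(I\otimes G)\right)$, and applying the transpose and mixed product rules collapses the inner matrix to $(P^\top B^\top)\otimes(S^\top AG^{-1}G)=(P^\top B^\top)\otimes(S^\top A)$, so the complement is exactly $Null\left((P^\top B^\top)\otimes(S^\top A)\right)$, giving part (2).

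I expect the main obstacle to be the bookkeeping in the $(I\otimes G)$-geometry rather than any deep difficulty: one must consistently track that the correct inner product is $I\otimes G$ (not $G\otimes I$ or the identity), verify the self-adjointness identity through the noncommuting factors $G$ and the pseudoinverses, and be careful that the orthogonal complement in part (2) is taken in the weighted inner product so that the factor $G$ cancels against the $G^{-1}$ inside $Z_1'$. The dimension count $rank\left((P^\top B^\top)\otimes(S^\top A)\right)=rank\left(BP\right)\,rank\left(A^\top S\right)$ should be stated explicitly, as it is what ties the geometric description back to the definition of $d$.
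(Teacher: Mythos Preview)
Your proposal is correct and close in spirit to the paper's argument, but the organization differs in one place worth noting. You establish that $Z_2\otimes Z_1'$ is an $(I\otimes G)$-orthogonal projection by first proving self-adjointness, namely $(I\otimes G)(Z_2\otimes Z_1')=(Z_2\otimes Z_1')^\top(I\otimes G)$, which you reduce to $Z_2^\top=Z_2$ and $GZ_1'=Z_1'^\top G$; then you identify the range and obtain part (2) by passing to the $(I\otimes G)$-orthogonal complement. The paper instead checks idempotency and then verifies the two complementary conditions directly: that $Z_2\otimes Z_1'$ fixes $Range\big((BP)\otimes(G^{-1}A^\top S)\big)$ (via the substitution $M=G^{-1/2}A^\top S$ and the identities $(M^\top M)^\dag M^\top=M^\dag$, $MM^\dag M=M$) and that it annihilates $Null\big((P^\top B^\top)\otimes(S^\top A)\big)$ by factoring the Kronecker product. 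Your route has the advantage of making the ``orthogonal with respect to $(I\otimes G)$'' claim explicit through self-adjointness, which the paper's proof leaves somewhat implicit; the paper's route avoids the abstract orthogonal-complement computation by exhibiting the action on both subspaces concretely. Either way the ingredients are the same pseudoinverse identities and Kronecker rules, so the two arguments are essentially equivalent reorganizations of one another.
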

\begin{proof}
	See Appendix \ref{appendix1} for more details.
\end{proof}

\begin{lemma}\label{lem:7.1'} Let $\widetilde{Z_1}=GZ_1'$  and $\hat{Z_1}=G^{-1/2}\widetilde{Z_1}G^{-1/2}$ . For $Z_1,\ Z_1'$, $\widetilde{Z_1}$, $\hat{Z_1}$ and $Z_2$ ,  there exist the following relations.
	\begin{enumerate}
		\item $(Z_1')^2=Z_1'$,$Z_1=Z_1^{2},\ Z_1^{T}=Z_1$ and $Z_2=Z_2^{2},\ Z_2^{T}=Z_2$.
		\item $\widetilde{Z_1}=\widetilde{Z_1}G^{-1}\widetilde{Z_1}$,  $\widetilde{Z_1}^{T}=\widetilde{Z_1}$, $\hat{Z_1}=\hat{Z_1}^T$ and  $\hat{Z_1}^2=\hat{Z_1}$.
	\end{enumerate}
\end{lemma}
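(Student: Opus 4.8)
The plan is to reduce every identity to a single defining property of the Moore--Penrose pseudoinverse, namely $M^\dag M M^\dag = M^\dag$, together with the fact that the pseudoinverse of a symmetric matrix is again symmetric (since $(M^\dag)^\top = (M^\top)^\dag$). To make the structure transparent I would first introduce the shorthand $W = A^\top S$ and $N = BP$, so that the three basic operators read $Z_1 = W(W^\top W)^\dag W^\top$, $Z_1' = G^{-1}W(W^\top G^{-1}W)^\dag W^\top$, and $Z_2 = N(N^\top N)^\dag N^\top$. In this notation $\widetilde{Z_1} = GZ_1' = W(W^\top G^{-1}W)^\dag W^\top$ and $\hat{Z_1} = G^{-1/2}\widetilde{Z_1}G^{-1/2}$, and the one-sided factor $G^{-1}$ carried by $Z_1'$ becomes visually isolated, which is exactly the feature that controls which symmetry statements do and do not hold.

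For part (1), the claims about $Z_1$ and $Z_2$ are the standard statement that $M(M^\top M)^\dag M^\top$ is the orthogonal projector onto $\mathrm{Range}(M)$: symmetry holds because $M^\top M$ is symmetric, hence $(M^\top M)^\dag$ is symmetric; idempotency follows by inserting the pseudoinverse relation $(M^\top M)^\dag (M^\top M)(M^\top M)^\dag = (M^\top M)^\dag$ into the middle of $M(\cdot)M^\top$. The only case needing extra care is $(Z_1')^2 = Z_1'$, since $Z_1'$ is not symmetric. Here I would expand $(Z_1')^2 = G^{-1}W(W^\top G^{-1}W)^\dag W^\top G^{-1} W(W^\top G^{-1}W)^\dag W^\top$, recognize the central symmetric kernel $M := W^\top G^{-1}W$, and apply $M^\dag M M^\dag = M^\dag$ to collapse the product back to $Z_1'$.

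For part (2), I would work throughout with the same $M = W^\top G^{-1}W$, symmetric because $G^{-1}$ is. The relation $\widetilde{Z_1} = \widetilde{Z_1}G^{-1}\widetilde{Z_1}$ is once more $M^\dag M M^\dag = M^\dag$ applied to $\widetilde{Z_1} = W M^\dag W^\top$, and $\widetilde{Z_1}^\top = \widetilde{Z_1}$ follows from the symmetry of $M^\dag$. These two facts then feed directly into the statements about $\hat{Z_1}$: symmetry of $\hat{Z_1} = G^{-1/2}\widetilde{Z_1}G^{-1/2}$ follows from symmetry of $\widetilde{Z_1}$ and of $G^{-1/2}$, while idempotency follows by writing $\hat{Z_1}^2 = G^{-1/2}\widetilde{Z_1}G^{-1}\widetilde{Z_1}G^{-1/2}$ and substituting the just-established $\widetilde{Z_1}G^{-1}\widetilde{Z_1} = \widetilde{Z_1}$.

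I do not expect a genuine obstacle, as all six identities are algebraic consequences of one pseudoinverse rule applied to the appropriate symmetric kernel ($W^\top W$ or $W^\top G^{-1}W$). The only point requiring attention is bookkeeping: one must keep track of the fact that $Z_1'$ itself is \emph{not} symmetric, so that it is $\hat{Z_1}$ — the symmetric congruence transform $G^{-1/2}\widetilde{Z_1}G^{-1/2}$ — rather than $Z_1'$ that becomes an orthogonal projector, consistent with the $\left(I\otimes G\right)$-geometry used in Lemma \ref{lem:7.1}.
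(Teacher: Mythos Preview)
Your proposal is correct and is precisely the direct verification the paper has in mind: the paper's proof of this lemma is the single sentence ``We can verify them directly,'' and in the proof of Lemma~\ref{lem:7.1} it uses exactly the same pseudoinverse identity $M^{\dagger}MM^{\dagger}=M^{\dagger}$ (with $M=S^{\top}AG^{-1}A^{\top}S$) that you isolate. Your write-up simply makes explicit the bookkeeping the paper omits.
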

\begin{proof}
	We can verify them directly.
\end{proof}	
	
\section{Convergence analysis}	
	\label{sec:4}
Hereunder, we detail the convergence analysis for the scheme \ref{algorithm1}. From Lemma \ref{lem:7.1'}, it is easy to obtain the following relation.
\begin{equation}\label{rho}
	\begin{split}
		\rho & = 1- \lambda_{\min}\left(\mathbf{E}\left[Z_2 \otimes \hat{Z_1}\right]\right) \\
		& = 1- \lambda_{\min}\left(\left(I \otimes G^{-\frac{1}{2}}\right)\mathbf{E}\left[Z_2 \otimes \widetilde{Z_1}\right]\left(I \otimes G^{-\frac{1}{2}}\right)\right)  \\
		& = 1- \lambda_{\min}\left(\left(I \otimes G^{-1}\right)\mathbf{E}\left[Z_2 \otimes \widetilde{Z_1}\right]\right)  \\
		& = 1- \lambda_{\min}\left(\mathbf{E}\left[Z_2 \otimes Z_1'\right]\right).
	\end{split}
\end{equation}
Our convergence theorems depend on the above convergence rate $\rho$.

%
%
%
%
%
%
%

\subsection{Convergence theorem}

\begin{theorem}\label{thm:2.1}
	For every $X^*\in \mathbb{R}{^{m\times n}}$ satisfying $AX^*B=C$, we have
	\begin{equation*}
		\mathbf{E}\left[ \left\|X^{k}-X^*\right\|_{F(G)}^2 \right] \leq \rho^k \left\|X^{0}-X^*\right\|_{F(G)}^2,
	\end{equation*}
	where $\rho=1- \lambda_{\min}\left( \mathbf{E}\left[ Z_2 \otimes Z_1' \right] \right)$.
	Therefore, the iteration sequence generated by \eqref{eq:2.1.3} converges to $X^*$ if $0\leq\rho<1$.
\end{theorem}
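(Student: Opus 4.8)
The plan is to pass to vectorized form, recognize the one-step error operator as a $W$-orthogonal projection with $W := I\otimes G$, and then exploit the Pythagorean identity for such projections to extract a one-step contraction in expectation, from which the geometric decay follows by induction.

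First I would set $R^k := X^k-X^*$, so that \eqref{eq:2.1.3} reads $R^{k+1}=R^k-Z_1'R^kZ_2$ (the $X^*$ already sitting inside the update makes this exact, since $AX^kB-C=AR^kB$). Vectorizing with the identity $vec(Z_1'R^kZ_2)=(Z_2^\top\otimes Z_1')vec(R^k)$ from Lemma~\ref{lem:1.1} and using $Z_2^\top=Z_2$ from Lemma~\ref{lem:7.1'}, the recursion becomes $r^{k+1}=(I-M)r^k$, where $r^k:=vec(R^k)$ and $M:=Z_2\otimes Z_1'$. In these coordinates the relevant norm is $\left\|R^k\right\|_{F(G)}^2=Tr\big((R^k)^\top G R^k\big)=(r^k)^\top W r^k=:\left\|r^k\right\|_W^2$, since $W=I\otimes G$ is symmetric positive definite.

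By Lemma~\ref{lem:7.1}, $M$ is the $W$-orthogonal projection onto $Range\big((BP)\otimes(G^{-1}A^\top S)\big)$, so $I-M$ is the complementary $W$-orthogonal projection. The key algebraic input is the $W$-self-adjointness $W(I-M)=(I-M)^\top W$ (equivalently, $WM$ is symmetric, which holds because $WM=Z_2\otimes\widetilde{Z_1}$ with both factors symmetric by Lemma~\ref{lem:7.1'}) together with idempotency. These give $(I-M)^\top W(I-M)=W(I-M)$, hence for each realization of $S,P$,
\begin{equation*}
  \left\|r^{k+1}\right\|_W^2=(r^k)^\top W(I-M)r^k=\left\|r^k\right\|_W^2-(r^k)^\top WM\,r^k.
\end{equation*}
Taking the conditional expectation over the fresh draw $S,P$ (independent of the history that determines $r^k$) turns the last term into $(r^k)^\top\mathbf{E}[WM]r^k$.

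The one step I expect to be the genuine obstacle is bounding this quadratic term, because $M$ is only $W$-orthogonal, not Euclidean-orthogonal ($Z_1'$ is idempotent but not symmetric), so $\lambda_{\min}$ cannot be applied to $\mathbf{E}[M]$ directly. To repair this I would symmetrize via the substitution $y:=(I\otimes G^{1/2})r^k$, under which $\left\|r^k\right\|_W^2=\|y\|^2$ and $(r^k)^\top\mathbf{E}[WM]r^k=(r^k)^\top\mathbf{E}[Z_2\otimes\widetilde{Z_1}]r^k=y^\top\mathbf{E}[Z_2\otimes\hat{Z_1}]y$, using $G^{-1/2}\widetilde{Z_1}G^{-1/2}=\hat{Z_1}$. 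Since $Z_2$ and $\hat{Z_1}$ are symmetric idempotents (Lemma~\ref{lem:7.1'}), $\mathbf{E}[Z_2\otimes\hat{Z_1}]$ is symmetric positive semidefinite, so $y^\top\mathbf{E}[Z_2\otimes\hat{Z_1}]y\ge\lambda_{\min}\big(\mathbf{E}[Z_2\otimes\hat{Z_1}]\big)\|y\|^2$; by the chain of equalities in \eqref{rho} this minimal eigenvalue equals $\lambda_{\min}(\mathbf{E}[Z_2\otimes Z_1'])=1-\rho$. Assembling the pieces yields $\mathbf{E}\big[\left\|r^{k+1}\right\|_W^2\mid r^k\big]\le\rho\left\|r^k\right\|_W^2$; taking total expectations and iterating over $k$ gives $\mathbf{E}\big[\left\|r^k\right\|_W^2\big]\le\rho^k\left\|r^0\right\|_W^2$, which is exactly the asserted bound. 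Everything outside the symmetrization is routine projection bookkeeping once Lemmas~\ref{lem:7.1} and~\ref{lem:7.1'} are in hand.
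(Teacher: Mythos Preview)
Your proposal is correct and follows essentially the same approach as the paper: both derive the Pythagorean identity $\left\|R^{k+1}\right\|_{F(G)}^2=\left\|R^{k}\right\|_{F(G)}^2-(r^k)^\top(Z_2\otimes\widetilde{Z_1})r^k$, symmetrize via $I\otimes G^{1/2}$ to pass to $\mathbf{E}[Z_2\otimes\hat{Z_1}]$, apply the Rayleigh-quotient bound, and invoke \eqref{rho}. The only difference is presentational---you vectorize immediately and obtain the Pythagorean step from the abstract $W$-projection properties in Lemma~\ref{lem:7.1}, whereas the paper first expands the trace in matrix form and cancels cross terms by hand using Lemma~\ref{lem:7.1'} before vectorizing.
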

\begin{proof}
	The iteration sequence \eqref{eq:2.1.3} can be rewritten as a simple fixed point formula
	\begin{equation}\label{eq:22}
		X^{k+1}-X^*=X^k-X^*-Z_1' \left(X^k -X^* \right)Z_2.
	\end{equation}
	According to the definition of Frobenius norm, we have
	\begin{equation}\label{eq:2.3}
		\begin{split}
			\left\|X^{k+1}-X^*\right\|_{F(G)}^2 = & \left\|X^{k}-X^*\right\|_{F(G)}^2  -Tr\left( \left(X^{k}-X^*\right)^\top G Z_1'\left(X^{k}-X^*\right)Z_2\right)\\
			&-Tr\left( Z_2^\top\left(X^{k}-X^*\right)^\top (Z_1')^\top G\left(X^{k}-X^*\right)\right)+\left\|Z_1'\left(X^{k}-X^*\right)Z_2\right\|_{F(G)}^2.
		\end{split}
	\end{equation}
	With Lemma \ref{lem:7.1'}, $Tr(MN)=Tr(NM)$ and $G^{\top}=G$, we can get
	\begin{equation*}
		\begin{split}
			\left\|Z_1'\left(X^{k}-X^*\right)Z_2\right\|_{F(G)}^2
			& = Tr\left( Z_2^{\top}\left(X^{k}-X^*\right)^{\top}Z_1'^{\top}G Z_1'\left(X^{k}-X^*\right)Z_2\right) \\
			& = Tr \left( Z_2^{\top}\left(X^{k}-X^*\right)^{\top} Z_1'^{\top}\widetilde{Z_1}\left(X^{k}-X^*\right)Z_2\right) \\
			& = Tr \left( \left(X^{k}-X^*\right)^{\top}\widetilde{Z_1}\left(X^{k}-X^*\right)Z_2 \right).
		\end{split}
	\end{equation*}
	Using the fact $Tr(M^\top N)=Tr(N^\top M)$, we have
	\begin{equation*}
		Tr\left( \left(X^{k}-X^*\right)^\top G Z_1'\left(X^{k}-X^*\right)Z_2\right)=Tr\left( \left(X^{k}-X^*\right)^\top \widetilde{Z_1}\left(X^{k}-X^*\right)Z_2\right),
	\end{equation*}
	\begin{equation*}
		\begin{split}
			Tr\left( Z_2^\top\left(X^{k}-X^*\right)^\top (Z_1')^\top G\left(X^{k}-X^*\right)\right)=Tr\left( \left(X^{k}-X^*\right)^\top \widetilde{Z_1} \left(X^{k}-X^*\right)Z_2\right).
		\end{split}
	\end{equation*}
	Then, by substituting the above three equations into \eqref{eq:2.3} and using the properties of Kronecker product, we can get
	\begin{equation*}
		\begin{split}
			\left\|X^{k+1}-X^*\right\|_{F(G)}^2 & = \left\|X^{k}-X^*\right\|_{F(G)}^2-\left\|Z_1'\left(X^{k}-X^*\right)Z_2\right\|_{F(G)}^2 \\
			& = \left\|X^{k}-X^*\right\|_{F(G)}^2-\left\| vec \left( Z_1'\left(X^{k}-X^*\right)Z_2\right) \right\|_{I\otimes G}^2 \\
			& = \left\|X^{k}-X^*\right\|_{F(G)}^2-\left\| \left(Z_2^{\top} \otimes Z_1'  \right) vec \left(X^{k}-X^*\right) \right\|_{I\otimes G}^2,
		\end{split}
	\end{equation*}
	From  $\left\|A\right\|_{F(G) }^2=\left\| vec \left(A\right)\right\|_{I\otimes G}^2$, the second equation holds.
	
	Taking conditional expectations, we get
	\begin{equation}\label{eq:2.4}
		\mathbf{E}\left[ \left\|X^{k+1}-X^*\right\|_{F(G)}^2 \right]= \left\|X^{k}-X^*\right\|_{F(G)}^2-\mathbf{E}\left[ \left\| \left(Z_2^{\top} \otimes Z_1'  \right) vec \left(X^{k}-X^*\right) \right\|_{I\otimes G}^2 \right].
	\end{equation}
	By Lemmas \ref{lem:1.1} and  \ref{lem:7.1'}, it holds
	\begin{equation}\label{eq:2.1.10}
		\begin{split}
			& \mathbf{E}\left[ \left\| \left(Z_2^{\top} \otimes Z_1'  \right) vec \left(X^{k}-X^*\right) \right\|_{I\otimes G}^2 \right] \\
			=  &\mathbf{E}\left[vec \left(X^{k}-X^*\right)^{\top} \left(Z_2^{\top} \otimes Z_1' \right)^{\top} (I\otimes G) \left(Z_2^{\top} \otimes Z_1'  \right) vec \left(X^{k}-X^*\right) \right] \\
			= & vec \left(X^{k}-X^*\right)^{\top} (I\otimes G^{1/2}) \mathbf{E}\left[\left(Z_2^{T} \otimes (G^{-1/2}\widetilde{Z_1}G^{-1/2} ) \right)\right] (I\otimes G^{1/2}) vec \left(X^{k}-X^*\right).
		\end{split}
	\end{equation}
	With the symmetries of $ Z_2$ and $ G^{-1/2}\widetilde{Z_1}G^{-1/2}$ in Lemma \eqref{lem:7.1'}, it results in
	\begin{equation}\label{eq:2.5}
		\begin{split}
			\mathbf{E}\left[ \left\| \left(Z_2^{\top} \otimes Z_1'  \right) vec \left(X^{k}-X^*\right) \right\|_{I\otimes G}^2 \right] &	\geq 	\lambda_{\min}\left( \mathbf{E}\left[ Z_2^{\top} \otimes \hat{Z_1} \right] \right) \left\|(I\otimes G^{1/2})vec \left(X^{k}-X^*\right) \right\|_2^2 \\
			&=  \rho_c \left\|X^{k}-X^*\right\|_{F(G)}^2,
		\end{split}
	\end{equation}
	where $\left\|(I\otimes G^{1/2}) vec \left(X^{k}-X^*\right) \right\|_2^2 = \left\| vec \left(X^{k}-X^*\right) \right\|_{F(G)}^2$ and $\rho_c = \lambda_{\min}\left( \mathbf{E}\left[ Z_2^{\top} \otimes \hat{Z_1} \right] \right)$.
	For the inequality, the following estimate
$\lambda_{\min}=\underset{x\neq 0}{\min}\dfrac{x^{\top}Ax}{x^{\top}x} 	$
	is used.
	
	Therefore, combining \eqref{eq:2.4} and \eqref{eq:2.5}, we can obtain an estimate as follows
	\begin{equation*}
		\mathrm{E}\left[ \left\|X^{k+1}-X^*\right\|_{F(G)}^2 \right]=\left(1-\rho_c \right)\left\|X^{k}-X^*\right\|_{F(G)}^2.
	\end{equation*}
	Taking the full expectation of both sides, we can get that
	\begin{equation*}
		\mathbf{E}\left[ \left\|X^{k+1}-X^*\right\|_{F(G)}^2 \right]=\left(1-\rho_c \right) \mathbf{E}\left[ \left\|X^{k}-X^*\right\|_{F(G)}^2 \right].
	\end{equation*}
	By induction on the above process, the proof is completed.
\end{proof}

\begin{theorem}\label{thm:2}
	For every $X^*\in \mathbb{R}{^{m\times n}}$ satisfying $AX^*B=C$, we have the norm of expectation as follows
	\begin{equation*}
		\left\|\mathbf{E}\left[X^{k+1}-X^* \right]\right\|_{F(G)} \leq \rho^{k}  \left\|X^{0}-X^*\right\|_{F(G)},
	\end{equation*}
	where $\rho = \lambda_{\max} \left(I- \mathbf{E}\left[ Z_2 \otimes Z_1'\right] \right)=1-\lambda_{\min}  (\mathbf{E}\left[ Z_2 \otimes Z_1'\right])$.
\end{theorem}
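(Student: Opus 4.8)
The plan is to track the \emph{expected} error through the recursion and reduce the bound to a spectral estimate for a single symmetric matrix. Starting from the fixed point form \eqref{eq:22} and taking the conditional expectation given $X^{k}$ (the samples $S,P$ at step $k$ are drawn independently of the history), I would vectorize using the identity $vec(AXB)=(B^{\top}\otimes A)vec(X)$ from Lemma \ref{lem:1.1}. Since $Z_{2}=Z_{2}^{\top}$ by Lemma \ref{lem:7.1'}, this gives
\begin{equation*}
	vec\left(\mathbf{E}\left[X^{k+1}-X^{*}\mid X^{k}\right]\right)=\left(I-\mathbf{E}\left[Z_{2}\otimes Z_{1}'\right]\right)vec\left(X^{k}-X^{*}\right).
\end{equation*}
Writing $W:=I-\mathbf{E}\left[Z_{2}\otimes Z_{1}'\right]$ and taking full expectations, the tower property together with the i.i.d. sampling yields $vec\left(\mathbf{E}\left[X^{k+1}-X^{*}\right]\right)=W^{\,k+1}vec\left(X^{0}-X^{*}\right)$, so everything reduces to bounding the $(I\otimes G)$-norm of $W^{\,k+1}$ acting on $vec(X^{0}-X^{*})$.

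The main obstacle is that $W$ is \emph{not} symmetric in the Euclidean sense: $Z_{1}'$ is an oblique ($G$-orthogonal, not Euclidean-orthogonal) projection, so I cannot equate the operator $2$-norm of $W$ with its spectral radius directly. The remedy is to pass to the $(I\otimes G)$-geometry of Lemma \ref{lem:7.1} by conjugating with $I\otimes G^{1/2}$. Using $\hat{Z_{1}}=G^{-1/2}\widetilde{Z_{1}}G^{-1/2}=G^{1/2}Z_{1}'G^{-1/2}$ (Lemma \ref{lem:7.1'}) and the mixed-product rule of Lemma \ref{lem:1.1}, one finds
\begin{equation*}
	\hat{W}:=\left(I\otimes G^{1/2}\right)W\left(I\otimes G^{-1/2}\right)=I-\mathbf{E}\left[Z_{2}\otimes\hat{Z_{1}}\right].
\end{equation*}
Because $Z_{2}$ and $\hat{Z_{1}}$ are symmetric idempotents (Lemma \ref{lem:7.1'}), $Z_{2}\otimes\hat{Z_{1}}$ is an orthogonal projection, so $\mathbf{E}\left[Z_{2}\otimes\hat{Z_{1}}\right]$ is symmetric positive semidefinite with spectrum in $[0,1]$; hence $\hat{W}$ is symmetric with eigenvalues in $[0,1]$ and $\lambda_{\max}(\hat{W})=1-\lambda_{\min}\left(\mathbf{E}\left[Z_{2}\otimes\hat{Z_{1}}\right]\right)=\rho$ by \eqref{rho}. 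Since $\hat{W}$ is similar to $W$, this is consistent with the definition $\rho=\lambda_{\max}(W)$ in the statement.

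To finish, I would invoke the isometry $\|M\|_{F(G)}=\|(I\otimes G^{1/2})vec(M)\|_{2}$. Setting $u_{0}=(I\otimes G^{1/2})vec(X^{0}-X^{*})$ and using $(I\otimes G^{1/2})W^{\,k+1}(I\otimes G^{-1/2})=\hat{W}^{\,k+1}$, we get
\begin{equation*}
	\left\|\mathbf{E}\left[X^{k+1}-X^{*}\right]\right\|_{F(G)}=\left\|\hat{W}^{\,k+1}u_{0}\right\|_{2}\leq\rho^{\,k+1}\|u_{0}\|_{2}=\rho^{\,k+1}\left\|X^{0}-X^{*}\right\|_{F(G)},
\end{equation*}
where the inequality uses that $\hat{W}$ is symmetric with $0\le\lambda\le\rho$, whence $\|\hat{W}^{j}\|_{2}=\rho^{j}$. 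This delivers the claimed geometric decay (matching the stated exponent up to the indexing of the first iterate). I expect the only delicate point to be the symmetrization step, since it is precisely the non-self-adjointness of $W$ that distinguishes this result from Theorem \ref{thm:2.1}; note also that the two theorems measure different quantities — here the norm of the expected error, there the expected squared norm — and Jensen's inequality confirms the present bound is the stronger (smaller) of the two.
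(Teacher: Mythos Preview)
Your proposal is correct and follows essentially the same approach as the paper: vectorize the error recursion via $Z_2^\top=Z_2$, take expectations, and then handle the non-symmetry of $W=I-\mathbf{E}[Z_2\otimes Z_1']$ by conjugating with $I\otimes G^{1/2}$ to pass to the symmetric matrix $\hat W=I-\mathbf{E}[Z_2\otimes\hat Z_1]$, whose operator $2$-norm equals its spectral radius $\rho$. The only cosmetic difference is that the paper establishes the one-step bound $\|\mathbf{E}[X^{k+1}-X^*]\|_{F(G)}\le\rho\,\|\mathbf{E}[X^{k}-X^*]\|_{F(G)}$ via the $(I\otimes G)$-operator norm and then inducts, whereas you iterate first to $W^{k+1}$ and bound $\|\hat W^{k+1}\|_2$; your observation about the exponent ($\rho^{k+1}$ versus the stated $\rho^{k}$) is well taken.
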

\begin{proof}
	By the Kronecker product, the iterative formula \eqref{eq:2.1.3} can be written as follows
	\begin{equation}\label{eq:22.1}
		vec\left(X^{k+1}-X^*\right)=\left(I-\left( Z_2^{\top}\otimes Z_1'\right) \right)vec\left(X^k-X^*\right).
	\end{equation}
	It is evident that the transform $X \rightarrow vec\left(X\right)$ gives a linear isomorph of $\mathbb{R}{^{m\times n}} \rightarrow \mathbb{R}{^{mn}}$.
	Since $Z_2^{\top}=Z_2$, taking expectations conditioned on $X^k$ in \eqref{eq:22.1} we have
	\begin{equation*}
		\mathbf{E}\left[vec\left(X^{k+1}-X^*\right)|vec\left(X^k\right) \right] = \left(I-\mathbf{E}\left[ Z_2\otimes Z_1'\right] \right)vec\left(X^k-X^*\right).
	\end{equation*}
	Taking expectations again gives
	\begin{equation*}
		\mathbf{E}\left[vec\left(X^{k+1}-X^*\right) \right] = \left(I-\mathbf{E}\left[ Z_2\otimes Z_1'\right] \right)\mathbf{E}\left[vec\left(X^k-X^*\right)\right].
	\end{equation*}
	Applying Lemma \ref{lem:7.1'} and the norms to both sides we obtain the estimate
	\begin{equation*}
		\begin{split}
			\left\|\mathbf{E}\left[vec\left(X^{k+1}-X^*\right) \right]\right\|^2_{I\otimes G} &=\left\|\left(I-\mathbf{E}\left[ Z_2\otimes Z_1'\right] \right)\mathbf{E}\left[vec\left(X^k-X^*\right)\right] \right\|^2_{I\otimes G}
			\\& \leq \left\|\left(I-\mathbf{E}\left[ Z_2\otimes Z_1'\right] \right) \right\|^2_{I\otimes G} \left\| \mathbf{E}\left[vec\left(X^k-X^*\right)\right] \right\|^2_{I\otimes G}
		\end{split}
	\end{equation*}

	\begin{equation*}
		\begin{split}
			\left\|\left(I-\mathbf{E}\left[ Z_2\otimes Z_1'\right] \right) \right\|^2_{I\otimes G}
			&= \underset{\left\|(I\otimes G^{1/2})x\right\|_2=1}{\max}\left\|(I\otimes G^{1/2})\left(I-\mathbf{E}\left[ Z_2\otimes Z_1'\right] \right) x\right\|^2_2.
		\end{split}
	\end{equation*}
	
	Substituting $y=G^{1/2}x$ in the above gives
	\begin{equation*}
		\begin{split}
			\left\|\left(I-\mathbf{E}\left[ Z_2\otimes Z_1'\right] \right) \right\|^2_{I\otimes G}
			& = \underset{\left\| y\right\|_2=1}{\max}\left\|(I\otimes G^{1/2})\left(I-\mathbf{E}\left[ Z_2\otimes Z_1'\right] \right)(I\otimes G^{-1/2}) y\right\|^2_2\\
			&=\underset{\left\| y\right\|_2=1}{\max}\left\|\left(I-\mathbf{E}\left[ Z_2\otimes (G^{-1/2}\widetilde{Z_1}G^{-1/2})\right] \right) y\right\|^2_2\\
			&=\lambda_{\max}^{2}\left(I-\mathbf{E}\left[ Z_2\otimes (G^{-1/2}\widetilde{Z_1}G^{-1/2})\right] \right),
		\end{split}
	\end{equation*}
	the third equality we used the symmetry of $I-\mathbf{E}\left[ Z_2\otimes (G^{-1/2}\widetilde{Z_1}G^{-1/2})\right]$ when
	passing from the operator norm to the spectral radius. Note that the symmetry of $\mathbf{E}\left[ Z_2\otimes (G^{-1/2}\widetilde{Z_1}G^{-1/2})\right]$
	derives from the symmetries of $ Z_2$ and $ G^{-1/2}\widetilde{Z_1}G^{-1/2}$ in Lemma \ref{lem:7.1'}.
	Considering that the vector operator is isomorphic and $\left\|vec(X^k-X^*)\right\|^2_{I\otimes G}=\left\|X^k-X^*\right\|^2_{F(G)}$, with the formula \eqref{rho} we have
	\begin{equation*}
		\begin{split}
			\left\|\mathbf{E}\left[X^{k+1}-X^* \right] \right\|_{F(G)} & = \left\|\mathbf{E}\left[vec\left(X^{k+1}-X^*\right) \right]\right\|_{I\otimes G} \\
			& \leq \rho \left\| \mathbf{E}\left[X^k-X^*\right] \right\|_{F(G)}.
		\end{split}
	\end{equation*}
	By induction, the conclusion follows. The proof is completed.
\end{proof}

\subsection{Convergence rate and convergence conditions}

 To show that the rate $\rho$ is meaningful, in Lemma \ref{lem:7.2} we prove that $0\leq \rho \leq1$. We also provide a meaningful lower bound for $\rho$.

\begin{theorem}\label{lem:7.2}
	The quantity $\rho = 1-\lambda_{\min}\left(Z_2 \otimes Z_1'\right)$ satisfies
	\begin{equation*}
		0\leq 1-\frac{\mathbf{E}\left[d\right]}{mn}\leq \rho \leq 1,
	\end{equation*}
	where $d=rank\left((P^{\top}B^{\top}) \otimes (S^{\top}A)\right)$.
\end{theorem}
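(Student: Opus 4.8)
The plan is to reduce the entire statement to a pair of bounds on the smallest eigenvalue of one symmetric matrix. Set $M := \mathbf{E}\left[Z_2 \otimes \hat{Z_1}\right]$, where $\hat{Z_1}=G^{-1/2}\widetilde{Z_1}G^{-1/2}$. By the chain of identities already established in \eqref{rho}, we have $\rho = 1-\lambda_{\min}(M)$, so the three-term inequality in the statement is equivalent to $0 \le \lambda_{\min}(M) \le \mathbf{E}\left[d\right]/mn$. The whole argument rests on one structural observation: $Z_2 \otimes \hat{Z_1}$ is an orthogonal projection in the ordinary Euclidean inner product. Indeed, Lemma \ref{lem:7.1'} gives $Z_2^{\top}=Z_2=Z_2^2$ and $\hat{Z_1}^{\top}=\hat{Z_1}=\hat{Z_1}^2$, and a Kronecker product of symmetric idempotents is again symmetric and idempotent.

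For the upper bound $\rho \le 1$, I would note that a symmetric idempotent is positive semidefinite, so $Z_2 \otimes \hat{Z_1} \succeq 0$ for every realization of $S,P$; taking expectations preserves this, giving $M \succeq 0$ and hence $\lambda_{\min}(M)\ge 0$. This immediately yields $\rho = 1-\lambda_{\min}(M) \le 1$.

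For the lower bound I would invoke the elementary fact that for any real symmetric matrix of order $mn$ the smallest eigenvalue never exceeds the average of the eigenvalues, i.e. $\lambda_{\min}(M) \le \frac{1}{mn}Tr(M)$. It then remains to evaluate the trace. By linearity, $Tr(M)=\mathbf{E}\left[Tr(Z_2 \otimes \hat{Z_1})\right]$, and since a symmetric idempotent has trace equal to its rank, I need the rank of $Z_2 \otimes \hat{Z_1}$. Because $\hat{Z_1}=G^{1/2}Z_1'G^{-1/2}$ is similar to $Z_1'$, it has the same rank as $Z_1'$; by multiplicativity of rank under Kronecker products and the dimension count in Lemma \ref{lem:7.1}, which pins the range of $Z_2 \otimes Z_1'$ at exactly $d$, we obtain $Tr(Z_2 \otimes \hat{Z_1})=d$. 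Hence $Tr(M)=\mathbf{E}\left[d\right]$ and $\lambda_{\min}(M) \le \mathbf{E}\left[d\right]/mn$, which is precisely $\rho \ge 1-\mathbf{E}\left[d\right]/mn$. The leftmost inequality $0 \le 1-\mathbf{E}\left[d\right]/mn$ then follows from the deterministic bound $d \le \min\{\tau_1\tau_2, mn\} \le mn$ recorded just before Lemma \ref{lem:7.1}, so that $\mathbf{E}\left[d\right]\le mn$.

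I expect the only genuinely delicate point to be the trace/rank bookkeeping, specifically confirming that the rank of $Z_2 \otimes \hat{Z_1}$ equals $d$ exactly rather than merely bounding it. This is where the similarity $\hat{Z_1}=G^{1/2}Z_1'G^{-1/2}$ and the exact dimension identification of Lemma \ref{lem:7.1} do the essential work; everything else is a routine combination of positive semidefiniteness with the ``minimum $\le$ average'' eigenvalue estimate.
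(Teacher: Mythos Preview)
Your proof is correct and follows essentially the same structure as the paper's: both exploit that $Z_2 \otimes \hat{Z_1}$ is a symmetric idempotent, and then use $\lambda_{\min}(M) \le Tr(M)/(mn)$ together with the rank-$d$ identification from Lemma \ref{lem:7.1} to obtain the lower bound on $\rho$. The one difference is that for $\rho \le 1$ the paper invokes Jensen's inequality via the convexity of $A \mapsto -\lambda_{\min}(A)$ (and also proves the unneeded bound $\lambda_{\max}(\mathbf{E}[\cdot])\le 1$), whereas your direct observation that an expectation of positive semidefinite matrices remains positive semidefinite is simpler and sidesteps that machinery.
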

\begin{proof}
	Through Lemma \ref{lem:7.1'} we known $Z_2 \otimes \hat{Z_1}$ is a projection, then we get
	\begin{equation*}
		\left[\left(I \otimes G^{-1/2}\right)\left(Z_2 \otimes \widetilde{Z_1}\right)\left(I \otimes G^{-1/2}\right)\right]^2 = \left(I \otimes G^{-1/2}\right)\left(Z_2 \otimes \widetilde{Z_1}\right)\left(I \otimes G^{-1/2}\right),
	\end{equation*}
	where$\widetilde{Z_1}=GZ_1$, whence the spectrum of $\left(I \otimes G^{-1/2}\right)\left(Z_2 \otimes \widetilde{Z_1}\right)\left(I \otimes G^{-1/2}\right)$ is contained in $\left\{0,1\right\}$. Using this, combined with the fact that the mapping $A\rightarrow \lambda_{\max}\left(A\right)$ is convex on the set of symmetric matrices and Jensen's inequality, we have
	\begin{equation*}
		\begin{split}
			\lambda_{\max}\left(\mathbf{E}\left[Z_2 \otimes Z_1'\right]\right) & = \lambda_{\max}\left(\left(I \otimes G^{-1/2}\right)\mathbf{E}\left[Z_2 \otimes \widetilde{Z_1}\right]\left(I \otimes G^{-1/2}\right)\right) \\
			& \leq \mathbf{E}\left[\lambda_{\max}\left(Z_2 \otimes G^{-1/2}\widetilde{Z_1} G^{-1/2}\right)\right] \leq 1.
		\end{split}
	\end{equation*}
	Analogously, with the convexity of the mapping $A\rightarrow -\lambda_{\min}\left(A\right)$, it holds $\lambda_{\min}\left(\mathbf{E}\left[Z_2 \otimes Z_1'\right]\right) \geq0$. Thus $\lambda_{\min}\left(\mathbf{E}\left[Z_2 \otimes Z_1'\right]\right) \in \left[0,1\right]$ which implies $0\leq \rho \leq1$. To the lower bound, we use the fact that the trace of a matrix is the sum of its eigenvalues, and have
	\begin{eqnarray*}
		\mathbf{E}\left[Tr\left(Z_2 \otimes Z_1'\right)\right] = Tr\left(\mathbf{E}\left[Z_2 \otimes Z_1'\right]\right) \geq mn\lambda_{\min}\left(\mathbf{E}\left[Z_2 \otimes Z_1'\right]\right).
	\end{eqnarray*}
	Since $Z_2 \otimes Z_1'$ is project on a d-dimensional subspace from Lemma \ref{lem:7.1}, it results in $Tr\left(Z_2 \otimes Z_1'\right)=d$. Thus, from the above formula, we can get
	\begin{equation*}
		\rho = 1-\lambda_{\min}\left(\mathbf{E}\left[Z_2 \otimes Z_1'\right]\right) \geq 1-\frac{\mathbf{E}\left[d\right]}{mn}.
	\end{equation*}
\end{proof}

\begin{lemma}\label{lem:7.3}
	If $\mathbf{E}\left[Z_2 \otimes \widetilde{Z_1}\right]$ is invertible, then $\rho=1- \lambda_{min}\left(\mathbf{E}\left[Z_2 \otimes Z_1'\right]\right) < 1$, $B^{\top} \otimes A$ and $(BP)^{\top} \otimes (S^{\top}A)$ have full column rank, and $X^*$ is
	unique.
\end{lemma}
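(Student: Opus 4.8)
The plan is to reduce all four assertions to the positive definiteness of the symmetric matrix $\mathbf{E}\left[Z_2 \otimes \hat{Z_1}\right]$ and then read them off one at a time. First I would record the congruence identity $\mathbf{E}\left[Z_2 \otimes \hat{Z_1}\right] = (I \otimes G^{-1/2})\mathbf{E}\left[Z_2 \otimes \widetilde{Z_1}\right](I \otimes G^{-1/2})$ already exploited in \eqref{rho}. Since $I \otimes G^{-1/2}$ is invertible, the hypothesis that $\mathbf{E}\left[Z_2 \otimes \widetilde{Z_1}\right]$ is invertible is equivalent to invertibility of $\mathbf{E}\left[Z_2 \otimes \hat{Z_1}\right]$. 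By Lemma \ref{lem:7.1'} both $Z_2$ and $\hat{Z_1}$ are symmetric idempotents, so $Z_2 \otimes \hat{Z_1}$ is an orthogonal projection and in particular symmetric positive semidefinite; hence $\mathbf{E}\left[Z_2 \otimes \hat{Z_1}\right] \succeq 0$, and invertibility upgrades this to $\mathbf{E}\left[Z_2 \otimes \hat{Z_1}\right] \succ 0$, i.e. $\lambda_{\min}\left(\mathbf{E}\left[Z_2 \otimes \hat{Z_1}\right]\right) > 0$. Combined with \eqref{rho} this yields $\rho = 1 - \lambda_{\min}\left(\mathbf{E}\left[Z_2 \otimes Z_1'\right]\right) < 1$, the first claim.

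Next I would obtain the rank statement for $B^\top \otimes A$ by a range-containment argument. For each realization, $\hat{Z_1}$ is the orthogonal projection onto $Range(G^{-1/2}A^\top S) \subseteq Range(G^{-1/2}A^\top)$ and $Z_2$ is the projection onto $Range(BP) \subseteq Range(B)$, so $Range(Z_2 \otimes \hat{Z_1}) \subseteq Range(B) \otimes Range(G^{-1/2}A^\top)$ for every outcome. Because the range of a positive semidefinite expectation equals the span of the ranges of its summands, $Range\left(\mathbf{E}\left[Z_2 \otimes \hat{Z_1}\right]\right) \subseteq Range(B) \otimes Range(G^{-1/2}A^\top)$. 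Positive definiteness forces the left-hand side to be all of $\mathbb{R}^{mn}$, so the right-hand side has dimension $mn$; since $G^{-1/2}$ is invertible this means $rank(A)\cdot rank(B) = mn$, hence $rank(A) = m$ and $rank(B) = n$. Thus $A$ has full column rank and $B$ full row rank, which is precisely full column rank of $B^\top \otimes A$. Uniqueness of $X^*$ follows at once: full column rank makes the map $vec(X) \mapsto (B^\top \otimes A)vec(X) = vec(AXB)$ injective, so the consistent equation $AX^*B = C$ has exactly one solution.

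The delicate point is the claim that $(BP)^\top \otimes (S^\top A)$ has full column rank, and I expect it to be the main obstacle. Its rank is $d = rank(BP)\,rank(S^\top A)$, so the claim asserts $d = mn$ for the drawn sketch. The most the above reasoning delivers is the support-level identity $Null\left(\mathbf{E}\left[Z_2 \otimes \hat{Z_1}\right]\right) = \{\, v : (P^\top B^\top \otimes S^\top A G^{-1/2})v = 0 \text{ almost surely} \,\}$, so positive definiteness only says that no nonzero $v$ lies in the null space for \emph{every} draw; this is a statement about the whole distribution, not a single realization. For a genuinely random sketch an individual projection $Z_2 \otimes \hat{Z_1}$ can have rank $d < mn$ while the average is still invertible, so $d = mn$ need not hold outcome-by-outcome. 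I would therefore either prove the full-rank conclusion only where it is valid — e.g. for an (essentially) deterministic sketch, where $\mathbf{E}\left[Z_2 \otimes \hat{Z_1}\right] = Z_2 \otimes \hat{Z_1}$ is a projection and can be positive definite only if it equals the identity, forcing $d = mn$ — or reinterpret this conclusion as the exactness statement $Null\left(\mathbf{E}\left[Z_2 \otimes \widetilde{Z_1}\right]\right) = \{0\}$ that the argument genuinely produces, flagging that the per-realization reading requires a rank-preserving sampling assumption.
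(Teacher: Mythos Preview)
Your treatment of $\rho<1$, full column rank of $B^\top\otimes A$, and uniqueness of $X^*$ is correct and essentially the paper's argument: it too passes to the symmetric matrix $\mathbf{E}[Z_2\otimes\hat Z_1]$ via the congruence with $I\otimes G^{-1/2}$, and then argues the rank claim by contradiction on the null space (pick $x\neq 0$ with $(B^\top\otimes A)x=0$, so $AXB=0$, hence $S^\top AXB P=0$ and $\widetilde{Z_1}XZ_2=0$ for every realization, giving $\mathbf{E}[Z_2\otimes\widetilde{Z_1}]\,vec(X)=0$). Your range--containment version is simply the dual of that null--space argument.

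Your reservations about the clause ``$(BP)^\top\otimes(S^\top A)$ has full column rank'' are well placed, and here you are in fact more careful than the paper. The paper's proof disposes of this step with the single word ``Analogously'', but the analogy breaks: for $B^\top\otimes A$ the key point is that $AXB=0$ forces $\widetilde{Z_1}XZ_2=0$ for \emph{every} realization, whereas $(S^\top A)X(BP)=0$ for one particular draw says nothing about other draws and hence nothing about the expectation. Your implicit counterexample is easy to make explicit: take $A=B=G=I_2$ and let $S,P$ be independent uniform picks from $\{e_1,e_2\}$; then $\mathbf{E}[Z_2\otimes\widetilde{Z_1}]=\tfrac14 I_4$ is invertible, yet each $(BP)^\top\otimes(S^\top A)$ is a $1\times 4$ row and cannot have column rank $4$. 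So the per--realization reading of this clause is false in general, and your two proposed interpretations (a deterministic sketch, or the distributional statement that the almost--sure common null space is trivial) are the honest ways to salvage it. Downstream, the paper only uses the forward implication ``$(BP)^\top\otimes(S^\top A)$ full column rank $\Rightarrow$ $\mathbf{E}[Z_2\otimes\widetilde{Z_1}]$ positive definite'' (Lemma~\ref{lem:2.01}, Theorem~\ref{lem:7.5}), so the applications survive; but the converse asserted here does not hold without an extra hypothesis on the sampling.
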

\begin{proof}
See Appendix for more details.
\end{proof}
\begin{lemma}[\cite{Golub2014}]\label{lem:2.0} If $A\in \mathbb{R}^{n\times n}$ is symmetric positive definite and $X\in \mathbb{R}^{n\times k}$ has rank $k$, then
	$B =X^{\top} AX\in \mathbb{R}^{k\times k}$ is also symmetric positive definite.
\end{lemma}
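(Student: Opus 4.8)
The plan is to verify directly the two defining properties of a symmetric positive definite matrix: symmetry of $B$ and strict positivity of its associated quadratic form. Since both $A$ and $X$ are given explicitly and the claim is a standard fact, neither property requires machinery beyond elementary manipulation, so the whole argument reduces to a short two-part verification.

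First I would check symmetry. Using the transpose rule for products together with the hypothesis $A^{\top}=A$, one computes $B^{\top}=\left(X^{\top}AX\right)^{\top}=X^{\top}A^{\top}X=X^{\top}AX=B$, so $B$ is symmetric. This step is purely formal and uses only that $A$ is symmetric; the rank condition on $X$ plays no role here. Next I would establish positive definiteness. Fix an arbitrary nonzero vector $y\in\mathbb{R}^{k}$ and examine the quadratic form $y^{\top}By=y^{\top}X^{\top}AXy=\left(Xy\right)^{\top}A\left(Xy\right)$. Setting $z=Xy$, the form becomes $z^{\top}Az$, and because $A$ is positive definite we have $z^{\top}Az>0$ whenever $z\neq 0$. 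It therefore remains only to guarantee that $z=Xy\neq 0$.

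This last point is exactly where the hypothesis $\mathrm{rank}(X)=k$ enters: full column rank means $\mathrm{Null}(X)=\{0\}$, hence $y\neq 0$ forces $Xy\neq 0$. Consequently $y^{\top}By=z^{\top}Az>0$ for every nonzero $y$, which is the definition of positive definiteness; combined with the symmetry established above, $B$ is symmetric positive definite. The argument has no genuine obstacle — the only point demanding care is not to overlook the role of the rank assumption. Were $X$ rank-deficient, some nonzero $y$ would lie in its null space, giving $z=Xy=0$ and hence $y^{\top}By=0$, so $B$ would be merely positive semidefinite. Thus the full-column-rank hypothesis is precisely what upgrades semidefiniteness to definiteness, and confirming that it is invoked correctly is the single substantive detail of the proof.
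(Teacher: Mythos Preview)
Your proof is correct and is the standard direct verification: symmetry follows from $A^{\top}=A$, and strict positivity of the quadratic form follows because full column rank of $X$ forces $Xy\neq 0$ whenever $y\neq 0$, so positive definiteness of $A$ gives $(Xy)^{\top}A(Xy)>0$.

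The paper does not supply its own proof of this lemma; it simply cites the result from Golub and Van Loan's \emph{Matrix Computations}. Your argument is exactly the classical one found there, so there is nothing to compare beyond noting that you have filled in what the paper leaves as a reference.
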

\begin{lemma}\label{EP}
	For an arbitrary symmetric positive definite matrix $A$, there exists 
	$\mathbf{E}\left[A^2\right]\succeq \left(\mathbf{E}\left[A\right]\right)^{\top}\mathbf{E}\left[A\right].$
\end{lemma}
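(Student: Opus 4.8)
The plan is to prove this as a matrix analogue of Jensen's inequality, exploiting the fact that the expectation operator preserves both symmetry and the positive semidefinite (L\"owner) order. First I would observe that since $A$ is symmetric almost surely, its expectation $\bar A := \mathbf{E}\left[A\right]$ is also symmetric, so the claimed right-hand side reduces to $(\mathbf{E}\left[A\right])^{\top}\mathbf{E}\left[A\right] = \bar A^2$. The target is therefore equivalent to showing the matrix ``variance'' $\mathbf{E}\left[A^2\right] - \bar A^2 \succeq 0$.

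The central step is to realize $\mathbf{E}\left[A^2\right]-\bar A^2$ as the expectation of a manifestly positive semidefinite quantity. I would form the random matrix $\left(A - \bar A\right)^{\top}\left(A - \bar A\right)$, which is positive semidefinite for every realization of $A$ because it is a Gram product; using $A^{\top}=A$ and $\bar A^{\top}=\bar A$ this equals $\left(A-\bar A\right)^2 = A^2 - A\bar A - \bar A A + \bar A^2$. Since $\bar A$ is deterministic, linearity of expectation collapses the cross terms, $\mathbf{E}\left[A\bar A\right]=\bar A^2$ and $\mathbf{E}\left[\bar A A\right]=\bar A^2$, yielding $\mathbf{E}\left[\left(A-\bar A\right)^2\right] = \mathbf{E}\left[A^2\right] - \bar A^2$. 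To conclude I would invoke that expectation preserves the PSD order: for any fixed $x$ and any $M\succeq 0$ almost surely, $x^{\top}\mathbf{E}\left[M\right]x = \mathbf{E}\left[x^{\top}Mx\right]\geq 0$, whence $\mathbf{E}\left[M\right]\succeq 0$. Applying this with $M = \left(A-\bar A\right)^2$ gives $\mathbf{E}\left[A^2\right]-\bar A^2\succeq 0$, which is exactly the assertion.

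There is no serious obstacle here; the only points requiring care are that the interchange of expectation with the entrywise quadratic form is legitimate (by linearity of expectation on each entry, assuming the second moment $\mathbf{E}\left[A^2\right]$ exists), and that positive definiteness of $A$ is in fact not needed, symmetry alone suffices. As an alternative route I would note the block identity $\left(\begin{smallmatrix} A^2 & A \\ A & I\end{smallmatrix}\right) = \left(\begin{smallmatrix} A \\ I\end{smallmatrix}\right)\left(\begin{smallmatrix} A & I\end{smallmatrix}\right)\succeq 0$ for symmetric $A$, take expectations to obtain $\left(\begin{smallmatrix} \mathbf{E}\left[A^2\right] & \bar A \\ \bar A & I\end{smallmatrix}\right)\succeq 0$, and apply the Schur complement criterion with respect to the identity block to recover $\mathbf{E}\left[A^2\right]-\bar A^2\succeq 0$.
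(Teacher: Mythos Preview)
Your proposal is correct and follows essentially the same approach as the paper: both arguments center the random matrix and observe that $\mathbf{E}\left[(A-\bar A)^{\top}(A-\bar A)\right]\succeq 0$, with the paper carrying this out by fixing a test vector $c$, setting $Y=Ac$, and reducing to a sum of scalar variances $\sum_i \operatorname{Var}(y_i)\ge 0$. Your version is slightly more streamlined (invoking directly that expectation preserves the L\"owner order), and your observations that positive definiteness is not needed and that a Schur-complement argument gives an alternative are correct extras not in the paper.
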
	
\begin{proof}
	See Appendix for more details.
\end{proof}

\begin{lemma}\label{lem:2.01} $(BP)^{\top} \otimes (S^{\top}A)$ is full column rank if and only if $\mathbf{E}\left[Z_2 \otimes \widetilde{Z_1}\right]$ is symmetric positive definite .
\end{lemma}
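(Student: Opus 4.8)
The plan is to reduce the statement to a question about orthogonal projectors via the congruence $I\otimes G^{1/2}$, and then to translate full column rank into those projectors being identities. First I would record the identity $\mathbf{E}[Z_2\otimes\widetilde{Z_1}]=(I\otimes G^{1/2})\,\mathbf{E}[Z_2\otimes\hat{Z_1}]\,(I\otimes G^{1/2})$, which follows from $\widetilde{Z_1}=G^{1/2}\hat{Z_1}G^{1/2}$ (Lemma \ref{lem:7.1'}). Since $I\otimes G^{1/2}$ is invertible, $\mathbf{E}[Z_2\otimes\widetilde{Z_1}]$ is symmetric positive definite if and only if $M:=\mathbf{E}[Z_2\otimes\hat{Z_1}]$ is. By Lemma \ref{lem:7.1'} each $Z_2$ and each $\hat{Z_1}$ is a symmetric idempotent, so $Z_2\otimes\hat{Z_1}$ is an orthogonal projector and $0\preceq Z_2\otimes\hat{Z_1}\preceq I$ pointwise; taking expectations gives $0\preceq M\preceq I$. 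Thus the whole question becomes: when does the averaged projector $M$ lose no direction, i.e. when is $\mathrm{Null}(M)=\{0\}$?

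For the direction ``full column rank $\Rightarrow$ positive definite'', I would use that $Z_2=BP(P^\top B^\top BP)^\dagger P^\top B^\top$ and $\hat{Z_1}=N(N^\top N)^\dagger N^\top$ with $N=G^{-1/2}A^\top S$ are the orthogonal projectors onto $\mathrm{Range}(BP)$ and $\mathrm{Range}(G^{-1/2}A^\top S)$, respectively. If $(BP)^\top\otimes(S^\top A)$ has full column rank then both Kronecker factors do, i.e. $\mathrm{rank}(BP)=n$ and $\mathrm{rank}(S^\top A)=m$, which is equivalent to $\mathrm{Range}(BP)=\mathbb{R}^n$ and $\mathrm{Range}(A^\top S)=\mathbb{R}^m$. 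Hence $Z_2=I_n$ and $\hat{Z_1}=I_m$, so $\widetilde{Z_1}=G^{1/2}I_mG^{1/2}=G$ (in the square case Lemma \ref{lem:2.0} already guarantees the inner Gram matrix $S^\top AG^{-1}A^\top S$ is nonsingular, but the range description through the pseudoinverse covers the general case directly). Therefore $Z_2\otimes\widetilde{Z_1}=I_n\otimes G$, and taking expectation yields $\mathbf{E}[Z_2\otimes\widetilde{Z_1}]=I_n\otimes G\succ0$.

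For the converse ``positive definite $\Rightarrow$ full column rank'', I would argue through the null space of $M$. Because $Z_2\otimes\hat{Z_1}$ is an orthogonal projector, for any fixed $v$ one has $v^\top Mv=\mathbf{E}\big[\,\|(Z_2\otimes\hat{Z_1})v\|_2^2\,\big]$, so $v\in\mathrm{Null}(M)$ forces $(Z_2\otimes\hat{Z_1})v=0$ almost surely, i.e. $v$ is orthogonal to $\mathrm{Range}(BP)\otimes\mathrm{Range}(G^{-1/2}A^\top S)$. Consequently $M\succ0$ is exactly the statement that these ranges collectively span $\mathbb{R}^{nm}$, and I would unwind this, using the independence of $S$ and $P$ together with $\mathrm{Range}(A\otimes B)=\mathrm{Range}(A)\otimes\mathrm{Range}(B)$, into the pair of conditions $\mathrm{Range}(BP)=\mathbb{R}^n$ and $\mathrm{Range}(A^\top S)=\mathbb{R}^m$, which by the range-versus-rank dictionary of the previous paragraph is precisely full column rank of $(BP)^\top\otimes(S^\top A)$.

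The hard part will be making this converse airtight once genuine randomness in $S,P$ is allowed: $M\succ0$ is an aggregate spanning condition over the support of the sketches, whereas ``full column rank of $(BP)^\top\otimes(S^\top A)$'' is a statement about the realized matrix. The two coincide cleanly exactly when $Z_2\otimes\hat{Z_1}=I$ almost surely, that is in the full-dimensional sketch regime (and trivially when $S,P$ are deterministic, where $\mathbf{E}$ is vacuous). I would therefore state the equivalence for almost every realization and isolate, as the single technical step to secure, the passage from $\mathrm{Null}(M)=\{0\}$ to the pointwise identities $Z_2=I_n$ and $\hat{Z_1}=I_m$; the remaining pieces, namely the congruence reduction and the Kronecker range-rank bookkeeping, are routine.
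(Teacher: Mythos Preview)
Your forward direction (full column rank $\Rightarrow$ positive definite) is correct but takes a different and more elementary route than the paper. The paper does not observe that the hypothesis forces $Z_2=I_n$ and $\hat{Z_1}=I_m$; instead it argues that each realization $Z_2\otimes\widetilde{Z_1}$ is symmetric positive definite via Lemma~\ref{lem:2.0}, writes it as $H^\top H$ with $H$ the positive definite square root, applies the variance-type inequality $\mathbf{E}[H^\top H]\succeq(\mathbf{E}[H])^\top\mathbf{E}[H]$ of Lemma~\ref{EP}, and finishes with a short contradiction ($\mathbf{E}[H]y=0$ would force $\mathbf{E}[y^\top Hy]=0$, impossible for $y\neq0$). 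Your argument bypasses Lemma~\ref{EP} entirely and even yields the explicit value $\mathbf{E}[Z_2\otimes\widetilde{Z_1}]=I_n\otimes G$, which is sharper.

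For the converse, the paper does not attempt your null-space analysis at all; it simply invokes Lemma~\ref{lem:7.3}, which asserts that invertibility of $\mathbf{E}[Z_2\otimes\widetilde{Z_1}]$ forces $(BP)^\top\otimes(S^\top A)$ to have full column rank. Your hesitation about this direction is in fact well-founded: the passage from $\mathrm{Null}(M)=\{0\}$ to the pointwise identity $Z_2\otimes\hat{Z_1}=I$ fails for genuinely random sketches (take $A=B=G=I_2$, $S=e_i$, $P=e_j$ uniform; then $\mathbf{E}[Z_2\otimes\widetilde{Z_1}]=\tfrac14 I\succ0$ while every realization $(BP)^\top\otimes(S^\top A)=e_j^\top\otimes e_i^\top$ has rank one). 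The ``analogously'' step in the paper's proof of Lemma~\ref{lem:7.3} carries the same defect, since the null vector it would produce depends on the realization and cannot be pushed through the expectation. So your diagnosis of the hard part is accurate; the paper's cited route does not actually close the gap either, and the equivalence as literally stated does not hold for general random $S,P$.
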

\begin{proof}
	Since $(BP)^{\top} \otimes (S^{\top}A)$ is full column rank and $G$ is symmetric positive definite, we can get $Z_2 \otimes \widetilde{Z_1}$ is symmetric positive definite by Lemma \ref{lem:2.0}. Since $Z_2 \otimes \widetilde{Z_1}$ is symmetric positive definite,  it holds $Z_2^{\top} \otimes \widetilde{Z_1}=H^{\top}H$ where $H$ is symmetric positive definite. Then,
	for every $y\in \mathbb{R}^{mn} \neq 0$,
	\begin{equation*}
		\begin{split}
			y^{\top}\mathbf{E}\left[\left(Z_2^{\top} \otimes \widetilde{Z_1} \right)\right]y&=
			y^{\top}\mathbf{E}\left[H^{\top}H \right]y\\
			&\geq y^{\top}\mathbf{E}\left[H\right]^{\top}\mathbf{E}\left[H\right]y\\
			&> 0.
		\end{split}
	\end{equation*}
%
The first inequality is obtained by Lemma \ref{EP}. It is easy to know that $y^{\top}\mathbf{E}\left[H\right]^{\top}\mathbf{E}\left[H\right]y\geq 0 $.
	If $y^{\top}\mathbf{E}\left[H\right]^{\top}\mathbf{E}\left[H\right]y=0,$ then $y^{\top}\mathbf{E}\left[H\right]y=0$, i.e., $y^{\top}Hy=0$ which contradicts with the property that $H$ is symmetric positive definite. Thus, the necessity exists. Meanwhile,
	the sufficiency is obtained by Lemma \ref{lem:7.3}.
\end{proof}

\begin{lemma}[\cite{GRM}]\label{lem:7.4}
	Let $Z = Z_2 \otimes  \widetilde{Z_1}$. If $\mathbf{E}\left[Z\right]$ is symmetric positive definite, then
	\begin{equation*}
		\left\langle\mathbf{E}\left[Z\right]y,y\right\rangle \geq \left(1-\rho\right)\left\|y\right\|^2_{I\otimes G}, \ \ \text{for all\ } y \in \mathbb{R}^{mn},
	\end{equation*}
	where $\rho = 1-\lambda_{\min}\left(Z_2 \otimes Z_1'\right)$ and $G$ symmetric positive definite.
\end{lemma}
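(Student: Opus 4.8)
The plan is to reduce the $(I\otimes G)$-weighted inequality to a standard Rayleigh-quotient estimate for a genuinely symmetric matrix, using the symmetry facts from Lemma \ref{lem:7.1'} together with the chain of identities already recorded in \eqref{rho}. The only real idea is to conjugate by $I\otimes G^{1/2}$ so that the $(I\otimes G)$-geometry becomes ordinary Euclidean geometry, after which the eigenvalue bound is immediate.

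First I would record the factorization linking the various forms of $Z_1$. Since $\widetilde{Z_1}=GZ_1'$ and $\hat{Z_1}=G^{-1/2}\widetilde{Z_1}G^{-1/2}$, we have $\widetilde{Z_1}=G^{1/2}\hat{Z_1}G^{1/2}$. Tensoring with $Z_2$ and pulling the deterministic $G^{1/2}$ factors out through the Kronecker product gives
\begin{equation*}
	Z=Z_2\otimes\widetilde{Z_1}=\left(I\otimes G^{1/2}\right)\left(Z_2\otimes\hat{Z_1}\right)\left(I\otimes G^{1/2}\right),
\end{equation*}
and taking expectations (the $G$-factors are not random) yields $\mathbf{E}\left[Z\right]=\left(I\otimes G^{1/2}\right)\mathbf{E}\left[Z_2\otimes\hat{Z_1}\right]\left(I\otimes G^{1/2}\right)$.

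Next I would substitute $u=\left(I\otimes G^{1/2}\right)y$ into the quadratic form, so that $\left\langle\mathbf{E}\left[Z\right]y,y\right\rangle=u^{\top}\mathbf{E}\left[Z_2\otimes\hat{Z_1}\right]u$. By Lemma \ref{lem:7.1'} both $Z_2$ and $\hat{Z_1}$ are symmetric, hence $Z_2\otimes\hat{Z_1}$ and its expectation are symmetric, and the Rayleigh-quotient bound $x^{\top}Mx\geq\lambda_{\min}(M)\|x\|_2^2$ applies to give $u^{\top}\mathbf{E}\left[Z_2\otimes\hat{Z_1}\right]u\geq\lambda_{\min}\!\left(\mathbf{E}\left[Z_2\otimes\hat{Z_1}\right]\right)\|u\|_2^2$.

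Finally I would close the loop with two identifications. From \eqref{rho} the minimal eigenvalue of $\mathbf{E}\left[Z_2\otimes\hat{Z_1}\right]$ equals $\lambda_{\min}\!\left(\mathbf{E}\left[Z_2\otimes Z_1'\right]\right)=1-\rho$, and $\|u\|_2^2=y^{\top}\left(I\otimes G\right)y=\|y\|_{I\otimes G}^2$. Combining these gives $\left\langle\mathbf{E}\left[Z\right]y,y\right\rangle\geq(1-\rho)\|y\|_{I\otimes G}^2$ for every $y\in\mathbb{R}^{mn}$, as claimed. I do not anticipate a genuine obstacle here; the one point demanding care is the symmetrizing conjugation by $I\otimes G^{1/2}$, which is precisely what turns the weighted norm into the Euclidean norm in which the eigenvalue estimate is clean, and once that change of variables is in place the rest is bookkeeping.
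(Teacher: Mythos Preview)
Your proposal is correct and follows essentially the same approach as the paper: both arguments conjugate $\mathbf{E}[Z]$ by $I\otimes G^{1/2}$ to reduce the $(I\otimes G)$-weighted inequality to a standard eigenvalue bound for the symmetric matrix $\mathbf{E}[Z_2\otimes\hat{Z_1}]$, then invoke \eqref{rho} to identify $\lambda_{\min}\!\left(\mathbf{E}[Z_2\otimes\hat{Z_1}]\right)=1-\rho$. The only cosmetic difference is that the paper phrases the eigenvalue step via the semidefinite characterization $\lambda_{\min}(M)=\max\{t:M\succeq tI\}$ and then pushes the $G^{1/2}$ factors back to obtain $\mathbf{E}[Z]\succeq(1-\rho)(I\otimes G)$, whereas you perform the change of variables $u=(I\otimes G^{1/2})y$ and apply the Rayleigh quotient directly; these are equivalent formulations of the same computation.
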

\begin{proof}
Note that $\mathbf{E}\left[Z_2 \otimes \widetilde{Z_1}\right]$ and $G$ are symmetric positive definite, we get
		\begin{equation*}
		\begin{split}
			1-\rho&=1- \lambda_{\min}\left(\left(I \otimes G^{-\frac{1}{2}}\right)\mathbf{E}\left[Z_2 \otimes \widetilde{Z_1}\right]\left(I \otimes G^{-\frac{1}{2}}\right)\right) \\ & =\underset{t}{max}\left\{t|\left(I \otimes G^{-\frac{1}{2}}\right)\mathbf{E}\left[Z_2 \otimes \widetilde{Z_1}\right]\left(I \otimes G^{-\frac{1}{2}}\right)-tI\succeq 0 \right\}  \\
			& =\underset{t}{max}\left\{t|\mathbf{E}\left[Z_2 \otimes \widetilde{Z_1}\right]-t (I\otimes G)\succeq 0 \right\}.  \\
				\end{split}
	\end{equation*}
			Therefore, $\mathbf{E}\left[Z_2 \otimes \widetilde{Z_1}\right]\succeq I\otimes G$, and the conclusion is obtained.
\end{proof}

\begin{theorem}\label{lem:7.5}
	If $(BP)^{\top} \otimes (S^{\top}A)$ is full column rank, then
	\begin{equation*}
		\mathbf{E}\left[\left\|X^k - X^*\right\|^2_{F(G)}\right] \leq \rho^k \left\|X^0 - X^*\right\|^2_{F(G)},
	\end{equation*}
	where $\rho< 1$ is given in Lemma \ref{lem:7.3}.
\end{theorem}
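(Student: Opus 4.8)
The plan is to derive the stated geometric decay by feeding the full-column-rank hypothesis into the machinery already assembled in Lemmas \ref{lem:7.3}, \ref{lem:7.4} and \ref{lem:2.01}, and then reusing the exact one-step identity computed inside the proof of Theorem \ref{thm:2.1}. First I would translate the hypothesis: by Lemma \ref{lem:2.01}, $(BP)^{\top}\otimes(S^{\top}A)$ having full column rank is equivalent to $\mathbf{E}\left[Z_2\otimes\widetilde{Z_1}\right]$ being symmetric positive definite. This has two immediate payoffs. It makes $\mathbf{E}\left[Z_2\otimes\widetilde{Z_1}\right]$ invertible, so Lemma \ref{lem:7.3} yields $\rho=1-\lambda_{\min}\left(\mathbf{E}\left[Z_2\otimes Z_1'\right]\right)<1$, which is precisely the rate named in the statement; and it verifies the standing hypothesis of Lemma \ref{lem:7.4}.

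Next I would recall the conditional-expectation identity \eqref{eq:2.4} established while proving Theorem \ref{thm:2.1}, namely
$$\mathbf{E}\left[\left\|X^{k+1}-X^*\right\|^2_{F(G)}\right]=\left\|X^k-X^*\right\|^2_{F(G)}-\mathbf{E}\left[\left\|\left(Z_2^{\top}\otimes Z_1'\right)vec\left(X^k-X^*\right)\right\|^2_{I\otimes G}\right].$$
Writing $y=vec\left(X^k-X^*\right)$ and using the relations of Lemma \ref{lem:7.1'}, namely $\left(Z_1'\right)^{\top}G=\widetilde{Z_1}$, $\left(Z_1'\right)^2=Z_1'$ (so $\widetilde{Z_1}Z_1'=GZ_1'=\widetilde{Z_1}$) and $Z_2=Z_2^2=Z_2^{\top}$, the subtracted term collapses to the quadratic form $y^{\top}\mathbf{E}\left[Z_2\otimes\widetilde{Z_1}\right]y$. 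Applying Lemma \ref{lem:7.4}, whose positive-definiteness hypothesis was just confirmed, then gives
$$\mathbf{E}\left[\left\|\left(Z_2^{\top}\otimes Z_1'\right)y\right\|^2_{I\otimes G}\right]=y^{\top}\mathbf{E}\left[Z_2\otimes\widetilde{Z_1}\right]y\ge\left(1-\rho\right)\left\|y\right\|^2_{I\otimes G}=\left(1-\rho\right)\left\|X^k-X^*\right\|^2_{F(G)}.$$

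Substituting this lower bound into the identity produces the one-step contraction $\mathbf{E}\left[\left\|X^{k+1}-X^*\right\|^2_{F(G)}\right]\le\rho\,\mathbf{E}\left[\left\|X^k-X^*\right\|^2_{F(G)}\right]$ after taking full expectation, and iterating over $k$ with $\rho<1$ completes the argument. I expect the only delicate point to be the algebraic reduction of $\left\|\left(Z_2^{\top}\otimes Z_1'\right)y\right\|^2_{I\otimes G}$ to $y^{\top}\mathbf{E}\left[Z_2\otimes\widetilde{Z_1}\right]y$, which requires the idempotent-symmetry identities above applied through the Kronecker mixed-product rule; however this is exactly the computation already performed in Theorem \ref{thm:2.1}, so no genuinely new estimate is needed. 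In essence the whole content of the theorem is the conversion of the generic inequality of Theorem \ref{thm:2.1} into a true geometric rate, the crucial new ingredient being the strict bound $\rho<1$ that the full-column-rank assumption guarantees via Lemmas \ref{lem:2.01} and \ref{lem:7.3}.
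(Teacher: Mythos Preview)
Your proposal is correct and follows essentially the same route as the paper: invoke Lemma~\ref{lem:2.01} to convert the full-column-rank hypothesis into positive definiteness of $\mathbf{E}\left[Z_2\otimes\widetilde{Z_1}\right]$, reduce the one-step error identity to the quadratic form $\left\langle\mathbf{E}\left[Z_2\otimes\widetilde{Z_1}\right]y,y\right\rangle$, bound it below via Lemma~\ref{lem:7.4}, and iterate. The only cosmetic difference is that you import the identity \eqref{eq:2.4} from the proof of Theorem~\ref{thm:2.1}, whereas the paper recomputes the same reduction directly from $r^{k+1}=\left(I-Z_2^{\top}\otimes Z_1'\right)r^k$; the algebra and the invoked lemmas are identical.
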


\begin{proof}
	Since $(BP)^{\top} \otimes (S^{\top}A)$ is full column rank, from Lemma \ref{lem:2.01} we know that  $\mathbf{E}\left[Z_2 \otimes \widetilde{Z_1}\right]$ is symmetric positive definite. Let $r^k = vec\left(X^k-X^*\right)$. From \eqref{eq:2.1.3} we have $r^{k+1} = r^k - \left(Z_2^{\top} \otimes Z_1'\right)r^k$. Taking expectation in $\left\|r^{k+1}\right\|^2_{I\otimes G}$ conditioned on $r^k$ gives
	\begin{equation*}
		\begin{split}
			\mathbf{E}\left[\left\|r^{k+1}\right\|^2_{I\otimes G} | r^k\right] & = \mathbf{E}\left[\left\|\left(I-Z^{\top}_2 \otimes Z_1'\right)r^k \right\|^2_{I\otimes G} | r^k\right] \\
			& = \mathbf{E}\left[\left\langle \left[\left(I \otimes G\right)-\left(Z_2^{\top} \otimes GZ_1'\right)\right]r^k, r^k \right\rangle | r^k\right]  \\
			& = \left\|r^k\right\|^2_{I\otimes G} - \left\langle\mathbf{E}\left[Z_2^{\top} \otimes \widetilde{Z_1}\right]r^k, r^k\right\rangle  \\
			& \leq \rho \left\|r^k\right\|^2_{I\otimes G}  \ \ \left(by\  Lemma\ \ref{lem:7.4}\right).
		\end{split}
	\end{equation*}
	Since $\left\|r^k\right\|^2_{I\otimes G}=\left\|X^k-X^*\right\|^2_{F(G)}$, then the conclusion follows.
	
\end{proof}

\begin{remark}
	When $A$ is full column rank or $x\in Range(A^{\top})$,  the following estimate holds
$	\left\|Ax\right\|_2^2 \ge \sigma_{\min}^2(A)\|x\|_2^2.
$ For one case of the matrix with full column rank, \ref{lem:7.4} give the convergence of the generalized method.
For the other case, some conclusions are given in the following.  For special methods, some results have been obtained (for the GRBK method, see \cite{NYQ2022}). 
\end{remark}
\begin{lemma}[\cite{DK2021}]\label{lem:7.0}Let $A\in \mathbb{R}^{p\times m}$ and $B \in \mathbb{R}^{n\times q}$ be given. Denote
	\begin{equation}\label{set:M}
		\mathcal{M}=\left\{M\in\mathbb{R}^{m\times n} |\exists Y\in\mathbb{R}^{p\times q}\ s.t. \ M=A^{\top}YB^{\top}\right\}.
	\end{equation}
	Then, for any matrix $M\in \mathcal{M}$, it holds
	\begin{equation*}
		\left\|AMB\right\|^2_{F} \geq \sigma_{\min}^2\left(A\right)\sigma_{\min}^2\left(B\right)\left\|M\right\|^2_{F}.
	\end{equation*}
\end{lemma}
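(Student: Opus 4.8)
The plan is to reduce the matrix inequality to a vector inequality via vectorization and then exploit the structural constraint $M \in \mathcal{M}$ to place $\mathrm{vec}(M)$ in the range of the relevant operator's transpose. Using the identity $\mathrm{vec}(AMB) = (B^{\top} \otimes A)\,\mathrm{vec}(M)$ from Lemma \ref{lem:1.1}, we have $\left\|AMB\right\|_F^2 = \left\|(B^{\top} \otimes A)\,\mathrm{vec}(M)\right\|_2^2$ and $\left\|M\right\|_F^2 = \left\|\mathrm{vec}(M)\right\|_2^2$. Writing $K = B^{\top} \otimes A$, the claim reduces to the statement that $\left\|Kx\right\|_2^2 \geq \sigma_{\min}^2(A)\sigma_{\min}^2(B)\left\|x\right\|_2^2$ for the particular vector $x = \mathrm{vec}(M)$.

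Next I would verify that $x = \mathrm{vec}(M)$ lies in $\mathrm{Range}(K^{\top})$. Since $M = A^{\top}YB^{\top}$ for some $Y$, applying the vectorization identity again gives $\mathrm{vec}(M) = (B \otimes A^{\top})\,\mathrm{vec}(Y) = K^{\top}\mathrm{vec}(Y) \in \mathrm{Range}(K^{\top})$. This is precisely the role played by the hypothesis $M \in \mathcal{M}$: it guarantees that the vectorized quantity avoids the null space of $K$, so that the singular-value lower bound restricted to $\mathrm{Range}(K^{\top})$ is available. Concretely, invoking the estimate recorded in the preceding Remark (for $x \in \mathrm{Range}(K^{\top})$, $\left\|Kx\right\|_2^2 \geq \sigma_{\min}^2(K)\left\|x\right\|_2^2$) with $K = B^{\top}\otimes A$, we obtain $\left\|AMB\right\|_F^2 \geq \sigma_{\min}^2(K)\left\|M\right\|_F^2$.

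It then remains to identify the smallest nonzero singular value of $K = B^{\top}\otimes A$. Using $\sigma_i(K)^2 = \lambda_i(K^{\top}K)$ together with $K^{\top}K = (BB^{\top}) \otimes (A^{\top}A)$ (by the mixed-product property in Lemma \ref{lem:1.1}) and the Kronecker spectrum property (Lemma \ref{lem:1.1}, last item), the nonzero singular values of $K$ are exactly the products $\sigma_i(A)\sigma_j(B)$ ranging over nonzero $\sigma_i(A)$ and $\sigma_j(B)$; hence their minimum is $\sigma_{\min}(K) = \sigma_{\min}(A)\sigma_{\min}(B)$, which yields the desired bound.

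The main obstacle, and essentially the only nontrivial point, is this last step: one must argue that the smallest nonzero singular value of the Kronecker product equals the product of the smallest nonzero singular values of the factors, rather than some other admissible product. This becomes immediate once the full multiset of nonzero singular values is written as $\{\sigma_i(A)\sigma_j(B)\}$, since minimizing a product of positive quantities amounts to minimizing each factor. As an alternative that sidesteps Kronecker products, one can argue columnwise: the columns of $M$ lie in $\mathrm{Range}(A^{\top})$, so summing the per-column estimate gives $\left\|AM\right\|_F^2 \geq \sigma_{\min}^2(A)\left\|M\right\|_F^2$; applying the same reasoning to $(AM)^{\top} = M^{\top}A^{\top}$, whose columns lie in $\mathrm{Range}(B)$, produces the factor $\sigma_{\min}^2(B)$ and the full inequality.
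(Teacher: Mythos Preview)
The paper does not supply its own proof of this lemma; it is quoted verbatim from \cite{DK2021} and used as a black box in the proof of Theorem~\ref{theB.2}. So there is no ``paper's proof'' to compare against, and your task reduces to whether your argument is correct. It is.

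Your main route via vectorization is clean and fully rigorous: writing $K=B^{\top}\otimes A$, the identity $\mathrm{vec}(AMB)=K\,\mathrm{vec}(M)$ together with $\mathrm{vec}(M)=K^{\top}\mathrm{vec}(Y)\in\mathrm{Range}(K^{\top})$ lets you invoke the estimate from the preceding Remark, and the identification $\sigma_{\min}(B^{\top}\otimes A)=\sigma_{\min}(A)\sigma_{\min}(B)$ follows from the Kronecker spectrum property in Lemma~\ref{lem:1.1}. The only subtlety you flag --- that the smallest \emph{nonzero} singular value of the Kronecker product equals the product of the smallest nonzero singular values of the factors --- is indeed handled by noting that the nonzero eigenvalues of $(BB^{\top})\otimes(A^{\top}A)$ are exactly the products of nonzero eigenvalues, so minimizing over the product set amounts to minimizing each factor separately.

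Your alternative columnwise argument is also valid and is closer in spirit to how such estimates are typically proved in the matrix-equation literature: from $M=A^{\top}YB^{\top}$ one sees that each column of $M$ lies in $\mathrm{Range}(A^{\top})$ and each column of $(AM)^{\top}=BY^{\top}AA^{\top}$ lies in $\mathrm{Range}(B)$, so two applications of the vector inequality $\|Ax\|_2^2\ge\sigma_{\min}^2(A)\|x\|_2^2$ for $x\in\mathrm{Range}(A^{\top})$ give the result. Either route would be acceptable as a self-contained proof here.
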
	
\begin{lemma}\label{M12}
	Let the two sets $M_1$ and $M_2$ be defined by
	$$M_1=\left\{X_1\in \mathbf{R}^{m\times n} | A^{\top}Y_1B^{\top}=X_1\  \text{ for some } Y_1\in \mathbf{R}^{p\times q}\right\}, M_2=\left\{X_2\in \mathbf{R}^{m\times n} |  A^{\top}AY_2BB^{\top}=X_2 \ \text{  for some } Y_2\in \mathbf{R}^{m\times n}\right\}.$$ Then, it holds $M_1=M_2$.
\end{lemma}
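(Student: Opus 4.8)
The plan is to establish the set equality $M_1 = M_2$ by proving the two inclusions $M_2 \subseteq M_1$ and $M_1 \subseteq M_2$ separately. One inclusion is a one-line substitution, while the other requires exhibiting an explicit preimage built from Moore--Penrose pseudoinverses; the latter is where the real work lies.

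For $M_2 \subseteq M_1$, take any $X_2 = A^{\top}A Y_2 B B^{\top}$ with $Y_2 \in \mathbb{R}^{m\times n}$ and simply set $Y_1 := A Y_2 B \in \mathbb{R}^{p\times q}$. Then $A^{\top}Y_1 B^{\top} = A^{\top}(A Y_2 B) B^{\top} = A^{\top}A Y_2 B B^{\top} = X_2$, so $X_2 \in M_1$. No structural facts about $A$ or $B$ are needed here.

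For the reverse inclusion $M_1 \subseteq M_2$, given $X_1 = A^{\top}Y_1 B^{\top}$ I would propose the candidate
$$ Y_2 := (A^{\top}A)^{\dagger} A^{\top} Y_1 B^{\top} (B B^{\top})^{\dagger} \in \mathbb{R}^{m\times n}, $$
and then verify $A^{\top}A Y_2 B B^{\top} = X_1$. Substituting, the expression factors as $\big(A^{\top}A (A^{\top}A)^{\dagger} A^{\top}\big) Y_1 \big(B^{\top} (B B^{\top})^{\dagger} B B^{\top}\big)$, so the claim reduces to the two pseudoinverse absorption identities $A^{\top}A (A^{\top}A)^{\dagger} A^{\top} = A^{\top}$ and $B^{\top} (B B^{\top})^{\dagger} B B^{\top} = B^{\top}$. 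I expect verifying these to be the main obstacle, though each is standard: $A^{\top}A (A^{\top}A)^{\dagger}$ is the orthogonal projector onto $Range(A^{\top}A) = Range(A^{\top})$, whose action fixes the columns of $A^{\top}$; dually, $(B B^{\top})^{\dagger} B B^{\top}$ is the projector onto $Range(B B^{\top}) = Range(B)$, and transposing shows it fixes $B^{\top}$ when applied on the right. Together these give $A^{\top}A Y_2 B B^{\top} = A^{\top}Y_1 B^{\top} = X_1$, hence $X_1 \in M_2$.

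An alternative route is to vectorize: by Lemma \ref{lem:1.1}, $M_1$ and $M_2$ correspond to $Range(B \otimes A^{\top})$ and $Range(B B^{\top} \otimes A^{\top}A)$ respectively, and since $Range(A^{\top}A) = Range(A^{\top})$ and $Range(B B^{\top}) = Range(B)$, the Kronecker identity $Range(M \otimes N) = Range(M) \otimes Range(N)$ collapses both sides to $Range(B) \otimes Range(A^{\top})$, yielding the equality at once. I would keep the direct construction as the primary argument, since it avoids invoking the subspace tensor-product identity and makes the required pseudoinverse properties fully explicit.
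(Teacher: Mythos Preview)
Your proof is correct and follows essentially the same two-inclusion strategy as the paper: the easy direction $M_2\subseteq M_1$ is identical, and for $M_1\subseteq M_2$ both you and the paper construct an explicit preimage via pseudoinverses---your $Y_2=(A^\top A)^\dagger A^\top Y_1 B^\top (BB^\top)^\dagger$ coincides with the paper's $W_2=A^\dagger(A^\dagger)^\top X_1 (B^\dagger)^\top B^\dagger$ once one uses $(A^\top A)^\dagger A^\top=A^\dagger$ and $(BB^\top)^\dagger=(B^\dagger)^\top B^\dagger$. Your verification through the projector identities $A^\top A(A^\top A)^\dagger A^\top=A^\top$ and $B^\top(BB^\top)^\dagger BB^\top=B^\top$ is arguably more direct than the paper's route of expanding $(A^\top)^\dagger$ and $(B^\top)^\dagger$, but the underlying argument is the same.
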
	
\begin{proof}
	See Appendix for more details.
\end{proof}	
\begin{theorem}\label{theB.2}
	Assume that $Z_1'$ and $Z_2$ are independent random variables. Let $$
	\mathcal{\bar{M}}=\left\{\bar{M}\in\mathbb{R}^{m\times n} |\exists Y\in\mathbb{R}^{p\times q}\ s.t. \ \bar{M}=\mathbf{E}\left[ Z_1'\right]  Y\mathbf{E}\left[ Z_2\right]\right\}.
	$$  If $(X^{k}-X^*)\in \mathcal{\bar{M}},$
	then for every $X^*\in \mathbb{R}{^{m\times n}}$ satisfying $AX^*B=C$, we have
	\begin{equation*}
		\mathbf{E}\left[ \left\|X^{k}-X^*\right\|_{F(G)}^2 \right] \leq \rho_{\sigma}^k \left\|X^{0}-X^*\right\|_{F(G)}^2.
	\end{equation*}
	with $\rho_{\sigma}=1-\sigma^2_{\min}\left( \mathbf{E}\left[ Z_2 \otimes Z_1' \right] \right) <1$.
	Therefore, the iteration sequence generated by \eqref{eq:2.1.4} converges to $X^*$.
\end{theorem}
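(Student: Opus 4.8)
The plan is to recycle the exact one-step energy identity already obtained inside the proof of Theorem \ref{thm:2.1}, and then to replace its eigenvalue-based lower bound by a singular-value bound that is valid only on the subspace $\bar{\mathcal{M}}$. I would carry out the argument in the Frobenius geometry ($G=I$), which is the setting of \eqref{eq:2.1.4} referenced in the statement and the one in which Lemma \ref{lem:7.0} is phrased; there $Z_1'$ reduces to the symmetric projection $Z_1$. Writing $r^k=vec(X^k-X^*)$, the computation of Theorem \ref{thm:2.1}, after taking the expectation conditioned on $X^k$, gives
\[
\mathbf{E}\!\left[\|X^{k+1}-X^*\|_F^2\mid X^k\right]=\|X^k-X^*\|_F^2-\mathbf{E}\!\left[\left\|\left(Z_2^{\top}\otimes Z_1\right)r^k\right\|_2^2\right].
\]
The whole task is therefore to bound the subtracted term from below by $(1-\rho_{\sigma})\|X^k-X^*\|_F^2$.

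For the key steps I would proceed as follows. First, since the norm comes from an inner product, Jensen's inequality (nonnegativity of variance) yields $\mathbf{E}[\|(Z_2^{\top}\otimes Z_1)r^k\|_2^2]\ge\|\mathbf{E}[Z_2^{\top}\otimes Z_1]\,r^k\|_2^2$; using the assumed independence of $Z_1$ and $Z_2$ together with $Z_2^{\top}=Z_2$, this equals $\|(\mathbf{E}[Z_2]\otimes\mathbf{E}[Z_1])r^k\|_2^2$, which by Lemma \ref{lem:1.1} un-vectorizes to $\|\mathbf{E}[Z_1](X^k-X^*)\mathbf{E}[Z_2]\|_F^2$. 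Second — and this is the heart of the argument — I would invoke Lemma \ref{lem:7.0} with the roles of $A$ and $B$ played by the \emph{symmetric} matrices $\mathbf{E}[Z_1]$ and $\mathbf{E}[Z_2]$: because $\mathbf{E}[Z_1]^{\top}=\mathbf{E}[Z_1]$ and $\mathbf{E}[Z_2]^{\top}=\mathbf{E}[Z_2]$, the associated set $\{\mathbf{E}[Z_1]\,Y\,\mathbf{E}[Z_2]\}$ is exactly $\bar{\mathcal{M}}$, so the hypothesis $X^k-X^*\in\bar{\mathcal{M}}$ places us precisely in the regime covered by Lemma \ref{lem:7.0}, giving $\|\mathbf{E}[Z_1](X^k-X^*)\mathbf{E}[Z_2]\|_F^2\ge\sigma_{\min}^2(\mathbf{E}[Z_1])\,\sigma_{\min}^2(\mathbf{E}[Z_2])\,\|X^k-X^*\|_F^2$. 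Finally, using that the nonzero singular values of a Kronecker product are products of the nonzero singular values of the factors, and independence once more to write $\mathbf{E}[Z_2\otimes Z_1]=\mathbf{E}[Z_2]\otimes\mathbf{E}[Z_1]$, the constant $\sigma_{\min}^2(\mathbf{E}[Z_1])\sigma_{\min}^2(\mathbf{E}[Z_2])$ collapses to $\sigma_{\min}^2(\mathbf{E}[Z_2\otimes Z_1])=1-\rho_{\sigma}$. Chaining these yields the one-step contraction $\mathbf{E}[\|X^{k+1}-X^*\|_F^2\mid X^k]\le\rho_{\sigma}\|X^k-X^*\|_F^2$; here $\rho_{\sigma}<1$ is automatic since $\sigma_{\min}$ is the smallest \emph{nonzero} singular value, and $\rho_{\sigma}\ge0$ since the relevant expectation matrix has spectrum in $[0,1]$ (cf. Theorem \ref{lem:7.2}).

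The main obstacle is not the single step but the passage to the geometric rate $\rho_{\sigma}^k$: taking full expectations and iterating requires $X^j-X^*\in\bar{\mathcal{M}}$ at \emph{every} step $j$, whereas the statement literally posits it only at step $k$. I would close this gap by showing that membership propagates. The increment $Z_1(X^k-X^*)Z_2$ has the form $A^{\top}(\,\cdot\,)B^{\top}$, hence lies in $\mathcal{M}=\{A^{\top}YB^{\top}\}$; since $X^{k+1}-X^*$ is $X^k-X^*$ minus this increment and $\mathcal{M}$ is a subspace, $\mathcal{M}$ is invariant under the iteration \eqref{eq:2.1.4}. It then remains to identify $\bar{\mathcal{M}}$ with $\mathcal{M}$, which is where Lemma \ref{M12} enters (it rewrites $\mathcal{M}$ through $A^{\top}A(\cdot)BB^{\top}$), together with $Range(\mathbf{E}[Z_1])=Range(A^{\top})$ and $Range(\mathbf{E}[Z_2])=Range(B)$ under sufficiently rich sampling of $S$ and $P$. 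With $\bar{\mathcal{M}}=\mathcal{M}$ and invariance in hand, an initialization $X^0-X^*\in\bar{\mathcal{M}}$ (e.g. $X^0=0$ with $X^*$ the minimal-norm solution) keeps the error in $\bar{\mathcal{M}}$ throughout, so the one-step bound may be chained to give $\mathbf{E}[\|X^k-X^*\|_F^2]\le\rho_{\sigma}^k\|X^0-X^*\|_F^2$. I expect verifying this invariance and the identification $\bar{\mathcal{M}}=\mathcal{M}$ to be the most delicate part of the write-up.
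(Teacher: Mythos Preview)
Your proposal is correct and follows essentially the same route as the paper: both start from the one-step identity of Theorem~\ref{thm:2.1}, apply the variance inequality $\mathbf{E}[\|\cdot\|^2]\ge\|\mathbf{E}[\cdot]\|^2$ together with independence to factor $\mathbf{E}[Z_2\otimes Z_1']$, un-vectorize, invoke Lemma~\ref{lem:7.0} on $\bar{\mathcal{M}}$, and collapse the constant via the Kronecker singular-value identity. The only differences are cosmetic---the paper carries a general $G$ throughout (working with $\hat Z_1=G^{-1/2}\widetilde Z_1G^{-1/2}$ and $\hat R^k=G^{1/2}(X^k-X^*)$) rather than specializing to $G=I$, and it defers exactly the invariance/propagation issue you flag to the Remark following the theorem instead of treating it inside the proof.
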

\begin{proof}
	From Lemma \ref{lem:7.1'}, we can get $Z_2^{T} \otimes \hat{Z_1}=\left(Z_2^{T} \otimes \hat{Z_1}\right)^{T}$, $\left(Z_2^{T} \otimes \hat{Z_1}\right)^2=Z_2^{T} \otimes \hat{Z_1}$ . Thus, for every $y\in R^{mn}$, we have
	\begin{equation*}
		\begin{split}
			y^{T}\mathbf{E}\left[\left(Z_2^{T} \otimes \hat{Z_1}  \right)\right]y&=
			y^{T}\mathbf{E}\left[\left(Z_2^{T} \otimes   \hat{Z_1}\right)^{\top}\left(Z_2^{T} \otimes \hat{Z_1} \right)\right]y\\
			&\geq y^{T}\mathbf{E}\left[\left(Z_2^{T}\otimes \hat{Z_1}  \right)\right]^{\top}\mathbf{E}\left[\left(Z_2^{T} \otimes \hat{Z_1}  \right)\right]y.
		\end{split}
	\end{equation*}
	
	Using \eqref{eq:2.1.10}, we can get
	\begin{equation}\label{eq:2.51}
		\begin{split}
			\mathbf{E}\left[ \left\| \left(Z_2^{\top} \otimes Z_1'  \right) \text{vec} \left(X^{k}-X^*\right) \right\|_{I\otimes G}^2 \right] & \geq \left\|\mathbf{E}\left[\left(Z_2^{T} \otimes \hat{Z_1}  \right)\right](I\otimes G^{1/2}) \text{vec} \left(X^{k}-X^*\right)\right\|_2^2\\
			&= \left\|\mathbf{E}\left[\hat{Z_1} \right]\hat{R^k} \mathbf{E}\left[ Z_2\right] \right\|_F^2 \\
			&\geq \sigma_{\min}^2 \left(\mathbf{E}\left[ Z_2\right]\right) \sigma_{\min}^2 \left(\mathbf{E}\left[ \hat{Z_1}\right]\right)\left\|\hat{R^k} \right\|_F^2 \\
			&= \sigma^2_{\min}\left( \mathbf{E}\left[ Z_2 \otimes Z_1' \right] \right)\left\|X^{k}-X^*\right\|_{F(G)}^2,
		\end{split}
	\end{equation}
	where $\hat{R^k}=G^{1/2}(X^{k}-X^*)$. The first inequality is obtained by $ \mathbf{E}\left[\left\|X-\mathbf{E}\left[X\right]\right\|^{2}\right]=\mathbf{E}\left[\left\|X\right\|^{2}\right]-\left\|\mathbf{E}\left[X\right]\right\|^{2}$. From $(X^{k}-X^*)\in \mathcal{\bar{M}}$, $\hat{Z_1}=\hat{Z_1}^T$ and  $Z_2^T=Z_2$, it yields  $$\hat{R^k}\in
	\mathcal{\hat{M}}=\left\{\hat{M}\in\mathbb{R}^{m\times n} |\exists Y\in\mathbb{R}^{p\times q}\ s.t. \ \hat{M}=\mathbf{E}\left[ \hat{Z}_1\right]^T  Y\mathbf{E}\left[ (Z_2)\right]^T\right\}.
	$$It is easy to know that $vec(\hat{R^k})=(I\otimes G^{1/2})\text{vec}\left(X^{k}-X^*\right)$,  $\left\|\hat{R^k}\right\|_F^2 = \left\| X^{k}-X^* \right\|_{F(G)}^2 $.
	Then, with the use of Lemma \ref{lem:7.0} the second inequality holds. Since $\sigma_{\min} \left(\mathbf{E}\left[ \hat{Z}_1\right]\right)=\sigma_{\min} \left(\mathbf{E}\left[ Z_1'\right]\right)$, we have $\sigma_{\min}^2 \left(\mathbf{E}\left[ Z_2\right]\right) \sigma_{\min}^2 \left(\mathbf{E}\left[ \hat{Z_1}\right]\right)=\sigma^2_{\min}\left( \mathbf{E}\left[ Z_2 \otimes Z_1' \right] \right).$ 
	With the similar process of Theorem \ref{thm:2.1}, the conclusion is obtained.
\end{proof}

\begin{remark}
It is easy to prove that $(X^{k}-X^*)\in \mathcal{\bar{M}}$ is equivalent to $(I\otimes G^{1/2}) \text{vec} \left(X^{k}-X^*\right)\in Range\left(\mathbf{E}\left[\left(Z_2^{T} \otimes \hat{Z_1}  \right)\right]\right)$.	In the global randomized Kaczmarz method,
$\mathbf{E}\left[ Z_1'\right]=\mathbf{E}\left[ \hat{Z}_1\right]=\frac{ A ^{\top} A }{\left\|A \right\|^2_F}$, $\mathbf{E}\left[ Z_2\right]=\frac{BB^T }{\left\|B \right\|^2_F}$, it has
$$\mathcal{\bar{M}}=\left\{\bar{M}\in\mathbb{R}^{m\times n} |\exists Y\in\mathbb{R}^{p\times q}\ s.t. \ \bar{M}=A^T A Y BB^T\right\}.$$
Obviously, $\mathcal{\bar{M}}$ is well defined because $X^{0}=O\in \mathcal{\bar{M}}$ and $A^{\dagger}CB^{\dagger} \in \mathcal{\bar{M}}$. Meanwhile, it is easy to verify that	$(X^{k}-X^*)\in \mathcal{M}$ in \eqref{set:M} with the iteration scheme \eqref{eq:2.1.4} for the global randomized Kaczmarz method. Then it results in $(X^{k}-X^*)\in \mathcal{\bar{M}}$ from Lemma \ref{M12}.
\end{remark}
	
	%
	

\subsection{Convergence with convenient probabilities}

\begin{definition} [\cite{GRM}]
Let the random matrix $S,P$ be discrete distributions. $(S,P)$ will be called a complete discrete sampling pair if $S=S_i \in \mathbb{R}^{p\times \tau_{1i}}$ with probability $p^1_i>0$, where $S_i^{\top}A$ has full row rank and $\tau_{1i} \in \mathbb{N}$ for $i = 1,...,r
_1$.  ${\bf S}=:\left[S_1,...,S_{r_1}\right]\in \mathbb{R}^{p\times \Sigma^{r_1}_{i=1}\tau_{1i}}$ is such that $A^{\top}{\bf S}$ has full row rank. Meanwhile, 
 $P=P_i \in \mathbb{R}^{q\times \tau_{2i}}$ with probability $p^2_i>0$, where $P_i^{\top}B^{\top}$ has full row rank and $\tau_{2i} \in \mathbb{N}$ for $i = 1,...,r_2$. ${\bf P}=:\left[P_1,...,P_{r_2}\right]\in \mathbb{R}^{q\times \Sigma^{r_2}_{i=1}\tau_{2i}}$ is such that $B{\bf P}$ has full row rank.
\end{definition}

Assume that $S, P$ is a complete discrete sampling pair, then $S^{\top}A$ and $(BP)^{\top}$ have full row rank and
\begin{eqnarray*}
	\left( S^{\top}AA^{\top}S \right)^{\dagger} &=& \left( S^{\top}AA^{\top}S \right)^{-1} ,\\
	\left( P^{\top}B^{\top}BP \right)^{\dagger} &=& \left( P^{\top}B^{\top}BP \right)^{-1} .
\end{eqnarray*}
Therefore we replace the pseudoinverse in \eqref{eq:2.1.4} by the inverse. 
Define
\begin{equation}\label{eq:2.6}
	D_S=\text{diag} \left( \sqrt{p^1_1}\left( \left( S_1 \right)^{\top}AA^{\top}S_1 \right)^{-1/2},..., \sqrt{p^1_{r_1}}\left( \left( S_{r_1} \right)^{\top}AA^{\top}S_{r_1} \right)^{-1/2} \right),
\end{equation}
\begin{equation}\label{eq:2.8}
	D_P=\text{diag} \left( \sqrt{p^2_1}\left( \left( P_1 \right)^{\top}B^{\top}BP_1 \right)^{-1/2},\ ...\ , \sqrt{p^2_{r_2}}\left( \left( P_{r_2} \right)^{\top}B^{\top}BP_{r_2} \right)^{-1/2} \right),
\end{equation}
where $D_S$ and $D_P$ are block diagonal matrices, and are well defined and invertible, as $S_i^{\top}A$ has full row rank for $i=1,...,r_1$ and $P_j^{\top}B^{\top}$ has full row rank for $j=1,...,r_2$.
Taking the expectation of $Z_1$ and $Z_2$, we get
\begin{equation}\label{eq:2.7}
	\begin{split}
		\mathbf{E}\left[ Z_1 \right] & = \sum\limits_{i=1}^{r_1} A^{\top}S_i\left( S_i^{\top}AA^{\top}S_i \right)^{-1}S_i^{\top}Ap^1_i \\
		& = A^{\top} \left( \sum\limits_{i=1}^{r_1} S_i\sqrt{p^1_i}\left( S_i^{\top}AA^{\top}S_i \right)^{-1/2}\left( S_i^{\top}AA^{\top}S_i \right)^{-1/2} \sqrt{p^1_i}S_i^{\top} \right)A \\
		& = \left( A^{\top}{\bf S}D_S \right) \left( D_S{\bf S^{\top}}A \right),
	\end{split}
\end{equation}
and
\begin{equation}\label{eq:2.9}
	\begin{split}
		\mathbf{E}\left[ Z_2 \right] & = \sum\limits_{j=1}^{r_2} BP_j\left( P_j^{\top}B^{\top}BP_j \right)^{-1}P_j^{\top}B^{\top}p^2_j \\
		& = B \left( \sum\limits_{j=1}^{r_2} P_j\sqrt{p^2_j}\left( P_j^{\top}B^{\top}BP_j \right)^{-1/2}\left( P_j^{\top}B^{\top}BP_j \right)^{-1/2} \sqrt{p^2_j}P_j^{\top} \right)B^{\top} \\
		& = \left( B{\bf P}D_P \right) \left( D_P{\bf P}^{\top}B^{\top} \right).
	\end{split}
\end{equation}
Since $A^{\top} {\bf S}$ and $B {\bf P}$ have full row rank, and $D_S,\ D_P$ are invertible, we can get $\mathbf{E} \left[ Z_i \right], i=1,2$ are symmetric positive definite.
So, complete discrete sampling pairs guarantee the convergence of the resulting methods.

Next we develop a choice of probability distribution that yields a convergence rate that is easy to interpret. 
This result is new
and covers a wide range of methods, including randomized Kaczmarz method and randomized coordinate descent method, as well as their block variants.
However, it is more general and covers many other possible particular algorithms, which arise by choosing two particular sets of sample matrices $S_i$ for $i=1,...,r_1$ and $P_j$ for $j=1,...,r_2$.

\begin{theorem}\label{thm:2.2}
	Assume that $(S,P)$ is a complete discrete sampling pair with the following probabilities, respectively,
	\begin{equation}\label{eq:2.10}
		p^1_i=\frac{Tr\left( S_i^{\top}AA^{\top}S_i \right)}{\left\| A^{\top}{\bf S} \right\|^2_F}\ \ \text{for}\ \ i=1,...,r_1,\ 	p^2_j=\frac{Tr\left( P_j^{\top}B^{\top}BP_j \right)}{\left\| B{\bf P} \right\|^2_F}\ \ \text{for}\ \ j=1,...,r_2,
	\end{equation}
		Then the formula \eqref{eq:2.1.4} satisfies
	\begin{equation}\label{eq:2.12}
		\mathbf{E} \left[ \left\| X^k-X^* \right\|^2_F \right]\leq \rho^k \left\| X^0-X^* \right\|^2_F,
	\end{equation}
	where
	\begin{equation}\label{eq:2.13}
		\rho = 1-\frac{\lambda_{\min}\left( {\bf S}^{\top}AA^{\top}{\bf S} \right) \lambda_{\min}\left( {\bf P}^{\top}B^{\top}B{\bf P} \right) }{\left\| A^{\top}{\bf S}\right\|^2_F  \left\| B{\bf P} \right\|^2_F}.
	\end{equation}
\end{theorem}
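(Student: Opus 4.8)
The plan is to obtain the rate in \eqref{eq:2.13} as an upper bound on the generic rate $\rho = 1 - \lambda_{\min}(\mathbf{E}[Z_2 \otimes Z_1])$ supplied by Theorem \ref{thm:2.1} (here $G=I$, so $Z_1'=Z_1$), and then invoke that theorem to conclude \eqref{eq:2.12}. Since $S$ and $P$ are sampled independently, $Z_1$ and $Z_2$ are independent, so $\mathbf{E}[Z_2 \otimes Z_1] = \mathbf{E}[Z_2] \otimes \mathbf{E}[Z_1]$. Both factors are symmetric positive definite (as established just before the theorem from the complete-discrete-sampling hypotheses), hence by the spectral rule for Kronecker products in Lemma \ref{lem:1.1},
$$\lambda_{\min}\left(\mathbf{E}[Z_2 \otimes Z_1]\right) = \lambda_{\min}\left(\mathbf{E}[Z_1]\right)\cdot\lambda_{\min}\left(\mathbf{E}[Z_2]\right).$$
Thus it suffices to bound each factor separately, and by symmetry I treat only $\mathbf{E}[Z_1]$.

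First I would use the closed form $\mathbf{E}[Z_1] = (A^\top \mathbf{S} D_S)(D_S \mathbf{S}^\top A) = A^\top \mathbf{S} D_S^2 \mathbf{S}^\top A$ from \eqref{eq:2.7}, where $D_S^2 = \text{diag}\left(p_i^1 (S_i^\top AA^\top S_i)^{-1}\right)$ is block diagonal and positive definite. The idea is to extract a scalar lower bound $D_S^2 \succeq c\,I$ and push it through the congruence by $\mathbf{S}^\top A$: from $D_S^2 \succeq cI$ one gets $\mathbf{E}[Z_1] = A^\top \mathbf{S} D_S^2 \mathbf{S}^\top A \succeq c\,A^\top \mathbf{S}\mathbf{S}^\top A$, whence $\lambda_{\min}(\mathbf{E}[Z_1]) \geq c\,\lambda_{\min}(A^\top \mathbf{S}\mathbf{S}^\top A)$. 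Because $A^\top \mathbf{S}$ has full row rank, $A^\top \mathbf{S}\mathbf{S}^\top A$ is positive definite and shares its (all nonzero) spectrum with $\mathbf{S}^\top AA^\top \mathbf{S}$, so $\lambda_{\min}(A^\top \mathbf{S}\mathbf{S}^\top A) = \lambda_{\min}(\mathbf{S}^\top AA^\top \mathbf{S})$ in the smallest-nonzero sense of the statement.

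The crux is to produce the constant $c = 1/\|A^\top \mathbf{S}\|_F^2$, and this is exactly where the convenient probabilities \eqref{eq:2.10} are used. Since $D_S^2$ is block diagonal, $\lambda_{\min}(D_S^2) = \min_i p_i^1/\lambda_{\max}(S_i^\top AA^\top S_i)$; substituting $p_i^1 = Tr(S_i^\top AA^\top S_i)/\|A^\top \mathbf{S}\|_F^2$ and using the elementary bound $Tr(M) \geq \lambda_{\max}(M)$, valid for the symmetric positive definite $M = S_i^\top AA^\top S_i$, gives termwise
$$\frac{p_i^1}{\lambda_{\max}(S_i^\top AA^\top S_i)} = \frac{Tr(S_i^\top AA^\top S_i)}{\|A^\top \mathbf{S}\|_F^2\,\lambda_{\max}(S_i^\top AA^\top S_i)} \geq \frac{1}{\|A^\top \mathbf{S}\|_F^2},$$
so $\lambda_{\min}(D_S^2) \geq 1/\|A^\top \mathbf{S}\|_F^2$ as desired (recall $\|A^\top \mathbf{S}\|_F^2 = \sum_i Tr(S_i^\top AA^\top S_i)$). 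The identical argument applied to $\mathbf{E}[Z_2]$ via \eqref{eq:2.9}, \eqref{eq:2.8} and the probabilities $p_j^2$ yields $\lambda_{\min}(\mathbf{E}[Z_2]) \geq \lambda_{\min}(\mathbf{P}^\top B^\top B\mathbf{P})/\|B\mathbf{P}\|_F^2$.

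Multiplying the two bounds and subtracting from $1$ shows that the generic rate $\rho$ of Theorem \ref{thm:2.1} is at most the quantity in \eqref{eq:2.13}; since both lie in $[0,1]$, replacing the true rate by the larger stated one only weakens the geometric decay, so \eqref{eq:2.12} follows directly from Theorem \ref{thm:2.1}. I expect the main obstacle to be the bookkeeping around the ``smallest nonzero'' convention: one must confirm that the full-row-rank hypotheses make $\mathbf{E}[Z_1]$ and $\mathbf{E}[Z_2]$ genuinely positive definite, so that both the Kronecker spectral identity and the identification $\lambda_{\min}(A^\top \mathbf{S}\mathbf{S}^\top A) = \lambda_{\min}(\mathbf{S}^\top AA^\top \mathbf{S})$ are legitimate; the remainder is the routine trace-versus-top-eigenvalue estimate enabled by the convenient probabilities.
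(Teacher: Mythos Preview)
Your proposal is correct and follows essentially the same approach as the paper: both compute $\mathbf{E}[Z_1]=A^{\top}\mathbf{S}D_S^2\mathbf{S}^{\top}A$ via \eqref{eq:2.7}, obtain $\lambda_{\min}(D_S^2)\geq 1/\|A^{\top}\mathbf{S}\|_F^2$ from the convenient probabilities and the bound $Tr(M)\geq\lambda_{\max}(M)$, separate the Kronecker expectation into the product $\mathbf{E}[Z_2]\otimes\mathbf{E}[Z_1]$, and then feed the resulting lower bound on $\lambda_{\min}(\mathbf{E}[Z_2\otimes Z_1])$ into Theorem~\ref{thm:2.1}. The only cosmetic difference is that you extract the scalar via the Loewner-order congruence $D_S^2\succeq cI\Rightarrow A^{\top}\mathbf{S}D_S^2\mathbf{S}^{\top}A\succeq c\,A^{\top}\mathbf{S}\mathbf{S}^{\top}A$, whereas the paper first cycles $\lambda_{\min}(A^{\top}\mathbf{S}D_S^2\mathbf{S}^{\top}A)=\lambda_{\min}(\mathbf{S}^{\top}AA^{\top}\mathbf{S}\,D_S^2)$ and then uses a product bound on the smallest eigenvalue; both routes yield the identical inequality.
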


\begin{proof}
	Let $u_i=Tr\left( \left(S_i\right)^{\top}AA^{\top}S_i \right)$, $v_j=Tr\left( \left(P_j\right)^{\top}B^{\top}BP_j \right)$. Taking \eqref{eq:2.10} into \eqref{eq:2.6} and \eqref{eq:2.8}, respectively, we have
	\begin{eqnarray*}
		D_S^2 &=& \frac{1}{\left\| A^{\top}S \right\|_F^2}\text{diag} \left( u_1\left( S_1^{\top}AA^{\top}S_1 \right)^{-1},...,u_{r_1}\left( S_{r_1}^{\top}AA^{\top}S_{r_1} \right)^{-1} \right),\\
		D_P^2 &=& \frac{1}{\left\| BP \right\|_F^2}\text{diag} \left( v_1\left( P_1^{\top}B^{\top}BP_1 \right)^{-1},...,v_{r_2}\left( P_{r_2}^{\top}B^{\top}BP_{r_2} \right)^{-1} \right).
	\end{eqnarray*}
	and thus
	\begin{eqnarray}\label{eq:2.14}
		\lambda_{\min} \left(D_S^2 \right) &=& \frac{1}{\left\| A^{\top}{\bf S} \right\|_F^2} \min\limits_{1\leq i \leq r_1} \left\{ \frac{u_i}{\lambda_{\max}\left( S_i^{\top}AA^{\top}S_i \right)} \right\} \geq \frac{1}{\left\| A^{\top}{\bf S} \right\|_F^2},\\
		\lambda_{\min} \left(D_P^2 \right) &=& \frac{1}{\left\| B{\bf P} \right\|_F^2} \min\limits_{1\leq j \leq r_2} \left\{ \frac{v_j}{\lambda_{\max}\left( P_j^{\top}B^{\top}BP_j \right)} \right\} \geq \frac{1}{\left\| B{\bf P} \right\|_F^2}.
	\end{eqnarray}
	Using the fact that for arbitrary matrices $M$, $N$ of appropriate sizes, $\lambda_{\min} \left(MN\right)=\lambda_{\min} \left(NM\right)$, hence
	\begin{equation*}
		\begin{split}
			\lambda_{\min} \left( \mathbf{E}\left[Z_1\right] \right) & = \lambda_{\min} \left( A^{\top}{\bf S}D_S^2{\bf S}^{\top}A\right) \\
			& = \lambda_{\min} \left( {\bf S}^{\top}AA^{\top}{\bf S}D_S^2 \right).
		\end{split}
	\end{equation*}
	Then, using the fact that $\lambda_{\min} \left(MN\right)=\lambda_{\min} \left(M\right) \lambda_{\min} \left(N\right)$ for $M,N\in \mathbb{R}^{n\times n}$ are symmetric positive definite, from \eqref{eq:2.14} we can obtain
	\begin{equation*}
		\begin{split}
			\lambda_{\min} \left( \mathbf{E}\left[Z_1\right] \right) & = \lambda_{\min} \left( {\bf S}^{\top}AA^{\top}{\bf S}D_S^2 \right) \\
			& \geq \lambda_{\min} \left( {\bf S}^{\top}AA^{\top} {\bf S}\right)\lambda_{\min} \left( D_S^2 \right) \\
			& \geq \frac{\lambda_{\min} \left( {\bf S}^{\top}AA^{\top}{\bf S} \right)}{\left\| A^{\top}{\bf S} \right\|_F^2}.
		\end{split}
	\end{equation*}
	Similarly, we can also obtain
	\begin{equation*}
		\begin{split}
			\lambda_{\min} \left( \mathbf{E}\left[Z_2\right] \right) & =  \lambda_{\min} \left( B{\bf P}D_P^2{\bf P}^{\top}B^{\top}\right)\\
			& = \lambda_{\min} \left({\bf P}^{\top}B^{\top}B{\bf P}D_P^2\right) \\
			& \geq \lambda_{\min} \left({\bf P}^{\top}B^{\top}B{\bf P}\right) \lambda_{\min} \left(D_P^2\right)\\
			& \geq \frac{\lambda_{\min} \left({\bf P}^{\top}B^{\top}B{\bf P}\right)}{\left\| B{\bf P} \right\|_F^2}.
		\end{split}
	\end{equation*}
	Since
	\begin{equation*}
		\begin{split}
			\rho_c & = \lambda_{\min}\left( \mathbf{E}\left[ Z_2^{\top}\otimes Z_1 \right] \right) \\
			& =  \lambda_{\min}\left( \mathbf{E}\left[ Z_2^{\top} \right]\otimes \mathbf{E}\left[ Z_1 \right] \right) \\
			& \geq \lambda_{\min} \left( \mathbf{E}\left[ Z_2 \right] \right) \lambda_{\min} \left(\mathbf{E}\left[ Z_1 \right]\right)\\
			& \geq \frac{\lambda_{\min} \left( {\bf S}^{\top}AA^{\top}{\bf S} \right) \lambda_{\min} \left({\bf P}^{\top}B^{\top}B{\bf P}\right)}{\left\| A^{\top}{\bf S} \right\|_F^2 \left\| B{\bf P} \right\|_F^2}.
		\end{split}
	\end{equation*}
	Hence, by Theorem \ref{thm:2.1} we have
	\begin{equation*}
		\begin{split}
			\mathbf{E} \left[ \left\| X^k-X^* \right\|^2_F \right] & \leq \left( 1-\rho_c \right) \left\| X^{k-1}-X^* \right\|^2_F \\
			& \leq \left( 1- \frac{\lambda_{\min} \left( {\bf S}^{\top}AA^{\top}{\bf S} \right) \lambda_{\min} \left({\bf P}^{\top}B^{\top}B{\bf P}\right)}{\left\| A^{\top}{\bf S} \right\|_F^2 \left\| B{\bf P} \right\|_F^2}\right) \left\| X^{k-1}-X^* \right\|^2_F \\
			& = \rho \left\| X^{k-1}-X^* \right\|^2_F,
		\end{split}
	\end{equation*}
	where $\rho=\left( 1- \frac{\lambda_{\min} \left( {\bf S}^{\top}AA^{\top}{\bf S} \right) \lambda_{\min} \left({\bf P}^{\top}B^{\top}B{\bf P}\right)}{\left\| A^{\top}{\bf S} \right\|_F^2 \left\| B{\bf P} \right\|_F^2}\right)$. Finally, taking full expectation and by induction, we can get
	\begin{equation*}
		\mathbf{E} \left[ \left\| X^k-X^* \right\|^2_F \right] \leq \rho^k \left\| X^{0}-X^* \right\|^2_F.
	\end{equation*}
	Obviously, the coefficient $\rho$ satisfies $0\leq \rho <1$, so the method is convergent.
\end{proof}

\section{Special cases: Examples}
\label{sec:5}

In this section we briefly mention how by selecting the parameters $S$ and $P$ of our method we recover several existing methods. Furthermore, we propose some similar methods based on discrete sampling pairs. The list is by no means comprehensive and merely serves the purpose of an illustration of the flexibility of our algorithm. 

\subsection{Global randomized Kaczmarz method}
\label{subsec:3.1}
If we choose $S_i=e_i$ (the unit coordinate vector in $\mathbb{R}^{p}$) and $P_j=e_j$ (the unit coordinate vector in $\mathbb{R}^{q}$), in view of \eqref{eq:1.3}, this results in
\begin{equation*}
	X^{k+1} = \arg\min\limits_{X \in\mathbb{R}{^{m\times n}}} \frac{1}{2}\left\| X-X^k\right\|_{F}^2 \ \  \text{subject to} \ \ A_{i,:}XB_{:,j}=C_{ij}.
\end{equation*}
With the use of \eqref{eq:2.1.4},  the iteration can be calculated with
\begin{equation*}
	X^{k+1}=X^k-\frac{\left( A_{i,:} \right)^{\top} \left( A_{i,:}X^kB_{:,j}-C_{ij} \right) \left( B_{:,j} \right)^{\top}}{\left\|A_{i,:} \right\|^2_2 \left\| B_{:,j} \right\|^2_2 }.
\end{equation*}
This is recovered global randomized Kaczmarz (GRK) method.

When $i, j$ are selected at random, this is the global randomized Kaczmarz method. Applying Theorem \ref{thm:2.2}, we see the probability distributions $p_i=\frac{\left\|A_{i,:} \right\|^2_2}{\left\|A \right\|^2_F} , p_j=\frac{\left\|B_{:,j} \right\|^2_2}{\left\|B\right\|^2_F}$ result in a convergence with
\begin{equation}\label{eq:3.1.1}
	\mathbf{E}\left[ \left\| X^k-X^* \right\|^2_F \right] \leq \left( 1- \frac{\lambda_{\min}\left( A^{\top}A\right)\lambda_{\min}\left( BB^{\top}\right)}{\left\| A\right\|_F^2\left\|B\right\|_F^2} \right)^k \left\| X^0-X^* \right\|^2_F.
\end{equation}
About details of another convergence proof of the GRK method, we refer the reader to references \cite{NYQ2022}. We also provide new convergence results which based on the convergence of the norm of the expected error. Applying Theorem \ref{thm:2} to the GRK method gives
\begin{equation}\label{eq:3.1.2}
	\left\|\mathbf{E}\left[ X^k-X^* \right]\right\|^2_F \leq \left( 1- \frac{\lambda_{\min}\left( A^{\top}A\right)\lambda_{\min}\left( BB^{\top}\right)}{\left\| A\right\|_F^2\left\|B\right\|_F^2} \right)^{2k} \left\| X^0-X^* \right\|^2_F.
\end{equation}
Thought the expectation is moved inside the norm, which is weaker form of convergence. We can find that the convergence rate appears squared, which means it is a better rate. Similar results for the convergence of the norm of the expected error holds for all the methods we present, and we will not repeat to illustrate this in following methods.

\subsection{Global randomized block Kaczmarz method}
Our framework also extends to block formulations of the global randomized Kaczmarz method. Let $\tau_1$ be a random subset of $\left[p\right]$, and $S=I_{:,\tau_1}$ be a column concatenation of the columns of the $p\times p$ identity matrix I indexed by $\tau_1$. Similarly, let $\tau_2$ be a random subset of $\left[q\right]$, and $P=I_{:,\tau_2}$ be a column concatenation of the columns of the $q\times q$ identity matrix I indexed by $\tau_2$. Then \eqref{eq:1.4} specializes to
\begin{equation*}
  X^{k+1} = \arg\min\limits_{X \in\mathbb{R}{^{m\times n}}} \frac{1}{2}\left\| X-X^k\right\|_{F}^2 \ \  \text{subject to} \ \ A_{\tau_1,:}XB_{:,\tau_2}=C_{\tau_1,\tau_2}.
\end{equation*}
In view of \eqref{eq:2.1.4}, this can be equivalently written as
\begin{equation}\label{eq:3.2.1}
  \begin{split}
    X^{k+1} =&  X^k+ \left(A_{\tau_1,:}\right)^{\top}\left( A_{\tau_1,:} \left( A_{\tau_1,:} \right)^{\top}\right)^{\dagger} \left( C_{\tau_1,\tau_2} - A_{\tau_1,:}X^kB_{:,\tau_2} \right)\\
    &\left( \left( B_{:,\tau_2}\right)^{\top} B_{:,\tau_2} \right)^{\dagger}\left( B_{:,\tau_2}\right)^{\top}\\
       =& X^k+  A_{\tau_1,:}^{\dagger} \left( C_{\tau_1,\tau_2}- A_{\tau_1,:}X^kB_{:,\tau_2}\right)  B_{:,\tau_2}^{\dagger}.
  \end{split}
\end{equation}
This is recovered the global randomized block Kaczmarz (GRBK) method in \cite{NYQ2022}.

\begin{remark}\label{rem:1}
Now, let the sizes of block index sets be $\left|I_k\right|=1$ and $\left|J_k\right|=q$, i.e., parameter matrices $S=e_i \in \mathbb{R}^{p}$ and $P=I \in \mathbb{R}^{q\times q}$. In this case, the index $i_k \in [p]$ is selected according to a probability distribution $\mathbb{P}\left(i_k \right)=\frac{\left\|A_{i_k,:}\right\|^2_2}{\left\|A\right\|_F^2}$. Then, the update \eqref{eq:3.2.1} becomes
\begin{equation*}
  X^{k+1} = X^k+ \frac{A_{i_k,:}^{\top}\left(C_{i_k,:}- A_{i_k,:}X^{k}B\right)B^{\dag}}{\left\|A_{i_k,:}\right\|^2_2},
\end{equation*}
which is called the randomized Kaczmarz method of matrix A (RK-A). Assume that $B$ has full row rank. Using Theorem \ref{thm:2.2}, we get the convergence rate in the expectation of the form
\begin{equation}\label{eq:3.2.2}
  \mathbf{E}\left[ \left\| X^k-X^* \right\|^2_F \right] \leq \left( 1- \frac{\lambda_{\min}\left(A^{\top}A \right)}{\left\|A\right\|^2_F} \right)^k \left\| X^0-X^* \right\|^2_F.
\end{equation}
\end{remark}

\begin{remark}\label{rem:2}
Similar to the RK-A method, with the block index sets size $\left|I_k\right|=p$ and $\left|J_k\right|=1$, the index $j_k \in [q]$ is selected according to a probability distribution $\mathbb{P}\left(j_k \right)=\frac{\left\|B_{:,j_k}\right\|^2_2}{\left\|B\right\|_F^2}$, we have the randomized Kaczmarz method of matrix B (RK-B) update as follows
\begin{equation*}
  X^{k+1} = X^k+ \frac{A^{\dag}\left(C_{:,j_k}- AX^{k}B_{:,j_k}\right)B_{:,j_k}^{\top}}{\left\|B_{:,j_k}\right\|^2_2}.
\end{equation*}
Suppose that $A$ is full column rank. Also, we get the convergence rate in the expectation of the form
\begin{equation}\label{eq:3.2.3}
  \mathbf{E}\left[ \left\| X^k-X^* \right\|^2_F \right] \leq \left( 1- \frac{\lambda_{\min}\left(BB^{\top}\right)}{\left\|B\right\|^2_F} \right)^k \left\| X^0-X^* \right\|^2_F.
\end{equation}
\end{remark}

Comparing \eqref{eq:3.2.2} and \eqref{eq:3.2.3} with the convergence rate \eqref{eq:3.1.1}, we find the convergence factors of the RK-A and RK-B methods are smaller than that of the GRK method.

\subsection{Randomized coordinate descent method}
In this subsection, by choosing different parameters $P, S, G $, we induce two randomized coordinate descent algorithms. In the following two cases, we assume that $B$ has full row rank.
\subsubsection{Positive definite case}
\label{ssubsec:3.3.1}
If $A$ is symmetric positive definite, then we can choose $G = A$, $P=I$ and $S = e_i$ in  \eqref{eq:1.3} and obtain
\begin{equation*}
X^{k+1} = \arg\min\limits_{X \in\mathbb{R}{^{m\times n}}}\frac{1}{2} \left\| X-X^k\right\|_{A}^2 \ \ \text{subject to} \ \ \left(e_i\right)^{\top}AXB=\left(e_i\right)^{\top}C.
\end{equation*}
where we use the symmetry of $A$ to get $\left(e_i\right)^{\top} A = A_{i,:} = \left(A_{:,i}\right)^{\top}$. The solution to the above, given by \eqref{eq:2.1.3}, is
\begin{equation*}
  X^{k+1}=X^{k}-e_i\frac{\left(\left(A_{:,i}\right)^{\top}X^kB-C_{:,i}\right)\left(B^{\top}B\right)^{\dag}B^{\top}}{A_{i,i}}.
\end{equation*}

When $i$ is chosen randomly, this is the randomized coordinate descent (CD-pd) method.
Applying Theorem \ref{thm:2.2} with $\rho_c=\lambda_{\min} \left(\mathbf{E}\left[ Z_2 \otimes Z_1'\right] \right) $, we see the probability distribution $p_i = \frac{A_{ii}}{Tr\left(A\right)}$ results in a convergence with
\begin{equation*}
\mathbf{E}\left[ \left\| X^k-X^* \right\|_{A}^2 \right] \leq \left( 1- \frac{\lambda_{\min}\left( A\right)}{Tr\left( A\right)} \right)^k \left\| X^0-X^* \right\|^2_A.
\end{equation*}

\subsubsection{Least-squares version}
\label{ssubsec:3.3.2}
By choosing $S=Ae_i=A_{:,i}$ as the $i$th column of $A$, $P=I$ and $G=A^{\top}A$, the resulting iterative formula \eqref{eq:2.1.3} is given by
\begin{equation}\label{eq:3.4}
  X^{k+1}=X^{k}-e_i\frac{\left(A_{:,i}\right)^{\top}\left(AX^kB-C\right)\left(B^{\top}B\right)^{\dag}B^{\top}}{\left\|A_{:,i}\right\|^2_2}.
\end{equation}
When $i$ is selected at random, this is the randomized coordinate descent (RCD) method applied to the least-squares problem $\min\limits_{X \in\mathbb{R}{^{m\times n}}}\left\| AXB-C\right\|_F^2$. A similar result was established by Kui Du et. al. \cite{DK2021}.

Applying Theorem \ref{thm:2.2}, we see that selecting $i$ with probability proportional to the magnitude of column $i$ of $A$, that is, $p_i = \frac{\left\|A_{:,i}\right\|_2^2}{\left\|A\right\|_F^2}$, results in a convergence with
\begin{equation*}
\mathbf{E}\left[ \left\| X^k-X^* \right\|_{A^{\top}A}^2 \right] \leq \left( 1- \frac{\lambda_{\min}\left( A^{\top}A\right)}{\left\| A\right\|_F^2} \right)^k \left\| X^0-X^* \right\|^2_{A^{\top}A}.
\end{equation*}

\subsection{Variants: Gaussian sampling}

In this section, we shall develop a variant of our method. When parameter matrices $S$ and $P$ are Gaussian vectors with mean $0\in \mathbb{R}^{p}, 0\in \mathbb{R}^{q}$ and positive definite covariance matrices $ \Sigma_1 \in \mathbb{R}^{p\times p}, \Sigma_2 \in \mathbb{R}^{q\times q} $, respectively. That is, $ S=\zeta \sim N\left( 0,\Sigma_1 \right), P=\eta \sim N\left( 0,\Sigma_2 \right) $. When they are applied to \eqref{eq:2.1.4}, the iterative formula becomes
\begin{equation}\label{eq:4.1}
  X^{k+1}=X^k+\frac{A^{\top}\zeta \left( \zeta^{\top}C \eta-\zeta^{\top}AX^{k}B \eta \right)\eta^{\top}B^{\top} }{\left\| \zeta^{\top}A \right\|^2_2 \left\| B\eta \right\|^2_2}.
\end{equation}
Unlike the discrete methods in Section \ref{sec:3}, to calculate an iteration of \eqref{eq:4.1} we need to compute the product of a matrix with a dense vector.  However, in our numeric tests in Section 5, the faster convergence of the Gaussian method often pays off for its high iteration cost.

Before analyzing the convergence, we introduce some lemmas.
\begin{lemma}[\cite{GRM}]\label{lem:4.1} Let $D\in \mathbb{R}^{n\times n}$ be a positive definite diagonal matrix and $U\in \mathbb{R}^{n\times n}$ be an orthogonal matrix, and $\Omega = UDU^{\top}$. If $u \sim N\left( 0,D \right)$ and $\xi \sim N\left( 0,\Omega \right)$, then
\begin{equation}\label{eq:4.2}
  \mathbf{E} \left[ \frac{\xi\xi^{\top}}{\xi^{\top} \xi} \right]=U\mathbf{E}\left[ \frac{uu^{\top}}{u^{\top}u} \right]U^{\top},
\end{equation}
and
\begin{equation}\label{eq:4.3}
  \mathbf{E} \left[ \frac{\xi\xi^{\top}}{\xi^{\top} \xi} \right] \succeq \frac{2}{\pi}\frac{\Omega}{Tr\left( \Omega \right)}.
\end{equation}
\end{lemma}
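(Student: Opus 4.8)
The plan is to prove the identity \eqref{eq:4.2} first by a linear change of variables, and then to deduce the semidefinite bound \eqref{eq:4.3} by orthogonally reducing to a diagonal covariance and, ultimately, to a one-dimensional Gaussian inequality. First I would establish \eqref{eq:4.2}. Since $u \sim N(0,D)$ and $U$ is orthogonal, the image $Uu$ has mean $0$ and covariance $UDU^\top = \Omega$, so $\xi$ and $Uu$ share the same law. Substituting $\xi = Uu$ and using $U^\top U = I$ gives
\[
\frac{\xi\xi^\top}{\xi^\top\xi} = \frac{(Uu)(Uu)^\top}{(Uu)^\top(Uu)} = \frac{U\,uu^\top\,U^\top}{u^\top u},
\]
and taking expectations while pulling the constant matrices $U, U^\top$ outside yields \eqref{eq:4.2} at once.

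Next, for \eqref{eq:4.3} I would reduce to the diagonal case. Writing $M := \mathbf{E}[uu^\top/u^\top u]$ and noting $Tr(\Omega) = Tr(UDU^\top) = Tr(D)$, identity \eqref{eq:4.2} turns \eqref{eq:4.3} into $U M U^\top \succeq \frac{2}{\pi\,Tr(D)}\,UDU^\top$; since congruence by the orthogonal $U$ preserves the Loewner order, this is equivalent to
\[
M \succeq \frac{2}{\pi\,Tr(D)}\,D.
\]
Because the coordinates $u_1,\dots,u_n$ are independent and each symmetric about $0$, the sign flip $u_i \mapsto -u_i$ shows every off-diagonal entry $\mathbf{E}[u_iu_j/\|u\|_2^2]$ vanishes, so $M$ is diagonal. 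As the target $\frac{2}{\pi Tr(D)}D$ is also diagonal, the Loewner inequality collapses to the $n$ scalar inequalities
\[
\mathbf{E}\!\left[\frac{u_i^2}{\|u\|_2^2}\right] \ge \frac{2}{\pi}\,\frac{d_i}{Tr(D)}, \qquad i = 1,\dots,n.
\]

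To prove each scalar inequality I would split $\|u\|_2^2 = u_i^2 + R_i$ with $R_i := \sum_{k\neq i} u_k^2$, exploiting that $u_i$ and $R_i$ are independent. Conditioning on $R_i=c$ and writing $u_i^2 = d_i g^2$ with $g \sim N(0,1)$ reduces the estimate to the one-dimensional bound
\[
\mathbf{E}\!\left[\frac{d_i g^2}{d_i g^2 + c}\right] \ge \frac{2}{\pi}\,\frac{d_i}{d_i + c} \qquad (c \ge 0),
\]
after which I would integrate over $R_i$ and invoke Jensen's inequality for the convex map $c \mapsto d_i/(d_i+c)$, together with $\mathbf{E}[R_i] = Tr(D) - d_i$, to replace $c$ by its mean and recover exactly $\frac{2}{\pi}\,d_i/Tr(D)$. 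The one-dimensional bound itself I would obtain from the representation $1/(a+c) = \int_0^\infty e^{-(a+c)s}\,ds$ combined with $\mathbf{E}[e^{-sg^2}] = (1+2s)^{-1/2}$, which turns $\mathbf{E}[g^2/(g^2+\beta)]$ into an explicit expression involving the complementary error function.

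The hard part will be this last one-dimensional inequality with the precise constant $2/\pi$: after the error-function reduction it becomes a transcendental inequality in the single parameter $\beta = c/d_i$, namely $(1+\beta)\,\mathbf{E}[g^2/(g^2+\beta)] \ge 2/\pi$, whose verification needs a genuine calculus argument, locating the interior minimum of the left-hand side over $\beta \in [0,\infty)$ and checking that its value exceeds $2/\pi$, rather than an algebraic manipulation. Everything else, the change of variables, the orthogonal reduction, the vanishing of the off-diagonals, and the Jensen step, is routine once this scalar estimate is in hand.
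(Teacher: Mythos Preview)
The paper does not give its own proof of this lemma: it is quoted verbatim from \cite{GRM} with the citation in the lemma header, and no argument for it appears either in the main text or in the appendix. So there is no ``paper's proof'' to compare your proposal against.

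As for the proposal itself, the orthogonal change of variables for \eqref{eq:4.2}, the reduction to the diagonal case, and the vanishing of off-diagonals by sign symmetry are all standard and correct, and they coincide with the route taken in the cited source. The place where your plan departs from \cite{GRM} is the scalar step: you condition on $R_i=\sum_{k\ne i}u_k^2$, aim for the one-dimensional bound $(1+\beta)\,\mathbf{E}[g^2/(g^2+\beta)]\ge 2/\pi$, and then apply Jensen in $R_i$. That chain is logically sound provided the one-dimensional inequality holds, and numerical checks (e.g.\ $\beta=1$ gives roughly $0.69>2/\pi$, and both endpoints give $1$) support it; but, as you say, its verification is a genuine calculus exercise involving the Mills ratio $R(x)=e^{x^2}\int_x^\infty e^{-t^2}\,dt$, and you have not actually carried it out. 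Until that step is written down, the proposal is an outline rather than a proof. If you want to avoid this transcendental inequality, the argument in \cite{GRM} handles the diagonal case differently and does not pass through a fixed-$c$ conditioning; consulting it may give you a cleaner endgame.
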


\begin{lemma}[\cite{Golub2014}]\label{lem:4.2} If $A, A+B \in \mathbf{R}^{n\times n}$ are symmetric matrices, we shall use $\lambda_k \left(A\right)$ to designate the $k$th largest eigenvalue, i.e.,
\begin{equation*}
  \lambda_n \left(A\right) \leq \cdots \leq \lambda_2 \left(A\right) \leq \lambda_1 \left(A\right),
\end{equation*}
then we have
\begin{equation}\label{eq:4.4}
  \lambda_k\left( A \right)+\lambda_n\left(B \right) \leq \lambda_k\left( A+B \right) \leq \lambda_k\left( A \right)+\lambda_1\left( B \right), \ \ k=1, 2, \cdots, n.
\end{equation}
\end{lemma}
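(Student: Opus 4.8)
The plan is to recognize \eqref{eq:4.4} as the classical Weyl monotonicity inequality for eigenvalues of sums of symmetric matrices and to derive both bounds from a single variational principle, namely the Courant--Fischer characterization. With the decreasing ordering $\lambda_1(A)\geq\cdots\geq\lambda_n(A)$ fixed in the statement, I would record the two equivalent forms
\begin{equation*}
	\lambda_k(A) = \max_{\dim V = k}\ \min_{0 \neq x \in V} \frac{x^{\top}Ax}{x^{\top}x} = \min_{\dim W = n-k+1}\ \max_{0 \neq x \in W} \frac{x^{\top}Ax}{x^{\top}x},
\end{equation*}
where $V,W$ range over subspaces of $\mathbb{R}^{n}$ of the indicated dimensions. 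These follow from diagonalizing the symmetric matrix by an orthogonal change of basis together with a dimension-counting argument, and they are the only structural input the proof needs.

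For the upper bound I would fix the $(n-k+1)$-dimensional subspace $W^{\ast}$ attaining the minimum in the min-max form for $A$, so that $\max_{0\neq x\in W^{\ast}}(x^{\top}Ax)/(x^{\top}x)=\lambda_k(A)$. Restricting the min-max characterization of $A+B$ to this particular $W^{\ast}$ can only increase the value, and subadditivity of the maximum over a common subspace then gives
\begin{equation*}
	\lambda_k(A+B) \leq \max_{0\neq x\in W^{\ast}}\frac{x^{\top}(A+B)x}{x^{\top}x} \leq \max_{0\neq x\in W^{\ast}}\frac{x^{\top}Ax}{x^{\top}x} + \max_{0\neq x\in W^{\ast}}\frac{x^{\top}Bx}{x^{\top}x}.
\end{equation*}
The first maximum on the right equals $\lambda_k(A)$ by the choice of $W^{\ast}$, while the second is dominated by the global maximum $\lambda_1(B)$ of the Rayleigh quotient of $B$, yielding $\lambda_k(A+B)\leq\lambda_k(A)+\lambda_1(B)$.

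For the lower bound I would avoid a second independent argument and instead apply the upper bound already established to the pair $(A+B,\,-B)$, using the decomposition $A=(A+B)+(-B)$. Since reversing the sign of a symmetric matrix reverses its spectrum, $\lambda_1(-B)=-\lambda_n(B)$, and the upper bound gives $\lambda_k(A)\leq\lambda_k(A+B)+\lambda_1(-B)=\lambda_k(A+B)-\lambda_n(B)$; rearranging produces $\lambda_k(A)+\lambda_n(B)\leq\lambda_k(A+B)$, which is the remaining half of \eqref{eq:4.4}. The only delicate point is bookkeeping of the eigenvalue ordering---pairing the max-min versus min-max form with the correct subspace dimension and invoking $\lambda_1(-B)=-\lambda_n(B)$ consistently---so the main obstacle is notational rather than conceptual, with the Courant--Fischer principle doing all of the substantive work.
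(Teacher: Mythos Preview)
Your proof is correct: this is the standard derivation of Weyl's inequality via the Courant--Fischer min--max principle, and the bookkeeping (choosing the optimal $(n-k+1)$-dimensional subspace for the upper bound, then applying the upper bound to $(A+B,-B)$ and using $\lambda_1(-B)=-\lambda_n(B)$ for the lower bound) is carried out cleanly.

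There is nothing to compare against, however, because the paper does not prove this lemma at all. It is stated with the citation \cite{Golub2014} and used as a black box (specifically in the proof of Lemma~\ref{lem:4.3}, to deduce $\lambda_{\min}(A_2)\geq\lambda_{\min}(A_1)$ from the positive semidefiniteness of $A_2-A_1$). So your argument supplies a proof where the paper simply quotes the result from a standard reference.
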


\begin{lemma}\label{lem:4.3}
Let $A_1,\ A_2,\ B_1,\ B_2 \in \mathbf{R}^{n\times n}$ be symmetric positive semi-definite matrices. If $A_2-A_1,\ B_2-B_1$ are also symmetric positive semi-definite matrices, we have
\begin{equation}\label{eq:4.5}
  \lambda_{\min}\left( A_2^{\top} \otimes B_2 \right) \geq \lambda_{\min} \left( A_1^{\top} \otimes B_1 \right).
\end{equation}
\end{lemma}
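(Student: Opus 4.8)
The plan is to reduce everything to the spectra of the individual matrices and then combine two monotonicity facts. First I would note that since each matrix is symmetric, $A_i^{\top}=A_i$, so the objects under comparison are really $\lambda_{\min}(A_2\otimes B_2)$ and $\lambda_{\min}(A_1\otimes B_1)$. By Lemma \ref{lem:1.1} the spectrum of a Kronecker product $A\otimes B$ consists exactly of the pairwise products $\lambda_i(A)\mu_j(B)$ of eigenvalues. Because $A_i,B_i$ are symmetric positive semi-definite, all of these eigenvalues are nonnegative, and for nonnegative numbers the smallest pairwise product is attained at the two smallest factors. Hence I would establish the key identity
\begin{equation*}
	\lambda_{\min}(A\otimes B)=\lambda_{\min}(A)\,\lambda_{\min}(B)\qquad\text{whenever }A,B\succeq 0 .
\end{equation*}

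Next I would invoke the positive-semidefinite difference hypotheses to compare the smallest eigenvalues factor by factor. Writing $A_2=A_1+(A_2-A_1)$ with $A_2-A_1\succeq 0$ and applying Lemma \ref{lem:4.2} (with $\lambda_n$ denoting the smallest eigenvalue) gives $\lambda_{\min}(A_1)+\lambda_{\min}(A_2-A_1)\le\lambda_{\min}(A_2)$, and since $\lambda_{\min}(A_2-A_1)\ge 0$ this yields $0\le\lambda_{\min}(A_1)\le\lambda_{\min}(A_2)$. The identical argument applied to $B_2=B_1+(B_2-B_1)$ gives $0\le\lambda_{\min}(B_1)\le\lambda_{\min}(B_2)$.

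Finally I would combine the two steps. Using the identity above on both sides and the fact that the product of two pairs of nonnegative reals is monotone in each factor,
\begin{equation*}
	\lambda_{\min}(A_2^{\top}\otimes B_2)=\lambda_{\min}(A_2)\,\lambda_{\min}(B_2)\ge\lambda_{\min}(A_1)\,\lambda_{\min}(B_1)=\lambda_{\min}(A_1^{\top}\otimes B_1),
\end{equation*}
which is the claim. The one place that genuinely requires care, and which I would flag as the main obstacle, is the identity $\lambda_{\min}(A\otimes B)=\lambda_{\min}(A)\lambda_{\min}(B)$: it is \emph{not} true for indefinite $A,B$ (a negative factor could make some product smaller than the product of minima), so the positive semi-definiteness of $A_i,B_i$ must be used precisely here to guarantee that the minimum over the products $\lambda_i\mu_j$ is $\lambda_{\min}(A)\lambda_{\min}(B)$ and that this quantity is monotone under the ordering established in the second step.
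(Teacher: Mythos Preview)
Your proof is correct and follows essentially the same route as the paper: both reduce the Kronecker inequality to the factor-wise identity $\lambda_{\min}(A\otimes B)=\lambda_{\min}(A)\lambda_{\min}(B)$ for positive semi-definite $A,B$, then use Weyl's inequality (Lemma~\ref{lem:4.2}) on the decompositions $A_2=A_1+(A_2-A_1)$ and $B_2=B_1+(B_2-B_1)$ to obtain $\lambda_{\min}(A_1)\le\lambda_{\min}(A_2)$ and $\lambda_{\min}(B_1)\le\lambda_{\min}(B_2)$. The only cosmetic difference is that the paper applies the upper bound in Lemma~\ref{lem:4.2} to $A_2+(-A_1)$ while you apply the lower bound to $A_1+(A_2-A_1)$; both yield the same inequality.
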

\begin{proof}
See Appendix for more details.
\end{proof}

To analyze the complexity of the resulting method, let $\mu = A^{\top}S , \nu= BP$ which are also Gaussian, distributed as $ \mu \sim N\left( 0,\Omega_1 \right), \nu \sim N\left( 0,\Omega_2 \right)$, with $\Omega_1 = A^{\top}\Sigma_1 A, \Omega_2 = B\Sigma_2 B^{\top}$. In this section, we assume that $A$ has full column rank and $B$ has full row rank so that $\Omega_1$ and $\Omega_2$ are always positive definite.

\begin{theorem}\label{thm:4.1}
Let $\mu = A^{\top}S , \nu= BP$ distributed as $ \mu \sim N\left( 0,\ \Omega_1 \right)$ and $\nu \sim N\left( 0,\Omega_2 \right)$. Then the iterative scheme \eqref{eq:4.1} satisfies
\begin{equation*}
\mathbf{E}\left[ \left\| X^k-X^* \right\|_{F}^2 \right] \leq  \rho^{k}\left\| X^0-X^* \right\|^2_{F},
\end{equation*}
where
\begin{equation*}
\rho = 1- \frac{4}{\pi^2 Tr\left(\Omega_2 \right)Tr\left(\Omega_1 \right)}\cdot \lambda_{\min} \left( \Omega_2^{\top} \otimes  \Omega_1 \right).
\end{equation*}
\end{theorem}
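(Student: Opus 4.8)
The plan is to apply the general rate formula from Theorem~\ref{thm:2.1} (or its Frobenius-norm specialization) and reduce everything to estimating $\lambda_{\min}\left(\mathbf{E}\left[Z_2 \otimes Z_1'\right]\right)$ from below. Here $G=I$, so $Z_1' = Z_1 = A^\top S (S^\top A A^\top S)^\dag S^\top A$ and $Z_2 = BP(P^\top B^\top B P)^\dag P^\top B^\top$. Since $S,P$ are single Gaussian vectors $\zeta,\eta$, the matrices $S^\top A A^\top S$ and $P^\top B^\top B P$ are scalars, so with $\mu = A^\top\zeta$ and $\nu = B\eta$ the building blocks collapse to rank-one projections: $Z_1 = \dfrac{\mu\mu^\top}{\mu^\top\mu}$ and $Z_2 = \dfrac{\nu\nu^\top}{\nu^\top\nu}$. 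First I would record this reduction explicitly so that the problem becomes one of bounding $\lambda_{\min}\bigl(\mathbf{E}\bigl[\tfrac{\nu\nu^\top}{\nu^\top\nu}\bigr] \otimes \mathbf{E}\bigl[\tfrac{\mu\mu^\top}{\mu^\top\mu}\bigr]\bigr)$, using that $S$ and $P$ (hence $\mu$ and $\nu$) are independent so the expectation of the Kronecker product factors.

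Second, I would invoke Lemma~\ref{lem:4.1} separately for each factor. Since $\mu \sim N(0,\Omega_1)$ and $\nu \sim N(0,\Omega_2)$ with $\Omega_1,\Omega_2$ positive definite, equation~\eqref{eq:4.3} gives
\begin{equation*}
  \mathbf{E}\left[\frac{\mu\mu^\top}{\mu^\top\mu}\right] \succeq \frac{2}{\pi}\frac{\Omega_1}{Tr(\Omega_1)}, \qquad \mathbf{E}\left[\frac{\nu\nu^\top}{\nu^\top\nu}\right] \succeq \frac{2}{\pi}\frac{\Omega_2}{Tr(\Omega_2)}.
\end{equation*}
Both expectation matrices are symmetric positive semi-definite (in fact positive definite, since $\Omega_1,\Omega_2$ are), and the differences against the scaled covariances are positive semi-definite by the lemma. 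This is exactly the hypothesis needed for Lemma~\ref{lem:4.3}.

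Third, I would apply Lemma~\ref{lem:4.3} with $A_1 = \frac{2}{\pi}\frac{\Omega_2}{Tr(\Omega_2)}$, $A_2 = \mathbf{E}[Z_2]$, $B_1 = \frac{2}{\pi}\frac{\Omega_1}{Tr(\Omega_1)}$, $B_2 = \mathbf{E}[Z_1]$ to obtain
\begin{equation*}
  \lambda_{\min}\bigl(\mathbf{E}[Z_2]^\top \otimes \mathbf{E}[Z_1]\bigr) \geq \lambda_{\min}\left(\frac{2}{\pi}\frac{\Omega_2^\top}{Tr(\Omega_2)} \otimes \frac{2}{\pi}\frac{\Omega_1}{Tr(\Omega_1)}\right) = \frac{4}{\pi^2 Tr(\Omega_1)Tr(\Omega_2)}\lambda_{\min}\bigl(\Omega_2^\top \otimes \Omega_1\bigr),
\end{equation*}
pulling the positive scalar out of the Kronecker product. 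Combining with $\rho = 1 - \lambda_{\min}(\mathbf{E}[Z_2 \otimes Z_1'])$ and the factorization of the expectation over the independent samples yields the claimed rate. I would then close by noting $0 \le \rho < 1$ so Theorem~\ref{thm:2.1} delivers the geometric decay.

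\emph{The main obstacle} is making the reduction to rank-one projections airtight and justifying that the two Gaussian-matrix lemmas compose correctly: I must verify that $\mathbf{E}[Z_1\otimes Z_2]$ genuinely factors as $\mathbf{E}[Z_2]\otimes\mathbf{E}[Z_1]$ (requiring the independence of $S$ and $P$), and that Lemma~\ref{lem:4.3} applies to the \emph{expectation} matrices rather than the random ones—i.e.\ that taking expectations preserves the semi-definite ordering from Lemma~\ref{lem:4.1}, which it does by linearity and monotonicity of $\mathbf{E}$ on the positive semi-definite cone. The index bookkeeping between $Z_2^\top \otimes Z_1$ and $Z_2 \otimes Z_1'$ (recall $Z_2^\top = Z_2$ and $Z_1' = Z_1$ here) also needs care to match the statement's ordering.
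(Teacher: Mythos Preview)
Your proposal is correct and follows essentially the same route as the paper's own proof: reduce $Z_1$ and $Z_2$ to the rank-one projections $\mu\mu^\top/\|\mu\|_2^2$ and $\nu\nu^\top/\|\nu\|_2^2$, factor the expectation of the Kronecker product by independence of $S$ and $P$, apply Lemma~\ref{lem:4.1} to each factor, and then invoke Lemma~\ref{lem:4.3} to pass from the semi-definite ordering to the minimum-eigenvalue bound. The bookkeeping you flag ($Z_2^\top=Z_2$, $Z_1'=Z_1$ since $G=I$) is exactly what the paper uses implicitly.
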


\begin{proof}
The complexity of the method can be established through
\begin{equation}\label{eq:4.8}
  \begin{split}
    \rho & = 1- \lambda_{\min}\left( \mathbf{E}\left[ Z_2 \otimes Z_1 \right]  \right) \\
      & = 1- \lambda_{\min}\left( \mathbf{E}\left[ \frac{\nu \nu^{\top}}{\left\|\nu \right\|^2_2}\otimes \frac{\mu \mu^{\top}}{\left\|\mu \right\|^2_2} \right] \right) \\
      & = 1- \lambda_{\min}\left( \mathbf{E}\left[ \frac{\nu \nu^{\top}}{\left\|\nu \right\|^2_2} \right] \otimes \mathbf{E}\left[ \frac{\mu \mu^{\top}}{\left\|\mu \right\|^2_2} \right] \right)
  \end{split}
\end{equation}
Using Lemma \ref{lem:4.1}, we have
$$  \mathbf{E}\left[ \frac{\mu \mu^{\top}}{\left\|\mu\right\|^2_2} \right]  \succeq  \frac{2}{\pi} \frac{\Omega_1}{Tr\left( \Omega_1 \right)}, \ 
  \mathbf{E}\left[ \frac{\nu \nu^{\top}}{\left\|\nu \right\|^2_2} \right]  \succeq  \frac{2}{\pi} \frac{\Omega_2}{Tr\left( \Omega_2 \right)}. $$
Then, combining Lemma \ref{lem:4.3}, Equation \eqref{eq:4.8} can be written as
\begin{equation*}
  \begin{split}
    \rho & = 1- \lambda_{\min}\left( \mathbf{E}\left[ \frac{\nu \nu^{\top}}{\left\|\nu \right\|^2_2} \right] \otimes \mathbf{E}\left[ \frac{\mu \mu^{\top}}{\left\|\mu \right\|^2_2} \right] \right) \\
      & \leq 1- \lambda_{\min}\left(  \frac{2}{\pi} \frac{\Omega_2^{\top}}{Tr\left( \Omega_2 \right)} \otimes \frac{2}{\pi} \frac{\Omega_1}{Tr\left( \Omega_1 \right)} \right) \\
      & = 1- \frac{4}{\pi^2 Tr\left(\Omega_2 \right)Tr\left(\Omega_1 \right)}\cdot \lambda_{\min} \left( \Omega_2^{\top} \otimes  \Omega_1 \right).
  \end{split}
\end{equation*}
Lemma \ref{lem:4.1} depicts that $\mathbf{E}\left[ \frac{\mu \mu^{\top}}{\left\|\mu\right\|^2_2} \right],\mathbf{E}\left[ \frac{\nu \nu^{\top}}{\left\|\nu \right\|^2_2} \right]$ are positive definite, and thus the expected norm of the error of Gaussian method converges exponentially to zero.
\end{proof}

\begin{remark}\label{rem:3}
Choosing $\Sigma_1 = \Sigma_2 = I$ so that $ S=\zeta \sim N\left( 0, I \right), P=\eta \sim N\left( 0, I \right) $, then we obtain the Gaussian global random Kaczmarz (GaussGRK) method.
 Let parameter matrices $S=\zeta \sim N\left( 0,I \right)$ and $P=I \in \mathbb{R}^{q\times q}$. Then, the update \eqref{eq:4.1} becomes
\begin{equation*}
  X^{k+1}=X^k+\frac{A^{\top}\zeta \left( \zeta^{\top}C-\zeta^{\top}AX^{k}B\right)B^{\dag} }{\left\| \zeta^{\top}A \right\|^2_2 },
\end{equation*}
which is called the Gaussian randomized Kaczmarz method about matrix $A$ (GaussRK-A).
Thus at each iteration, a random normal Gaussian vector $\zeta$ is drawn and a search direction is formed by $A^{\top}\zeta$.
Similarly, let $S=I \in \mathbb{R}^{p\times p}$ and $P=\eta \sim N\left( 0,\Sigma \right)$, we have the GaussRK-B method as follows
\begin{equation*}
  X^{k+1}=X^k+\frac{A^{\dag} \left(C \eta-AX^{k}B \eta \right)\eta^{\top}B^{\top} }{\left\| B\eta \right\|^2_2}.
\end{equation*}
\end{remark}

%
%

\section{Numerical results}
\label{sec:6}
In this section, we present several numerical examples to illustrate the performance of the iteration methods proposed in this paper for solving the matrix equation \eqref{eq:1.1}.
All experiments are carried out using MATLAB (version R2020a) on a personal computer with a 2.50 GHz central processing unit (Intel(R) Core(TM) i5-7200U CPU), 4.00 GB memory, and Windows operating system (64 bit Windows 10).

 To construct a matrix equation, we set $C = AX^*B$, where $X^*=ones(m,n)$ is the exact solution of this matrix equation. All computations are started from the initial guess $X_0 =O$, and terminated once the relative error (RE) of the solution, defined by
 \begin{equation*}
   RE=\frac{\left\|X^k-X^* \right\|_F^2}{\left\|X^*\right\|_F^2},
 \end{equation*}
 at the current iterate $X^k$, satisfies $RE < 10^{-6}$ or exceeded maximum iteration. IT and CPU denote the average number of iteration steps and the average CPU times (in seconds) for 10 times repeated runs, respectively. The item '$-$' represents that the number of iteration steps exceeds the maximum iteration (100000) or the CPU time exceeds 120s.
 
 We consider the following methods and variants:
 \begin{itemize}
   \item GRK with complete discrete sampling and as in Section \ref{subsec:3.1}.
   \item RCD with complete discrete sampling and as in Section \ref{ssubsec:3.3.2}.
   \item RK-A: \eqref{eq:2.1.4} with $S=e_i$ and $P=I$ as in Remark \ref{rem:1}.
   \item GaussGRK with Gaussian sampling and as in Section \ref{sec:4}.
   \item GaussRK-A: \eqref{eq:2.1.4} with $S=\zeta \sim N\left( 0,\Sigma \right)$ and $P=I$ as in \eqref{rem:3}.
 \end{itemize}

For the block methods GRBK\cite{NYQ2022}, we assume that $[p] = \left\{I_1, \cdots , I_s \right\}$ and $[q] = \left\{J_1, \cdots , J_t\right\}$, respectively, are partitions of $[p]$ and $[q]$, and the block samplings have the same size $\left|I_k\right| = \tau_1$ and $\left|J_k\right| = \tau_2$, where
\begin{equation*}
  I_i= \begin{cases}\left\{(i-1) \tau_1+1,(i-1) \tau_1+2, \cdots, i \tau_1\right\}, & i=1,2, \cdots, s-1, \\ \left\{(s-1) \tau_1+1,(s-1) \tau_1+2, \cdots, p\right\}, & i=s,\end{cases}
\end{equation*}
and
\begin{equation*}
  J_i= \begin{cases}\left\{(j-1) \tau_2+1,(j-1) \tau_2+2, \cdots, j \tau_2\right\}, & j=1,2, \cdots, t-1, \\ \left\{(t-1) \tau_2+1,(t-1) \tau_2+2, \cdots, q\right\}, & j=t.\end{cases}
\end{equation*}
We divided our tests into three categories: synthetic dense data, real-world sparse data, and CT Data.

\begin{example}\label {exp6.1}
Synthetic dense data. Random matrices for this test are generated as follows:
\begin{itemize}
  \item Type I \cite{NYQ2022}: For given $p, m$, and $r_1 = rank(A)$, we construct a matrix $A$ by $ A=U_1D_1V_1^{\top},$ where $U_1 \in \mathbb{R}^{p\times r_1}$ and $V_1 \in \mathbb{R}^{m\times r_1}$ are orthogonal columns matrices. The entries of $U_1$ and $V_1$ are generated from a standard normal distribution, and then columns are orthogonalization, i.e., $\left[U_1, \sim \right] = qr\left(randn(p,r_1),0 \right),\ \ \left[V_1, \sim \right] = qr\left(randn(m,r_1),0 \right).$ The matrix $D_1$ is an $r_1 \times r_1$ diagonal matrix whose diagonal entries are uniformly distribution numbers in $(1, 2)$, i.e., $ D_1=diag\left(1+rand(r_1,1)\right).$ Similarly, for given $n, q$, and $r_2 = rank(B)$, we construct a matrix $B$ by $ B=U_2D_2V_2^{\top},$ where $U_2 \in \mathbb{R}^{n\times r_2}$ and $V_1 \in \mathbb{R}^{q\times r_2}$ are orthogonal columns matrices, and the matrix $D_2$ is an $r_2 \times r_2$ diagonal matrix.
  \item Type II: For given $p, m, n, q$, the entries of $A$ and $B $ are generated from  standard normal distributions, i.e., $ A=randn(p, m),\ \ B=randn(n, q).$
\end{itemize}
\end{example} 
In Tables \ref{table:1} and \ref{table:2}, we report the average IT and CPU of GRK, GaussGRK, GRBK, RCD, RK-A, and GaussRK-A for solving matrix equations with Types I and II, where $A$ is full column rank, i.e., $r_1=m$ and $B$ full row rank i.e., $r_2=n$ in Type I. For the GRBK method, we use different block sizes in Table \ref{table:1} while fixed block sizes $ \tau_1=\tau_2=10$ in Table \ref{table:2}. 

From these two tables, we can see that the GaussGRK method is better than the GRK method in terms of IT and CPU time. The IT and CPU time of both the GRK and GaussGRK methods increase with the increase of matrix dimensions. However, the GaussGRK method has a small increase in terms of CPU time. In Fig. \ref{figure:1}, we plot the relative errors of GRK and GaussGRK for two matrix equations with Type I ($A = U_1D_1V_1^{\top}$ with $m = 50, p = 20, r_1 = 20$ and $B = U_2D_2V_2^{\top}$ with $n = 50, q =20, r_2 = 20$) and Type II ($A = randn(50, 20)$ and $B = randn(20, 50)$). From Fig. \ref{figure:1}, we can more intuitively see that the GaussGRK method is better than the GRK method in terms of IT and CPU time. However, as the matrix size continues to increase, the GaussGRK method and the GRK method require significant computational costs, so these two methods will not be considered in future experiments.

\begin{figure}[htbp]
  \centering
  \subfigure[Type I]
  {
   \includegraphics[width=3.8cm]{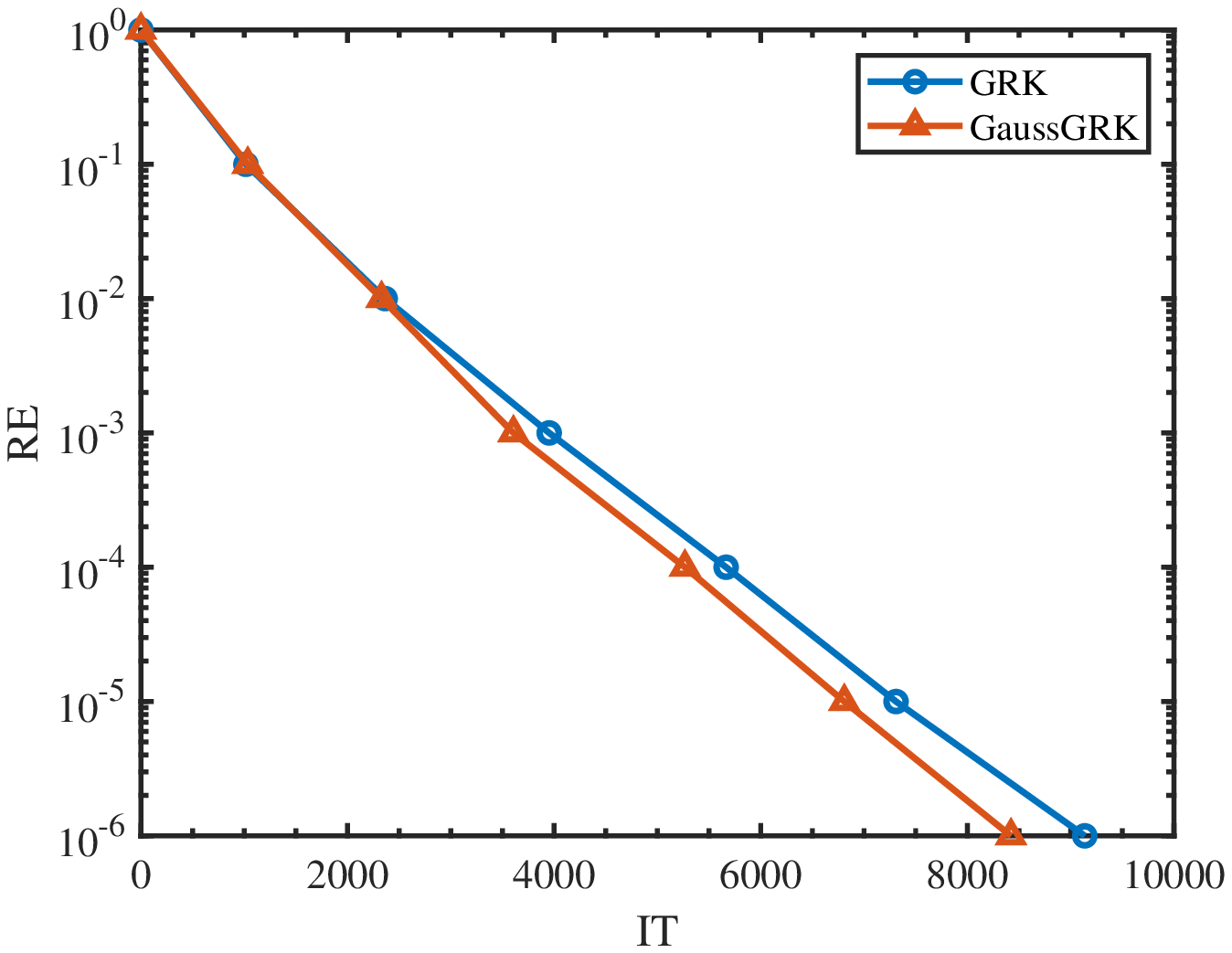}
   \includegraphics[width=3.8cm]{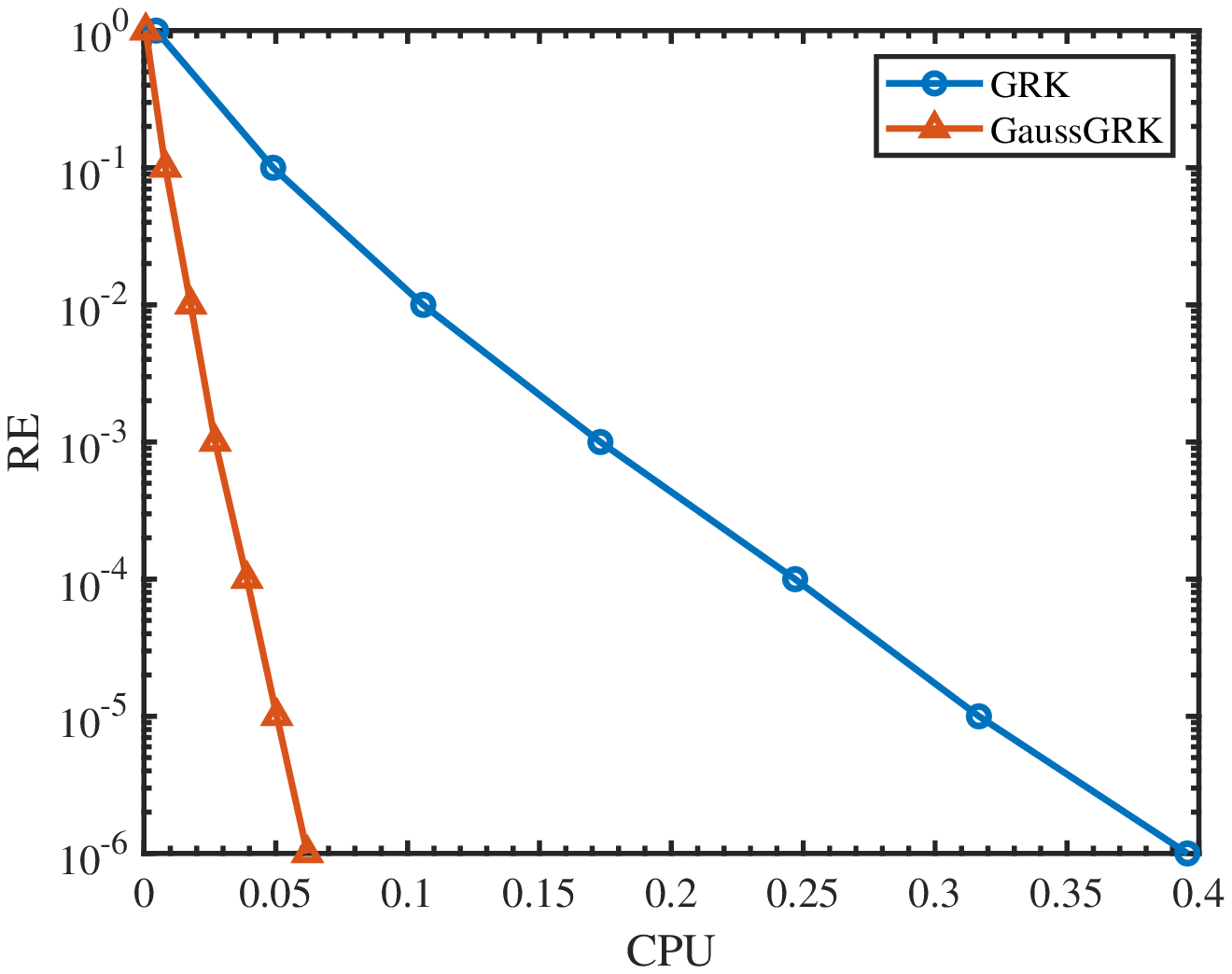}
  }
  \subfigure[Type II]
  {
   \includegraphics[width=3.8cm]{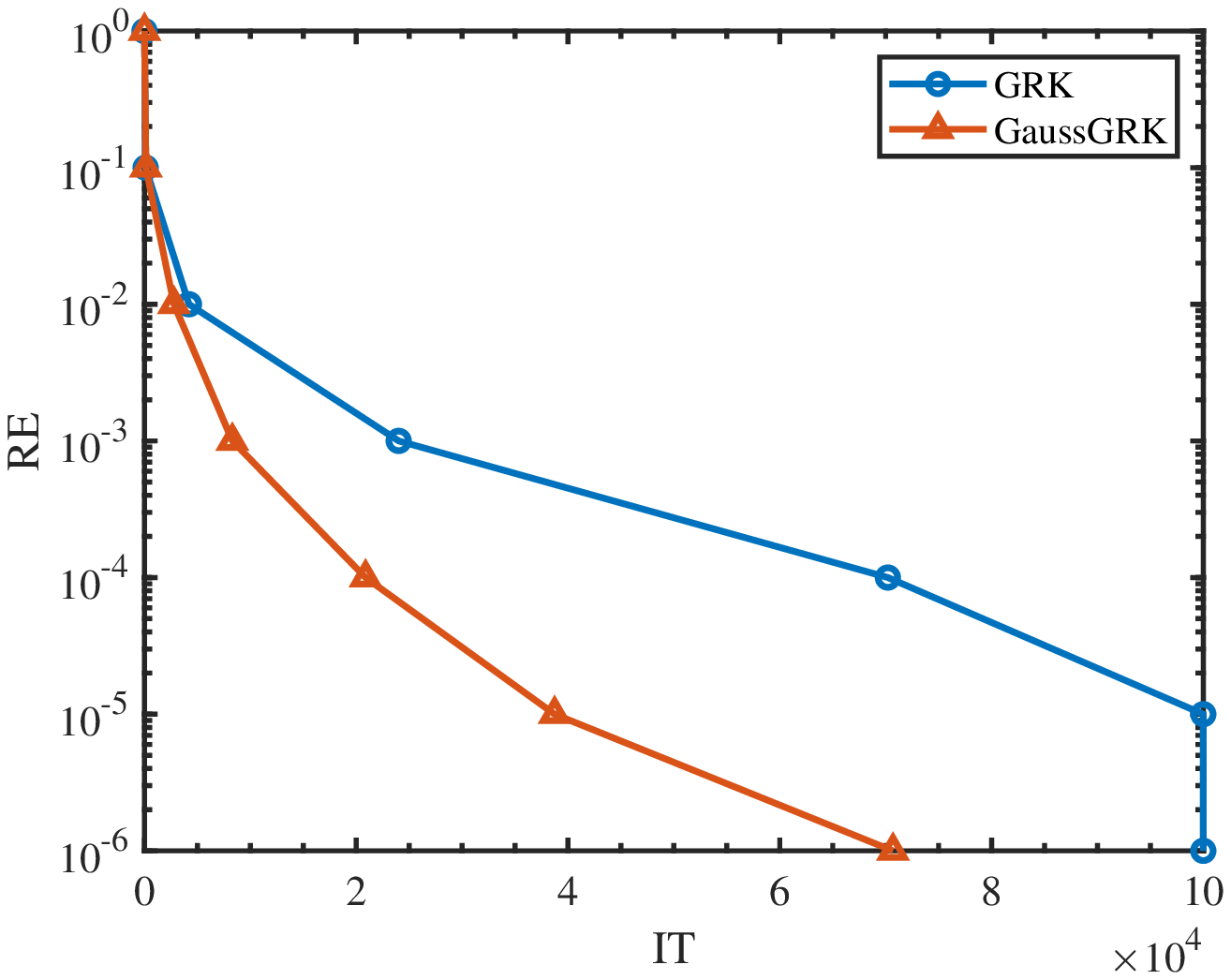}
   \includegraphics[width=3.8cm]{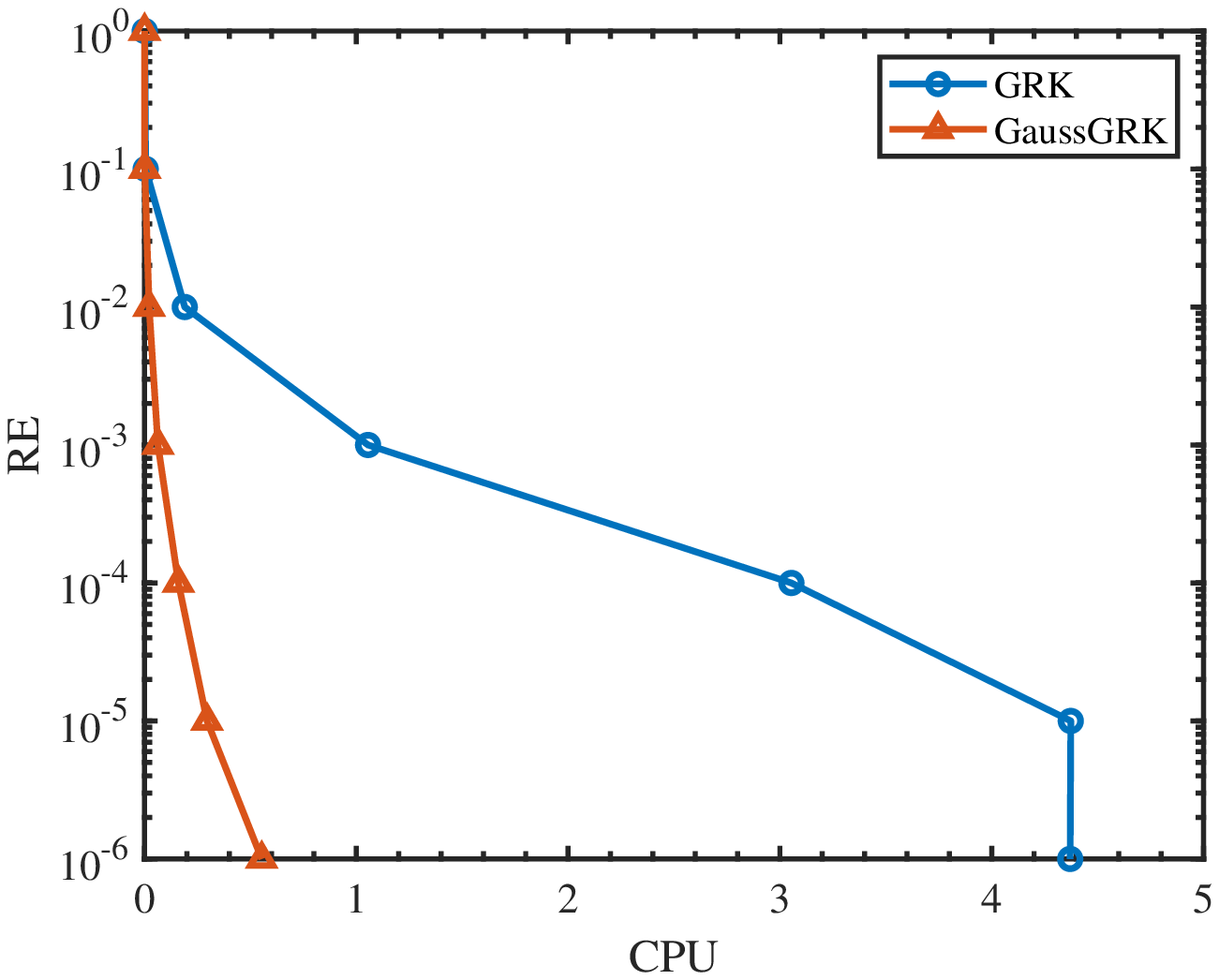}
  } \\
  \caption{The relative errors of GRK and GaussGRK for Types I and II}\label{figure:1}
\end{figure}
\begin{table}[htbp]
	\footnotesize
	\resizebox{\textwidth}{!}{
		\begin{threeparttable}
			\caption{The average IT and CPU of GRK, GaussGRK, GRBK, RCD, RK-A, and GaussRK-A with Type I.}
			\label{table:1}
			\begin{tabular}{ccccccccccccccc}
				\toprule  
				m & p & $\tau_1$ & n & q & $\tau_2$ &    & GRK  & GaussGRK  & GRBK  & RCD  & RK-A  & GaussRK-A  \\
				\hline
				\multirow{2}*{50} & \multirow{2}*{20} & \multirow{2}*{10} & \multirow{2}*{20} & \multirow{2}*{50} &\multirow{2}*{10}
				& IT      & 8596    & 8013    & \textbf{112}     & 267     & 287     & 291      \\
				&  &  &  &  &    & CPU     & 0.3760  & 0.0609  & 0.0175  & 0.0122  & 0.0089  & \textbf{0.003}   \\
				\hline
				\multirow{2}*{100} & \multirow{2}*{40} & \multirow{2}*{20} & \multirow{2}*{40} & \multirow{2}*{100} &\multirow{2}*{20}
				& IT      & 38985   & 37349   & \textbf{94}      & 570     & 663     & 643      \\
				&  &  &  &  &    & CPU     & 1.8965  & 0.5035  & 0.0294  & 0.0484  & 0.0335  & \textbf{0.0172}    \\
				\hline
				\multirow{2}*{100} & \multirow{2}*{40} & \multirow{2}*{20} & \multirow{2}*{100} & \multirow{2}*{500} &\multirow{2}*{100}
				& IT      & $-$    & $-$    & \textbf{26}      & 606     & 667     & 648    \\
				&  &  &  &  &    & CPU     & $-$    & $-$    & \textbf{0.0573}  & 0.2978  & 0.1685  & 0.1465    \\
				\hline
				\multirow{2}*{500} & \multirow{2}*{100} & \multirow{2}*{50} & \multirow{2}*{100} & \multirow{2}*{500} &\multirow{2}*{50}
				& IT      & $-$    & $-$    & \textbf{78}      & 1513    & 1561    & 1650    \\
				&  &  &  &  &    & CPU     & $-$    & $-$    & \textbf{0.1442}  & 4.7249  & 0.6455  & 0.9307    \\
				\hline
				\multirow{2}*{1000} & \multirow{2}*{200} & \multirow{2}*{200} & \multirow{2}*{100} & \multirow{2}*{500} &\multirow{2}*{50}
				& IT      & $-$   & $-$    & \textbf{24}     & 3122     & 3242    & 3322    \\
				&  &  &  &  &    & CPU     & $-$   & $-$    & \textbf{0.2241} & 19.1592  & 2.5464  & 4.2424    \\
				\bottomrule 
			\end{tabular}
		\end{threeparttable}
	}
\end{table}

From Tables \ref{table:1} and \ref{table:2}, we observe that the GRBK method vastly outperforms the RCD, RK-A, and GaussRK-A methods in terms of IT and CPU time, because the GRBK method selects multiple rows and columns in each iteration. Among these methods which select a single row or column for calculation in each iteration, the RK-A and GaussRK-A methods perform slightly better than than the RCD method. In detail, from Table \ref{table:1}, we observe that the RCD method requires fewer iteration steps, the GaussRK-A method takes less CPU when the matrix size is small and the RK-A method is more challenging when the matrix size is large. From Table \ref{table:2}, we find that the GaussRK-A method is competitive in terms of IT and CPU time.

\begin{table}[htbp]
    \footnotesize
    \resizebox{\textwidth}{!}{
    \begin{threeparttable}
		\caption{The average IT and CPU of GRK, GaussGRK, GRBK, RCD, RK-A, and GaussRK-A with Type II.}
		\label{table:2}
		\begin{tabular}{ccccccccccccc}
			\toprule  
			m & p & n & q &               & GRK    & GaussGRK & GRBK   & RCD     & RK-A   & GaussRK-A  \\
			\hline
			\multirow{2}*{30} & \multirow{2}*{10} & \multirow{2}*{10} & \multirow{2}*{30}
                                & IT      & 50057  & 11656   & \textbf{1}       & 770     & 530     & 256      \\
			         &  &  &    & CPU     & 2.0871 & 0.0708  & \textbf{0.0003}  & 0.0213  & 0.0139  & 0.0017    \\	
			\hline
			\multirow{2}*{50} & \multirow{2}*{20} & \multirow{2}*{20} & \multirow{2}*{50}
                                & IT      & $-$    & 74033   & \textbf{88}      & 2694    & 1541    & 825      \\
			         &  &  &    & CPU     & $-$    & 0.6492  & 0.0157  & 0.1233  & 0.0475  & \textbf{0.0075}    \\
			\hline
			\multirow{2}*{100} & \multirow{2}*{40} & \multirow{2}*{40} & \multirow{2}*{100}
                                & IT      & $-$    & $-$    & \textbf{588}     & 7442    & 4092    & 2160      \\
			         &  &  &    & CPU     & $-$    & $-$    & 0.0974  & 0.6245  & 0.2027  & \textbf{0.0556}    \\
			\hline
			\multirow{2}*{100} & \multirow{2}*{40} & \multirow{2}*{100} & \multirow{2}*{500}
                                & IT      & $-$    & $-$    & \textbf{966}     & 5494    & 3082    & 1697    \\
			         &  &  &    & CPU     & $-$    & $-$    & \textbf{0.2509}  & 2.3004  & 0.5499  & 0.2739    \\
			\hline
			\multirow{2}*{500} & \multirow{2}*{100} & \multirow{2}*{100} & \multirow{2}*{500}
                                & IT      & $-$    & $-$    & \textbf{1775}    & 8749    & 5259    & 2977    \\
			         &  &  &    & CPU     & $-$    & $-$    & \textbf{0.5908}  & 27.2126 & 2.0268  & 1.6648    \\
			\hline
			\multirow{2}*{1000} & \multirow{2}*{200} & \multirow{2}*{100} & \multirow{2}*{500}
                                & IT      & $-$   & $-$    & \textbf{3486}     & 18890     & 10251   & 5799    \\
			         &  &  &    & CPU     & $-$   & $-$    & \textbf{1.4599}   & 116.6466  & 8.3202  & 7.4095    \\
			\hline
			\multirow{2}*{1000} & \multirow{2}*{200} & \multirow{2}*{200} & \multirow{2}*{1000}
                                & IT      & $-$   & $-$    & \textbf{7250}     & $-$   & 9998    & 5896    \\
			         &  &  &    & CPU     & $-$   & $-$    & \textbf{4.3672}   & $-$   & 31.4897 & 23.0945    \\
           \bottomrule 
		\end{tabular}
	\end{threeparttable}
    }
\end{table}

To accelerate the convergence speed,  multiple columns (or rows) can be selected in the RCD and RK-A methods instead of a single column (or row) in each iteration, i.e., block versions. We will not experiment and show them here. However, the block variant of the GaussRK-A method is a dense matrix computation, so the computational complexity increases significantly from vector-matrix product to matrix-matrix product.

In Fig.\ref{figure:2}, we plot the relative errors of the GRBK method with different block sizes $\tau_1 = \tau_2 = \tau$ for Type I ($A = U_1D_1V_1^{\top}$ with $m = 100, p = 50, r_1 = 50$ and $B = U_2D_2V_2^{\top}$ with $n = 100, q = 50, r_2 = 50$). From Fig. \ref{figure:2}(a), we observe that increasing block sizes leads to a better convergence rate of the GRBK method. From Fig.\ref{figure:2}(b), we can find that as the block sizes $\tau$ increase, the IT and CPU time first decreases, and then increases after reaching the minimum.  From Fig.\ref{figure:2}(c), it is easy to see that when $\tau=14,15,16$, the IT and CPU time reach the minimum. The GRK method is the GRBK method with the sizes of block index sets  $\left|I_k \right|=\left|J_k\right|=1$. This also verifies that the GRK method is computationally expensive.

\begin{figure}[htbp]
	\centering
	\subfigure[]
	{
		\includegraphics[width=3.8cm]{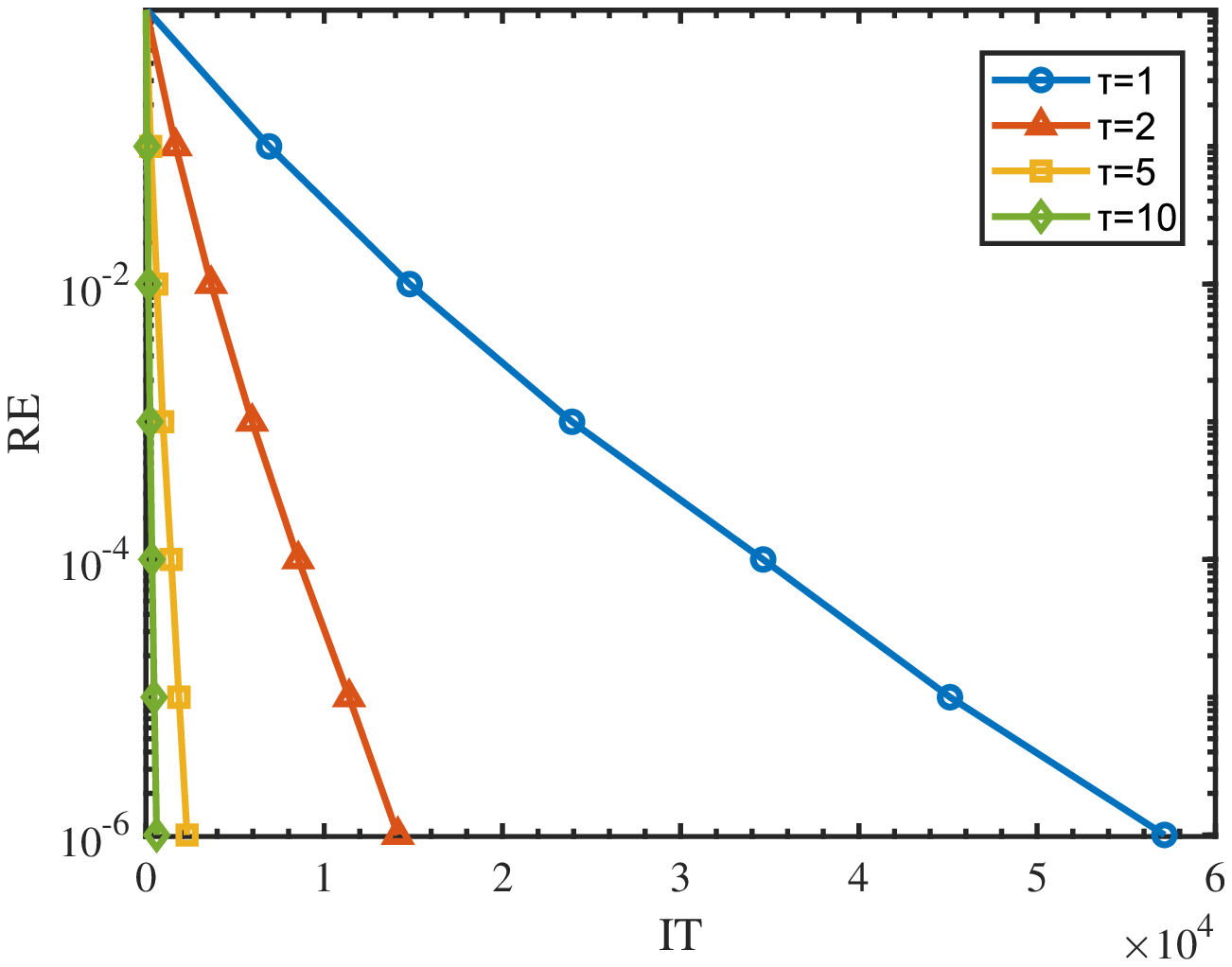}
		\includegraphics[width=3.8cm]{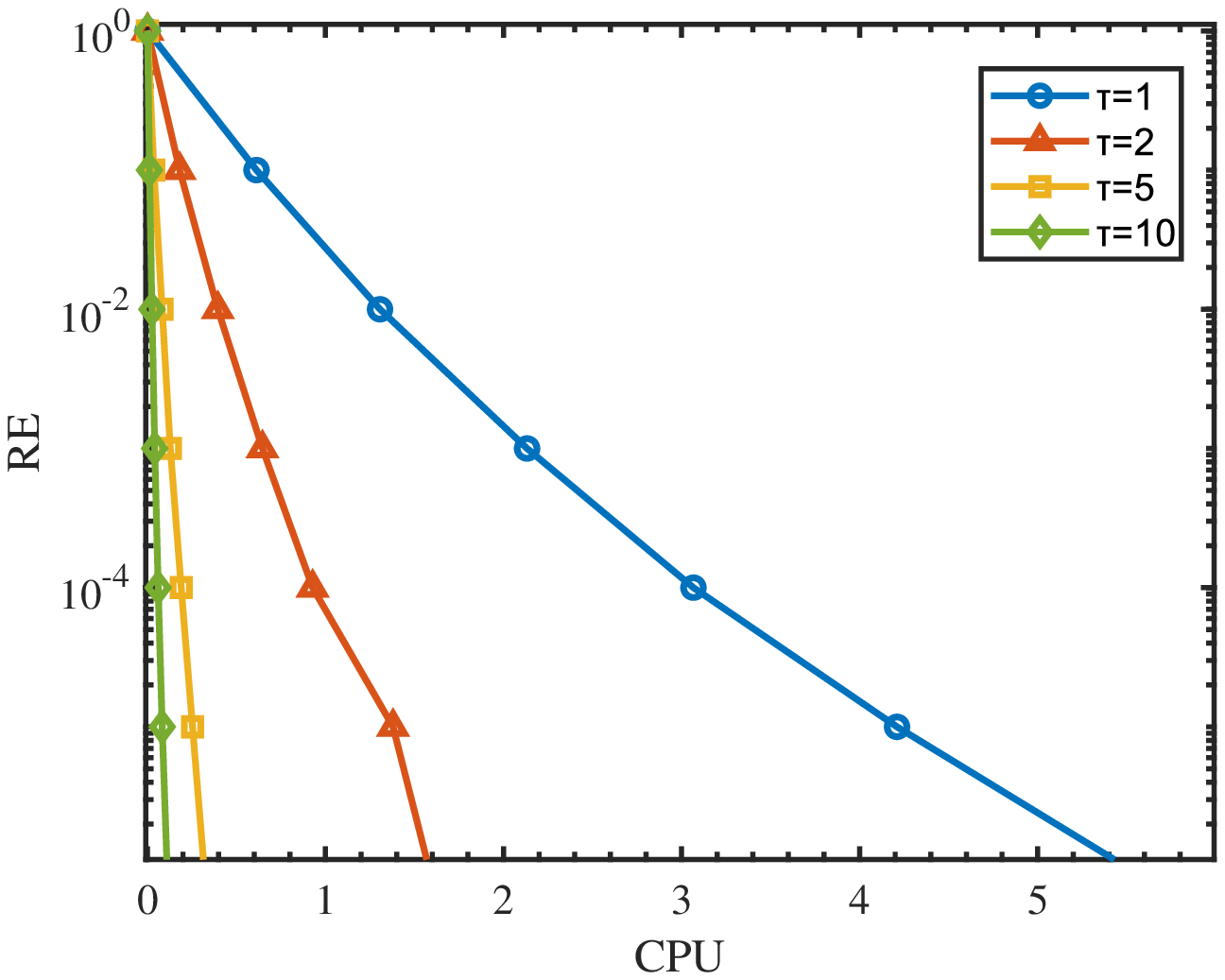}
	} \\
	\subfigure[]
	{
		\includegraphics[width=3.8cm]{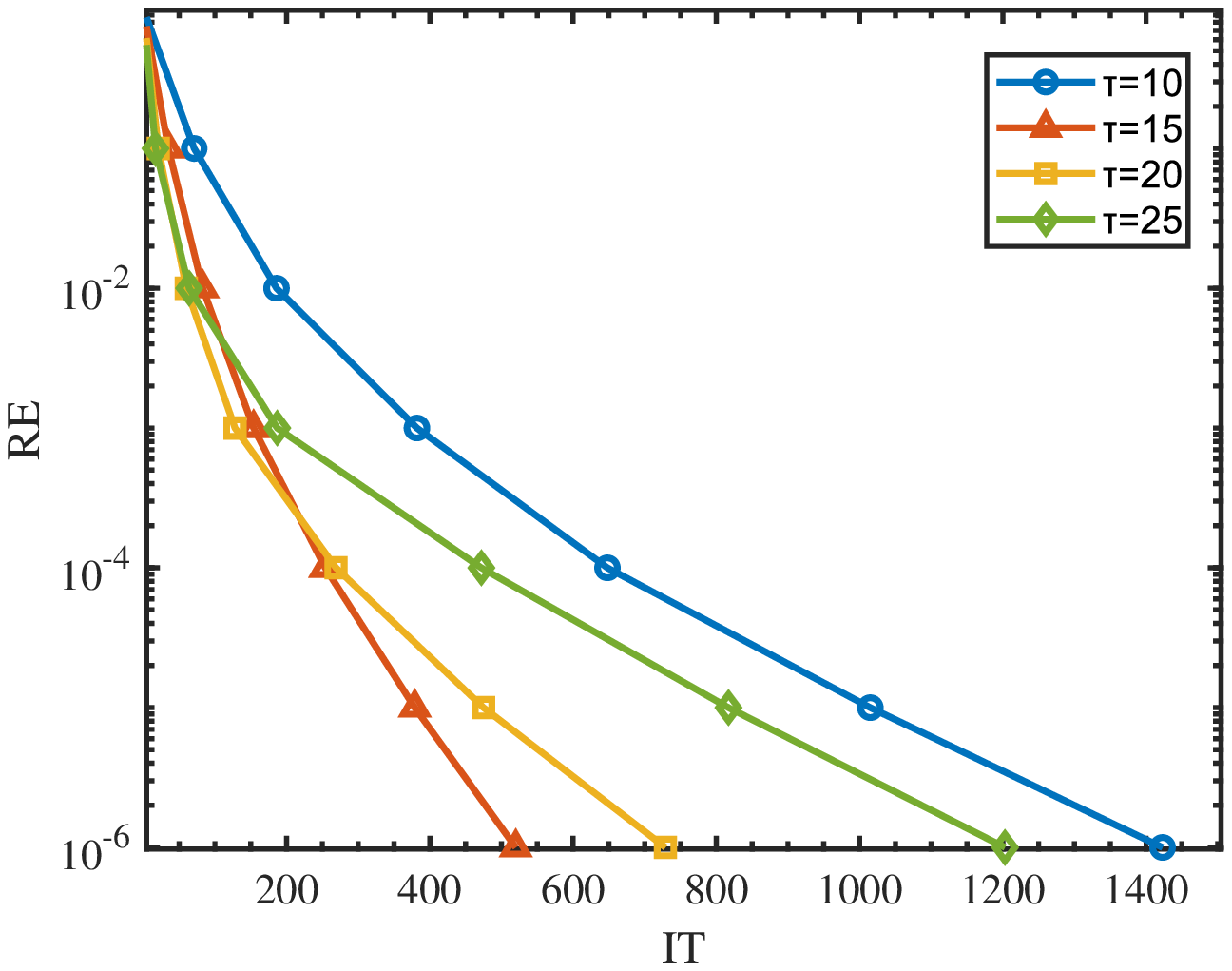}
		\includegraphics[width=3.8cm]{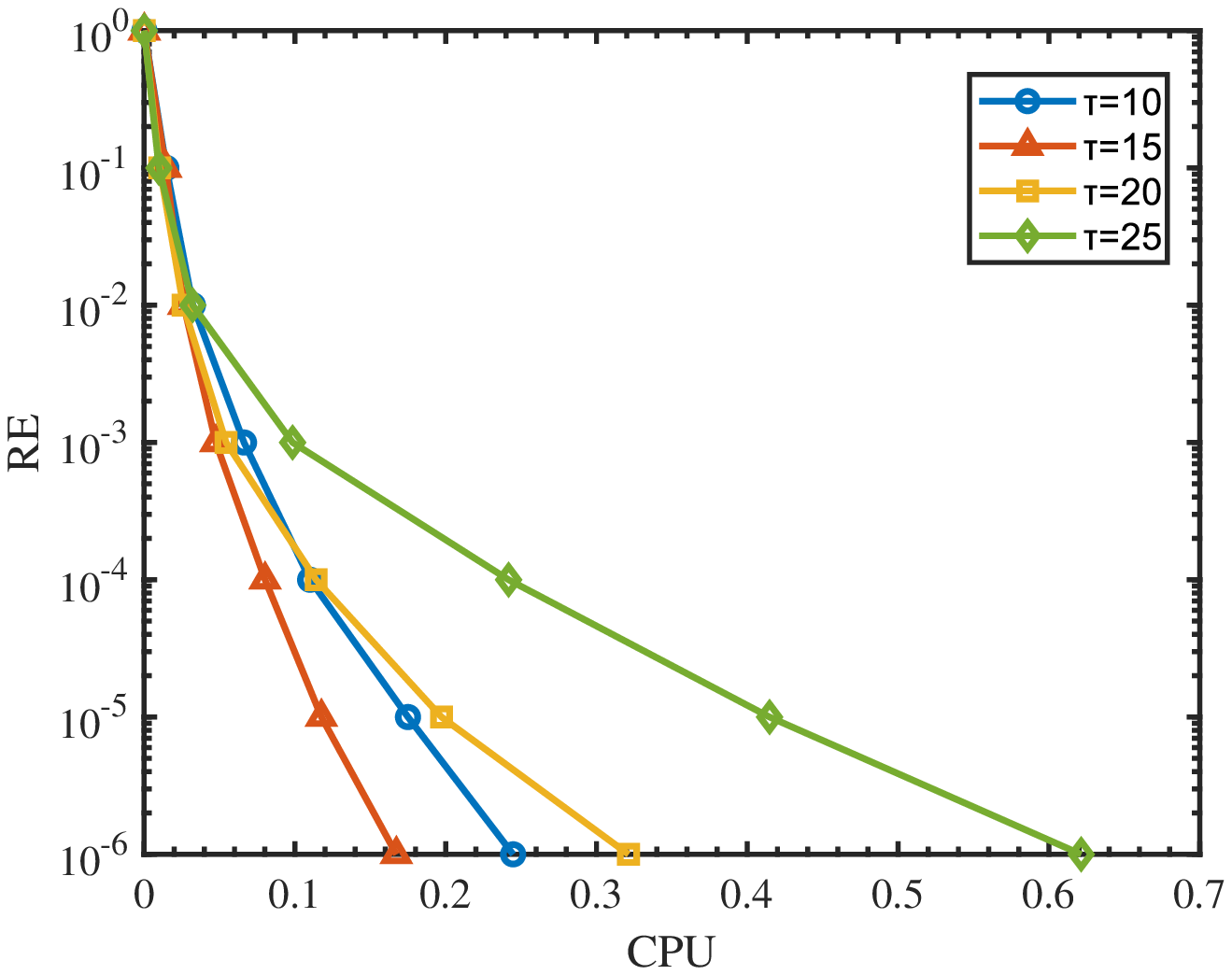}
	}
	\subfigure[]
	{
		\includegraphics[width=3.8cm]{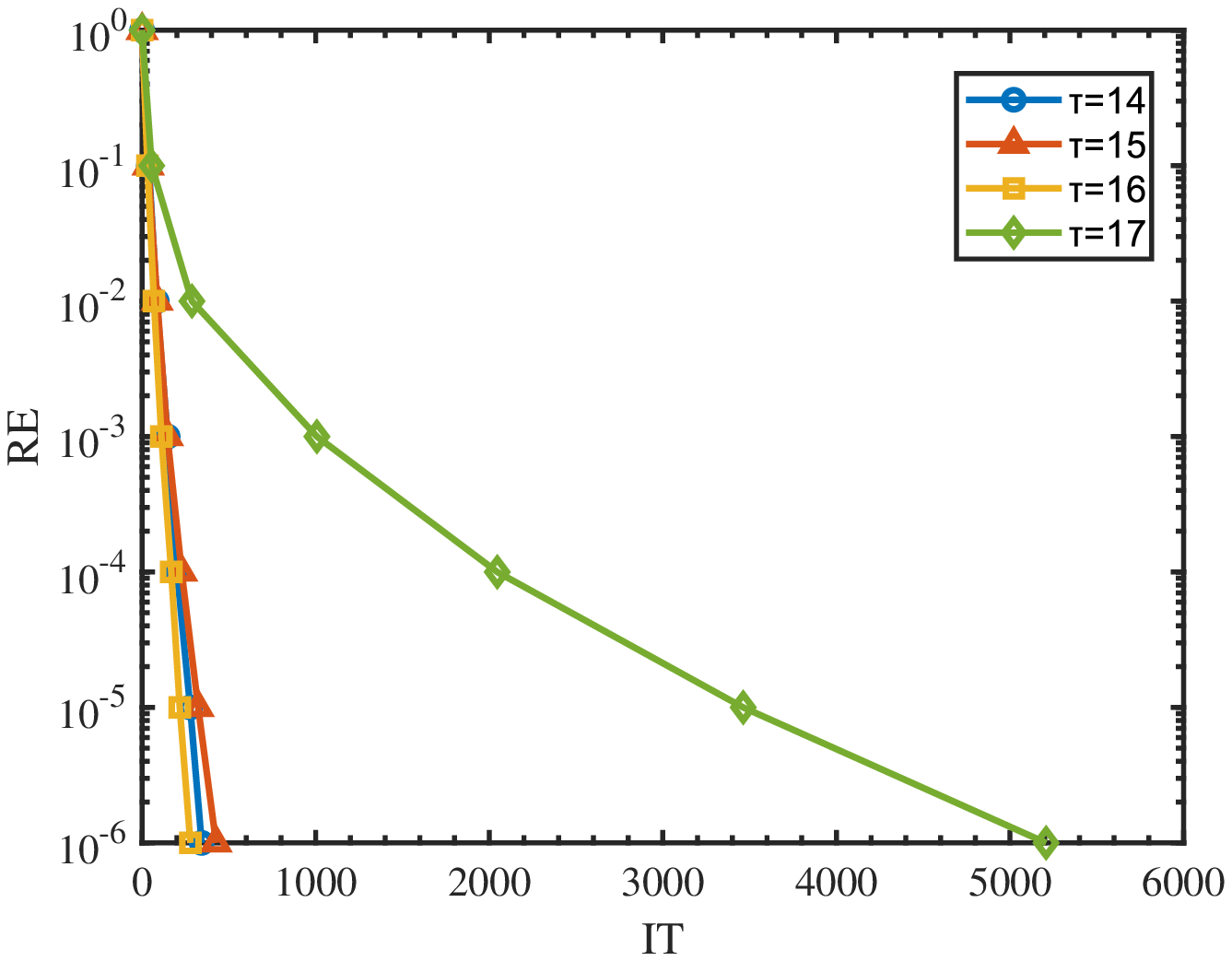}
		\includegraphics[width=3.8cm]{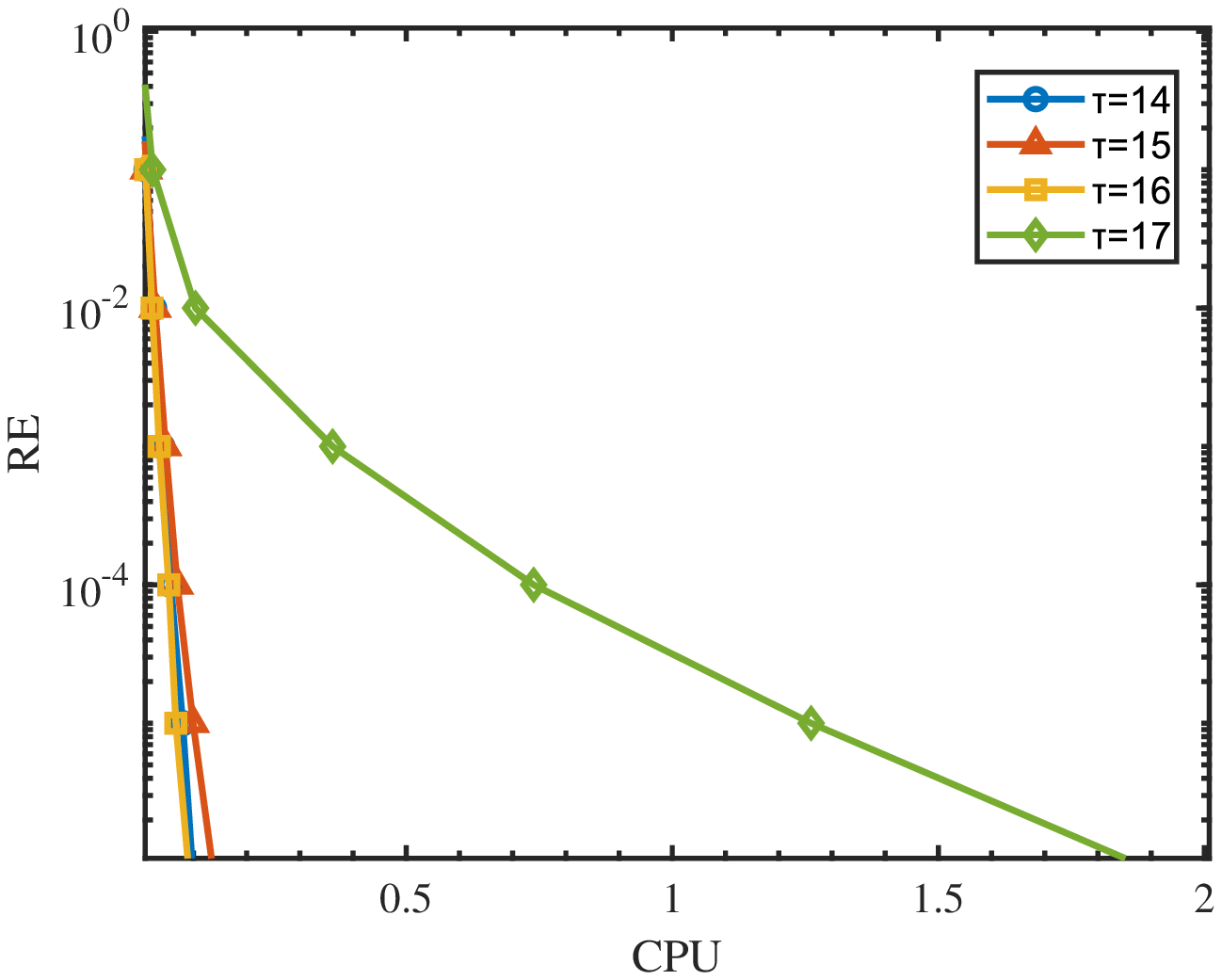}
	} \\
	\caption{Relative errors of GRBK with different block sizes $\tau_1 = \tau_2 = \tau$ for Type I.}\label{figure:2}
\end{figure}

Finally, we also compare them with the RBCD\cite{DK2021} method. To give an intuitive demonstration of the advantage, we define the speed-up as follows:
\begin{equation*}
  speed-up = \frac{CPU\ of\ RBCD}{CPU\ of\ NEW\ METHOD}.
\end{equation*}
In Fig.\ref{figure:4}, we plot the relative errors of RBCD, RCD, RK-A, GaussRK-A, and GRBK for matrix equation with Type II ($A = randn(100, 20)$, $B = randn(20, 100)$). For the GRBK method, we use the almost optimal block sizes $\tau_1=\tau_2=15$. We can see that the RCD, RK-A, GaussRK-A, and GRBK methods are better than the RBCD method in terms of IT and CPU time.
From Table \ref{table:5}, we see that the IT and CPU of the RCD, RK-A, GaussRK-A, and GRBK methods are smaller than the RBCD methods in terms of both iteration counts and CPU times with significant speed-ups.

\begin{table}[htbp]
    \footnotesize
    \resizebox{\textwidth}{!}{
    \begin{threeparttable}
		\caption{The average IT, CPU and speed-up of RBCD, RCD, RK-A, GaussRK-A, and GRBK with Type II.}
		\label{table:5}
		\begin{tabular}{ccccccccccccc}
			\toprule  
			   & $p\times m$  & $50\times 20$   & $100\times 20$    & $200\times 20$     & $500\times 20$    & $1000\times 20$ \\
			\hline
			\multirow{2}*{RBCD}
               & IT           & 4204          & 1025          & 376            & 302            & 2.0041  \\
			   & CPU          & 0.1208        & 0.0454        & 0.0342         & 0.4958         & 0.0655  \\	
			\hline
			\multirow{3}*{RCD}
               & IT           & 801              & 353               & 255          & 182          & 169 \\
			   & CPU          & 0.0338           & 0.0226            & 0.0332       & 0.3916       & 1.9815     \\
			   & speed-up     & 3.5740           & 2.0088            & 1.0301       & 1.2661       & 1.0114      \\
			\hline
			\multirow{3}*{RK-A}
               & IT           & 640             & 375             & 280           & 266            & 269  \\
			   & CPU          & 0.0181          & 0.0125          & 0.0181        & 0.0622         & 0.7622     \\
			   & speed-up     & 6.674           & 3.6320          & 1.8895        & 7.9711         & 2.6294     \\
			\hline
			\multirow{3}*{GaussRK-A}
               & IT           & 590              & 361             & 284           & 258            & 256  \\
			   & CPU          & \textbf{0.0053}           & \textbf{0.0053}          & 0.0117        & 0.0646         & 0.9349     \\
			   & speed-up     & \textbf{22.7925}          & \textbf{8.5660 }         & 2.9231        & 7.6749         & 2.1437    \\
			\hline
			\multirow{3}*{GRBK}
               & IT           & 70              & 31                & 23            & 19            & 19  \\
			   & CPU          & 0.0135          & 0.0060            & \textbf{0.0046}        & \textbf{0.0041}        & \textbf{0.0061}      \\
			   & speed-up     & 8.9481          & 7.5667            & \textbf{7.4348}        & \textbf{120.9268}      & \textbf{328.541}      \\
           \bottomrule 
		\end{tabular}
	\end{threeparttable}
    }
\end{table}
\begin{figure}[htbp]
	\centering
	\subfigure{\includegraphics[width=6.5cm]{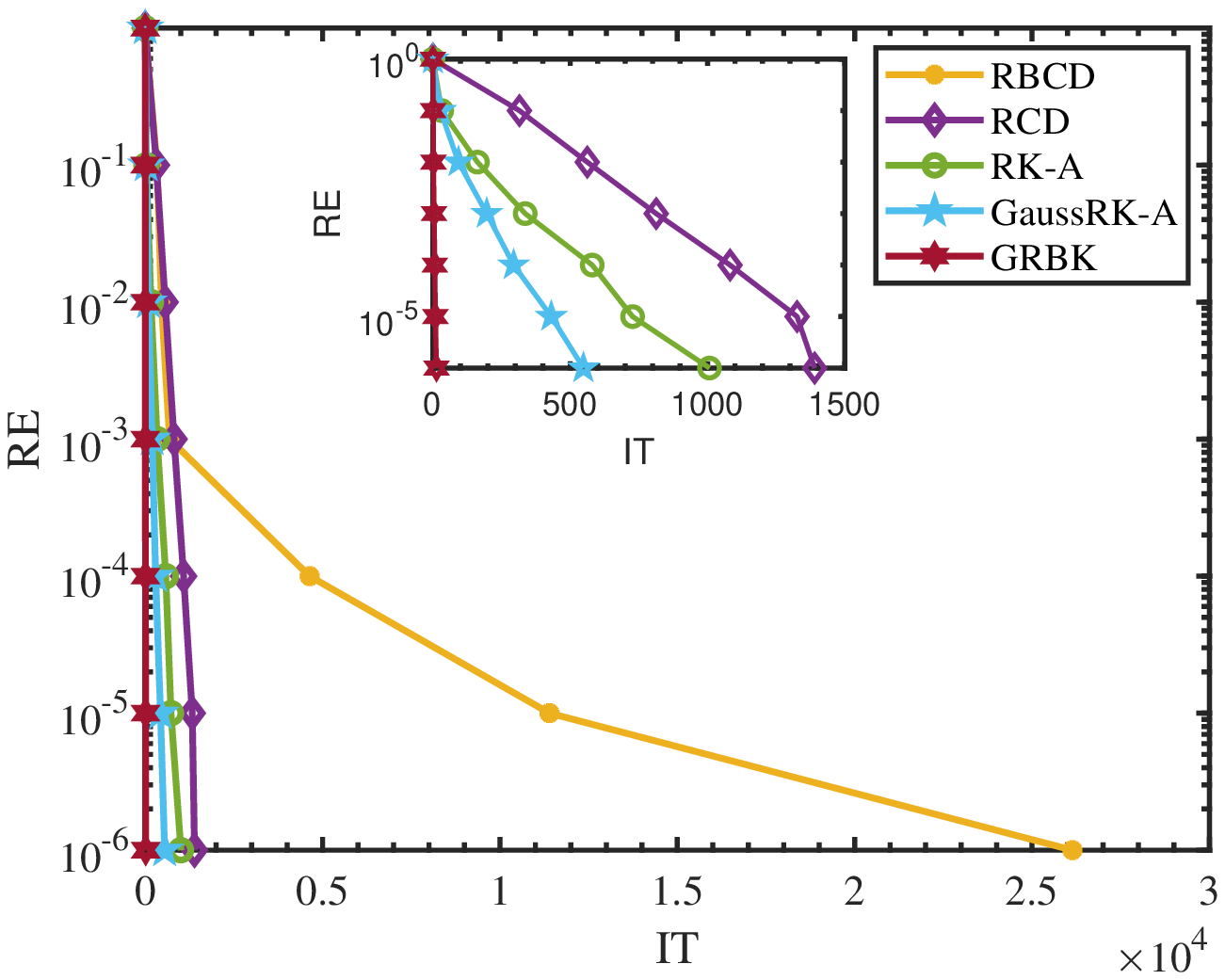}}
	\subfigure{\includegraphics[width=6.5cm]{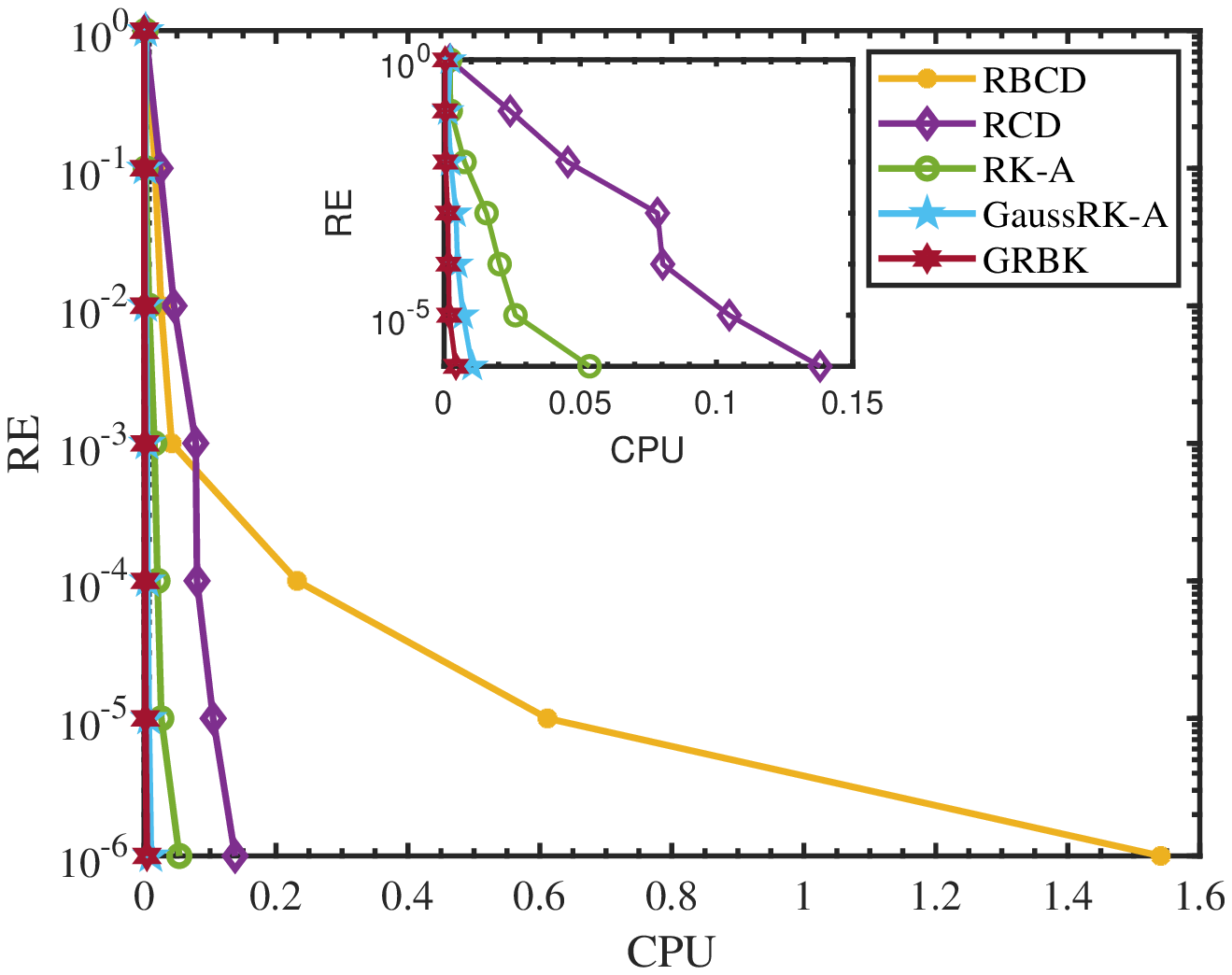}}
	\caption{Relative errors of RBCD, RCD, RK-A, GaussRK-A, and GRBK with Type II.} \label{figure:4}
\end{figure}
\begin{example} \label{exp6.2} Real-world sparse data.
 The entries of $A$ and $B $ are selected from the real-world sparse data \cite{AH2011}.
 \end{example} 
 Table \ref{table:4} lists the features of these sparse matrices, in which rank(A) denote the rank of the matrix A, respectively, and the density is defined as
\begin{equation*}
  density = \frac{\text{the number of non-zero elements of an $m$-by-$n$ matrix}}{mn},
\end{equation*}
which indicates the sparsity of the corresponding matrix.
 \begin{table}[htbp]
    \footnotesize\begin{center}
    \begin{threeparttable}
		\caption{The detailed features of sparse matrices from \cite{AH2011}.}
		\label{table:4}
		\begin{tabular}{ccccccccccccccc}
			\toprule  
			 name && size  && rank  && density     \\
			\hline
			ash219 && 219 $\times$ 85 && 85 &&  2.3529\%   \\
			\hline
			ash958 && 958 $\times$ 292 && 292 && 0.68493\%    \\
			\hline
			divorce && 50 $\times$ 9 && 9 && 50\%    \\
			\hline
			Worldcities && 315 $\times$ 100 && 100 && 53.625\%    \\
           \bottomrule 
		\end{tabular}
	\end{threeparttable}
\end{center}
\end{table}
Numerical results are shown in Fig.\ref{figure:3} and Table \ref{table:3}.
In Fig.\ref{figure:3}, we plot the relative errors of GRBK, RCD, RK-A, and GaussRK-A for the real-world matrix equations. For the GRBK method, we use the block sizes $\tau_1=\tau_2=15$. In Table \ref{table:3}, we report the average IT and CPU of GRK, GaussGRK, GRBK, RCD, RK-A, and GaussRK-A for solving real-world matrix equations. From them, we observe again that the curves of the GRBK methods are decreasing much more quickly than those of the RCD, RK-A, and GaussRK-A methods with respect to the increase of the iteration steps and CPU times. However, as the matrix size increases, the CPU of the GRBK method grows because it takes some time to compute the pseudoinverse. At this time, the RK-A and GaussGRK-A methods are more prominent in terms of IT and CPU times.

\begin{figure}[htbp]
	\centering
	\subfigure[$A=ash219, B=divorce^{\top}$]
	{
		\includegraphics[width=3.8cm]{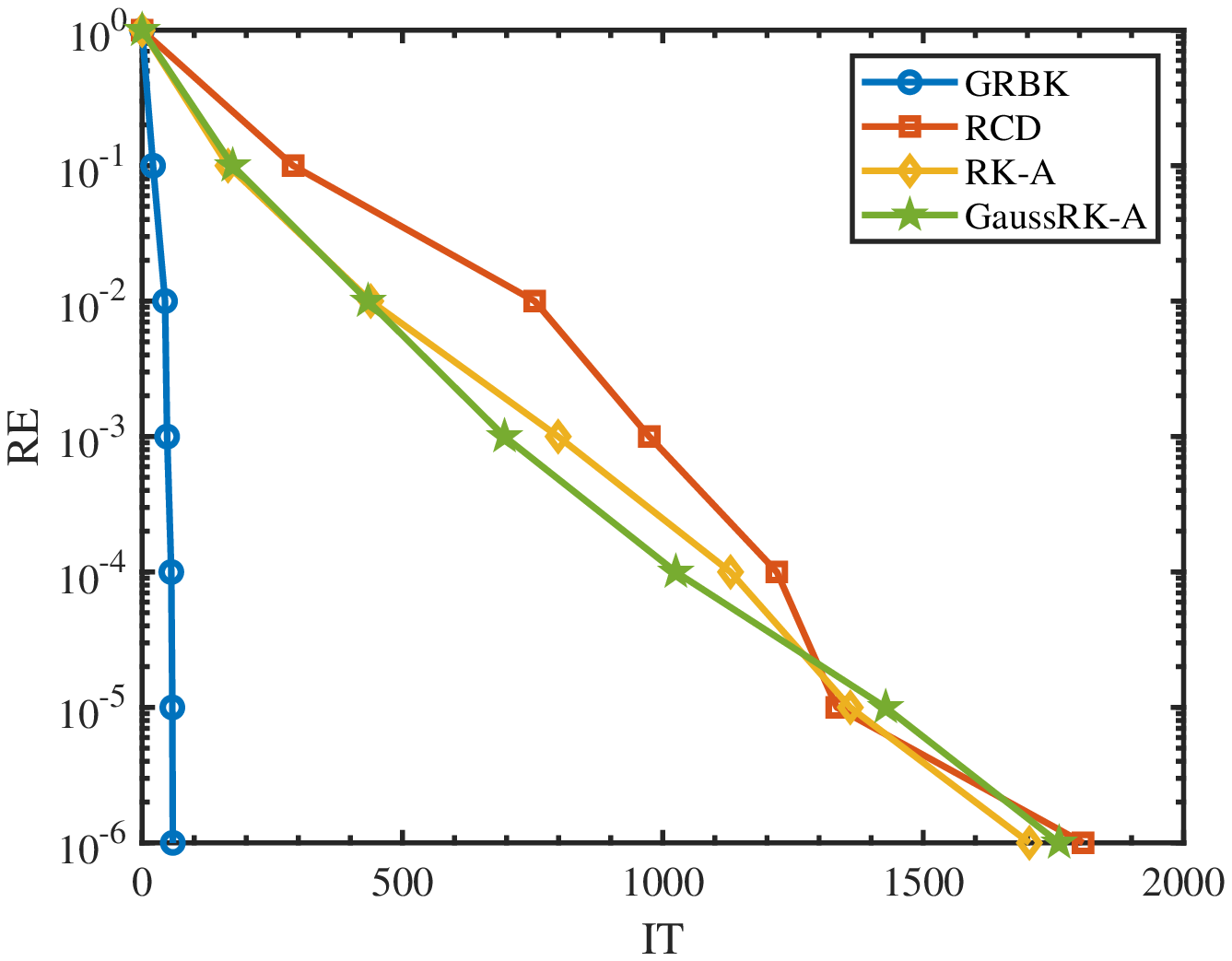}
		\includegraphics[width=3.8cm]{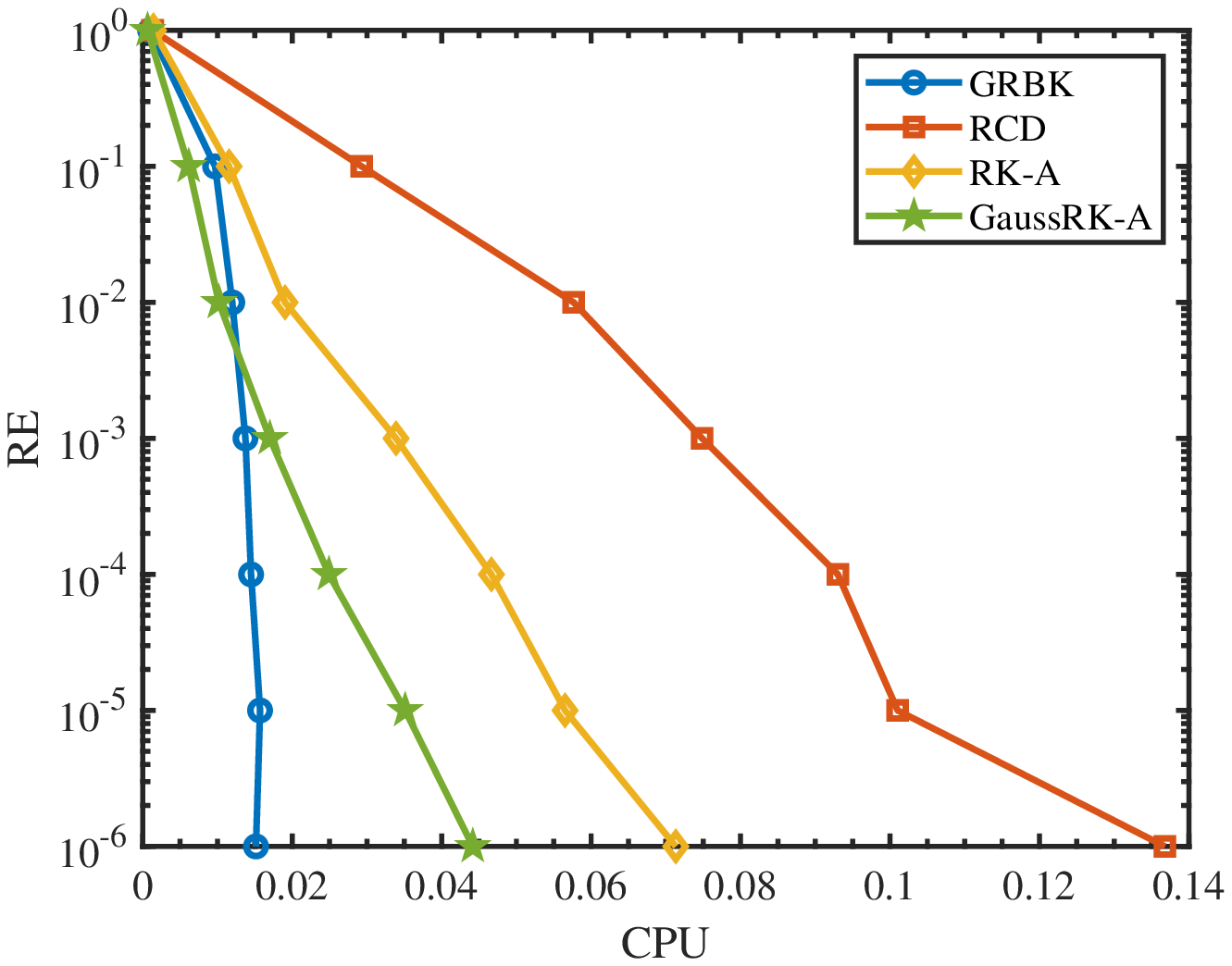}
	}
	\subfigure[$A=divorce, B=ash219^{\top}$]
	{
		\includegraphics[width=3.8cm]{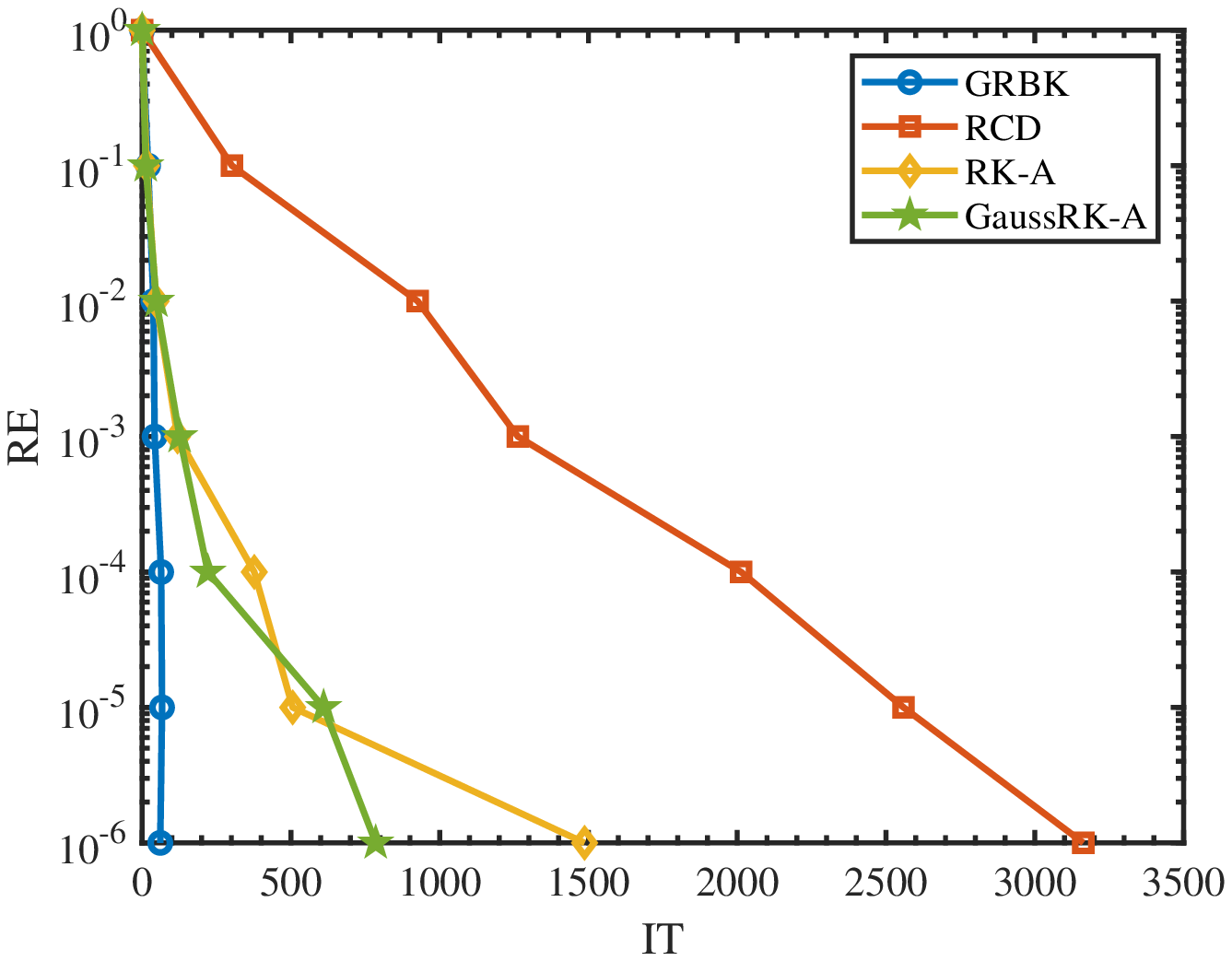}
		\includegraphics[width=3.8cm]{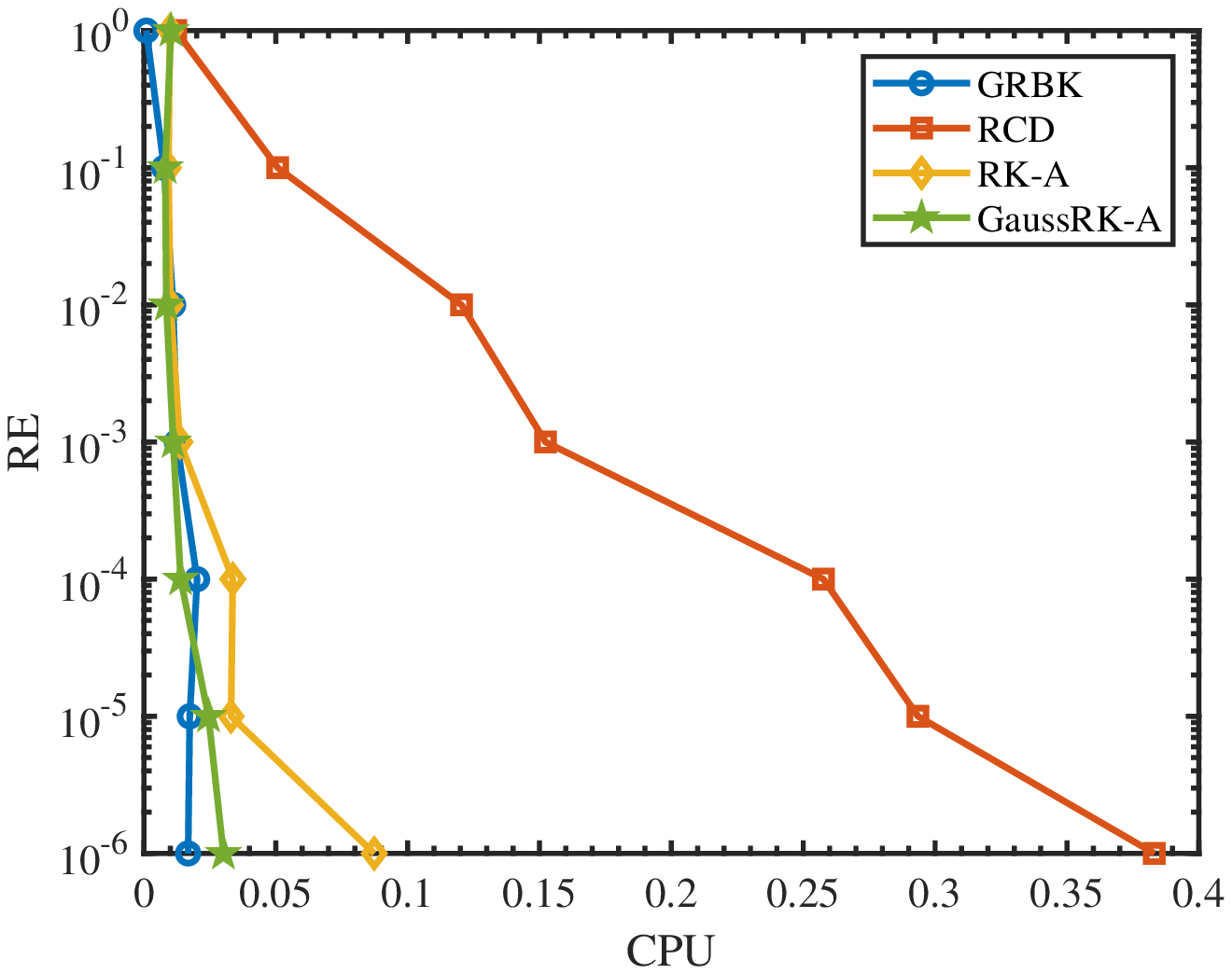}
	}\\
	\caption{Relative errors of GRBK, RCD, RK-A, and GaussRK-A.}\label{figure:3}
\end{figure}

\begin{table}[htbp]
    \footnotesize
    \resizebox{\textwidth}{!}{
    \begin{threeparttable}
		\caption{The average IT and CPU of GRK, GaussGRK, GRBK, RCD, RK-A and GaussRK-A.}
		\label{table:3}
		\begin{tabular}{ccccccccccccc}
			\toprule  
			A & B & $\tau_1$ & $\tau_2$ &               & GRK    & GaussGRK & GRBK   & RCD     & RK-A   & GaussRK-A  \\
			\hline
			\multirow{2}*{ash219} & \multirow{2}*{divorce$^{\top}$} & \multirow{2}*{15} & \multirow{2}*{15}
                                & IT      & $-$  & $-$   & \textbf{58}      & 1334     & 1360     & 1428     \\
			         &  &  &    & CPU     & $-$ & $-$  & \textbf{0.0152}  & 0.1368  & 0.0713  & 0.0442    \\	
			\hline
			\multirow{2}*{divorce} & \multirow{2}*{ash219$^{\top}$} & \multirow{2}*{15} & \multirow{2}*{15}
                                & IT      & $-$    & $-$   & \textbf{67}      & 2559    & 506    & 610      \\
			         &  &  &    & CPU     & $-$    & $-$   & \textbf{0.0167}  & 0.3823  & 0.0873  &0.0301    \\
			\hline
			\multirow{2}*{divorce} & \multirow{2}*{ash219} & \multirow{2}*{15} & \multirow{2}*{15}
                                & IT      & $-$    & $-$    & 1632     & 3024    & 1204    & \textbf{910}      \\
			         &  &  &    & CPU     & $-$    & $-$    & 0.4559  & 0.3216  & 0.0684  & \textbf{0.0291}    \\
			\hline
			\multirow{2}*{ash958} & \multirow{2}*{ash219$^{\top}$} & \multirow{2}*{15} & \multirow{2}*{14}
                                & IT      & $-$    & $-$    & \textbf{5038}     & 6105    & 5782    & 5265    \\
			         &  &  &    & CPU     & $-$    & $-$    & 3.2982  & 20.8641  & \textbf{2.6898}  & 4.2691    \\
			\hline
			\multirow{2}*{ash219} & \multirow{2}*{ash958$^{\top}$} & \multirow{2}*{15} & \multirow{2}*{15}
                                & IT      & $-$    & $-$    & 4976    & 1792    & \textbf{1709}    & 1718    \\
			         &  &  &    & CPU     & $-$    & $-$    & \textbf{3.0925}  & 9.0248 & 3.7558  & 3.8908    \\
			\hline
			\multirow{2}*{ash958} & \multirow{2}*{Worldcities$^{\top}$} & \multirow{2}*{15} & \multirow{2}*{15}
                                & IT      & $-$   & $-$    & 34572     & 5978     & 5756   & \textbf{5353}    \\
			         &  &  &    & CPU     & $-$   & $-$    & 23.6345   & 26.7466  & \textbf{4.035}  & 6.1356    \\
           \bottomrule 
		\end{tabular}
	\end{threeparttable}
    }
\end{table}

\begin{example}\label {exp6.3}
CT Data. The test problems of two-dimensional tomography are implemented in the function \textbf{seismictomo $(N, s, p)$} and the function \textbf{paralleltomo $(N, \theta, q)$} in the MATLAB package AIR TOOLS \cite{PCH2018}, where $N$ represents that a cross-section of the subsurface is divided into N equally spaced intervals in both dimensions creating $N^2$ cells and $s, p, \theta$ and $q$ denote the number of sources, number of receivers, angle of parallel rays and number of parallel rays. We set $N=40, \ \theta = 0:200$ and $q=100$ in the function \textbf{paralleltomo $(N, \theta, q)$}, which generates an exact solution $x_*$ of size $1600 \times 1$ and $X^* = reshape(x_*, 40, 40)$, and let $N = 30, s = 60$ and $p = 100$ in the function \textbf{seismictomo $(N, s, p)$}, which generates an exact solution $x_*$ of size $900 \times 1$ and $X^* = reshape(x_*, 30, 30)$.
For given $p, q$, the entries of $A$ and $B$ are generated from standard normal distributions, i.e., $ A=randn(p, m),\ \ B=randn(n, q)$. $C$ is obtained by $C = AX^*B$. 
\end{example}
All computations start from the initial matrix $X_0=O$ and run 4000 iterations on the paralleltomo function and run 5000 iterations on the seismictomo function.
In the following experiments, the structural similarity index (SSIM) between the two images X and Y was used to evaluate the quality of the recovered images. SSIM is defined as
\begin{equation*}
  SSIM=\frac{\left(2\mu_X\mu_Y+C_1\right)\left(2\delta_{XY}+C_2\right)}{\left(\mu_X^2+\mu_Y^2+C_1\right)\left(\delta_X^2+\delta_Y^2+C_2\right)},
\end{equation*}
where $\mu_X,\mu_Y$ and $\delta_X^2,\delta_Y^2$ are the means and variances of image $X,Y,$ respectively. $\delta_{XY}$ is the covariance of images X and Y, $C_1$ and $C_2$ are brightness and contrast constants. The mean of the image represents the brightness of the image and the variance of the image indicates the contrast of the image. Criteria for judging SSIM: SSIM is a number between 0 and 1, and the larger the SSIM value is, the smaller the difference between the two images is. Numerical results are shown in Fig.\ref{figure:5} and Fig.\ref{figure:6}. The convergence conclusions similar to the previous two groups of experiments are verified again.

\begin{figure}[htbp]
  \centering
  \subfigure[Reference image]{\includegraphics[width=4.2cm]{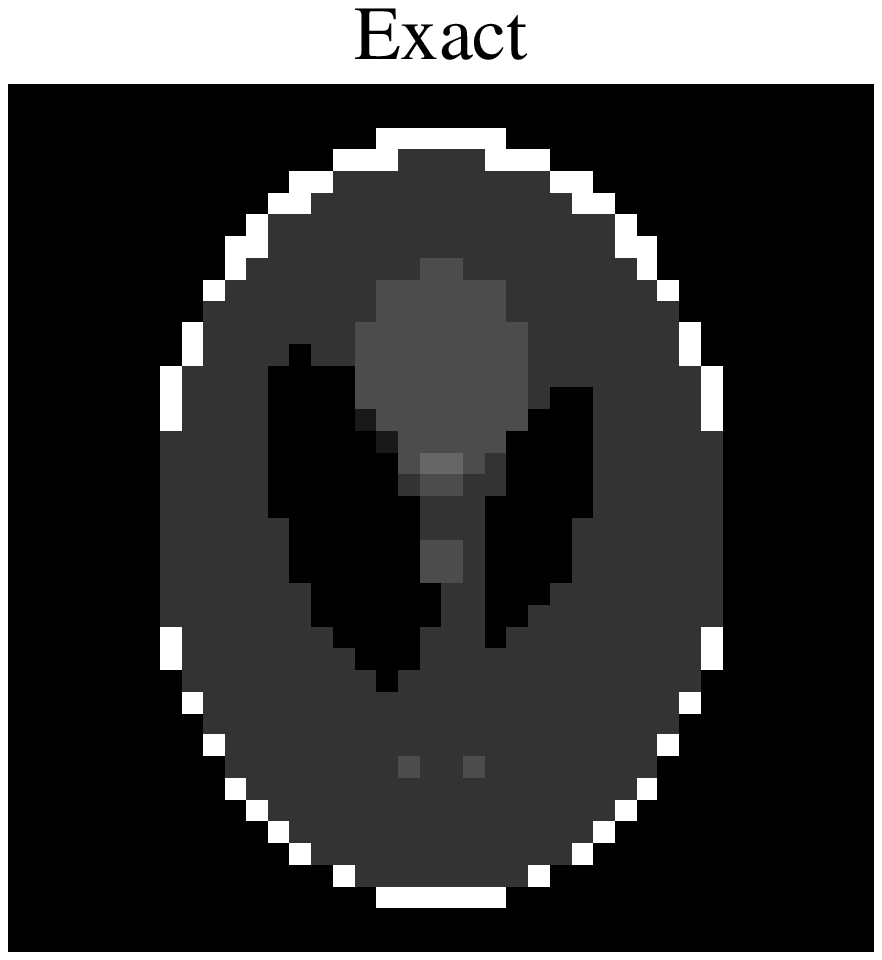}}
  \subfigure[SSIM=0.3658, IT=4000]{\includegraphics[width=4.2cm]{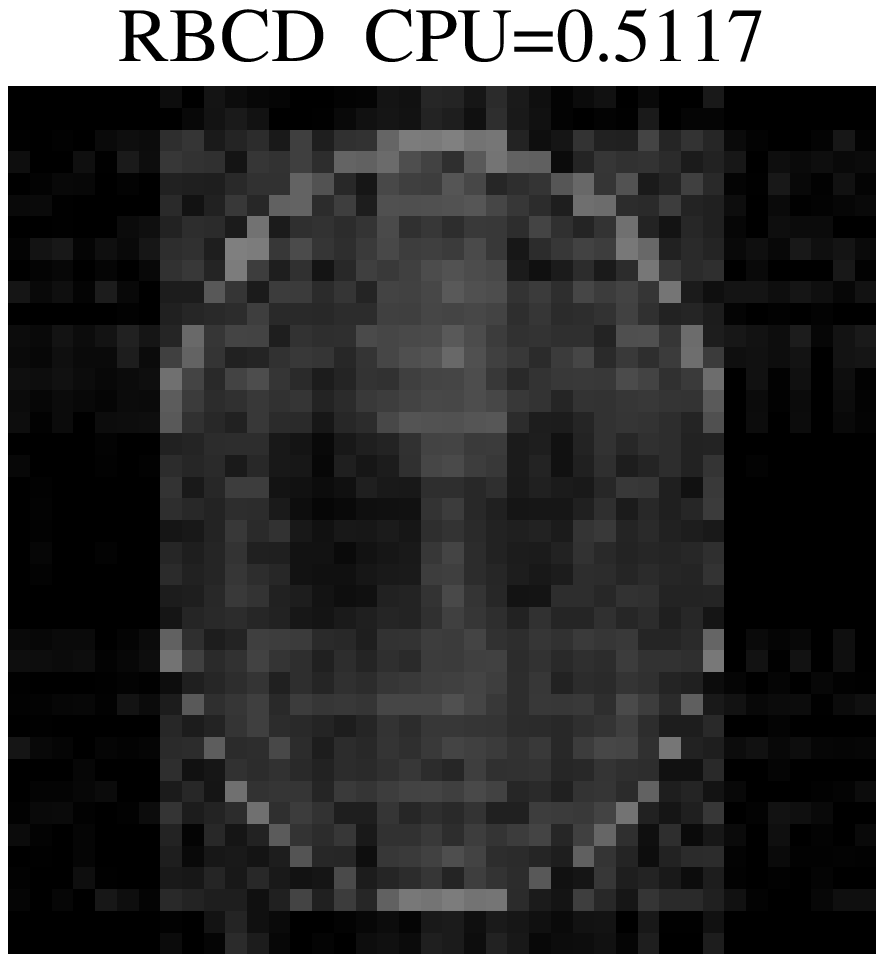}}
  \subfigure[SSIM=0.9992, IT=4000]{\includegraphics[width=4.2cm]{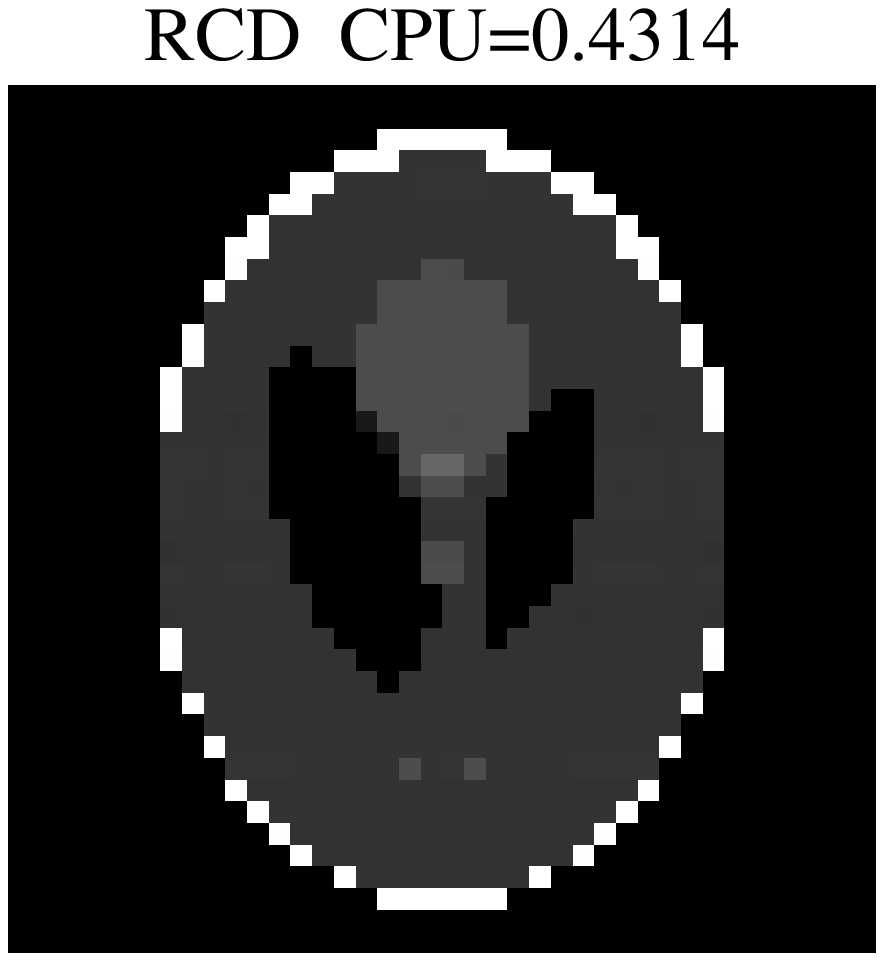}}\\ 
  \subfigure[SSIM=0.9999, IT=4000]{\includegraphics[width=4.2cm]{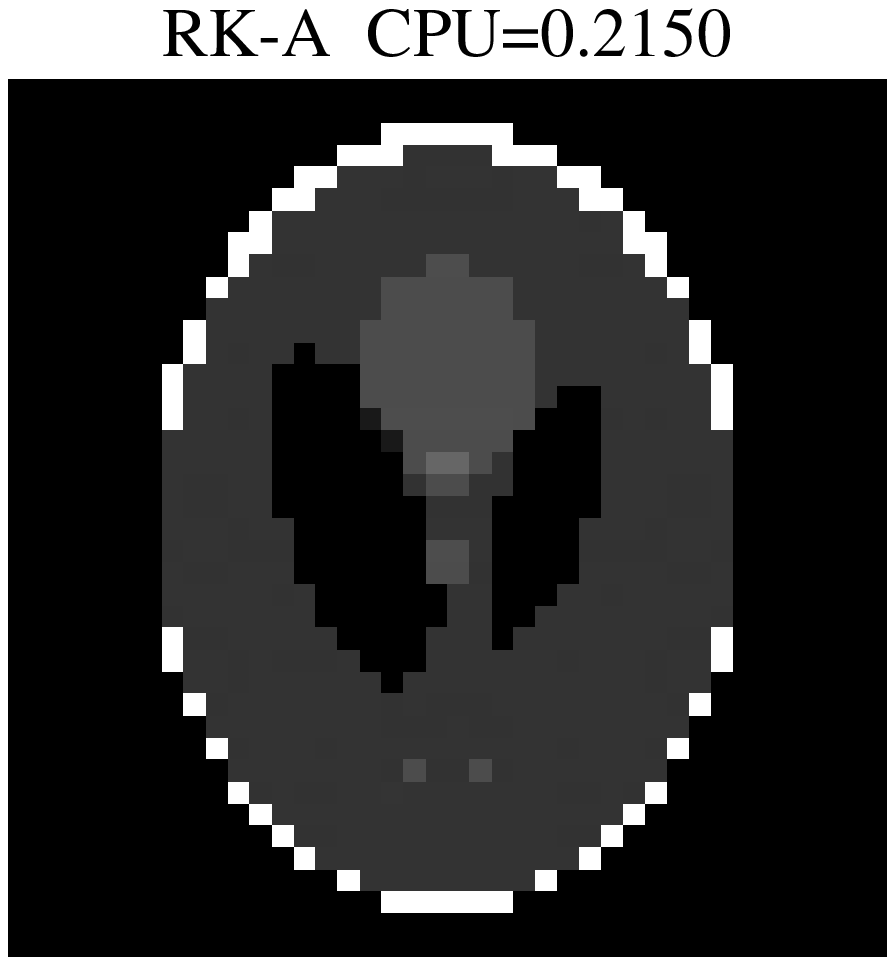}}
  \subfigure[SSIM=1.0000, IT=2764]{\includegraphics[width=4.2cm]{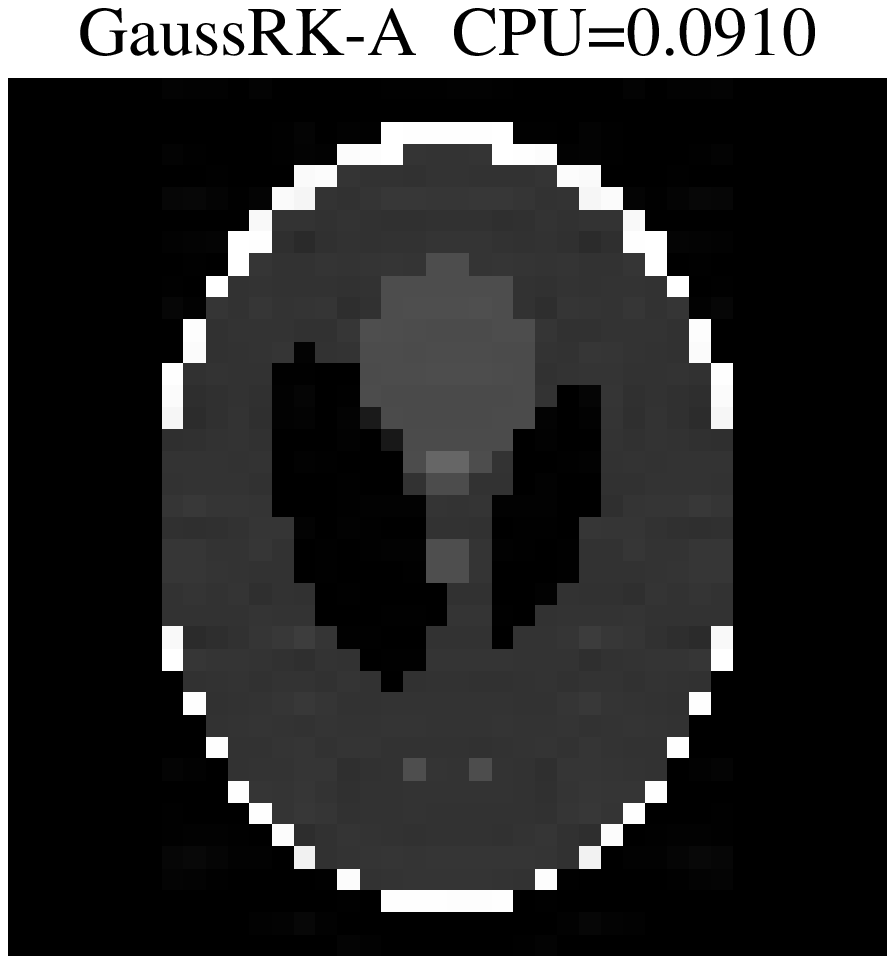}}
  \subfigure[SSIM=1.0000, IT=827]{\includegraphics[width=4.2cm]{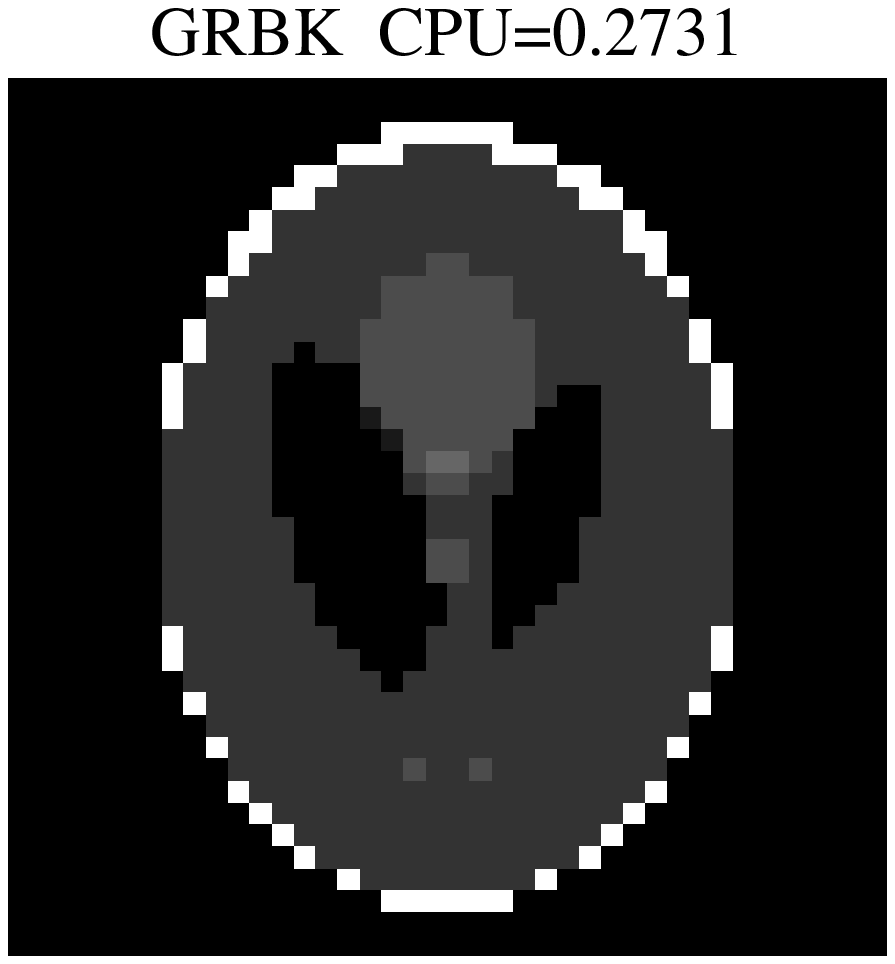}}

  \caption{Performance of the exact figure and five methods for paralleltomo test problem with n = 40.}\label{figure:5}
\end{figure}

\begin{figure}[htbp]
  \centering
  \subfigure[Reference image]{\includegraphics[width=4.2cm]{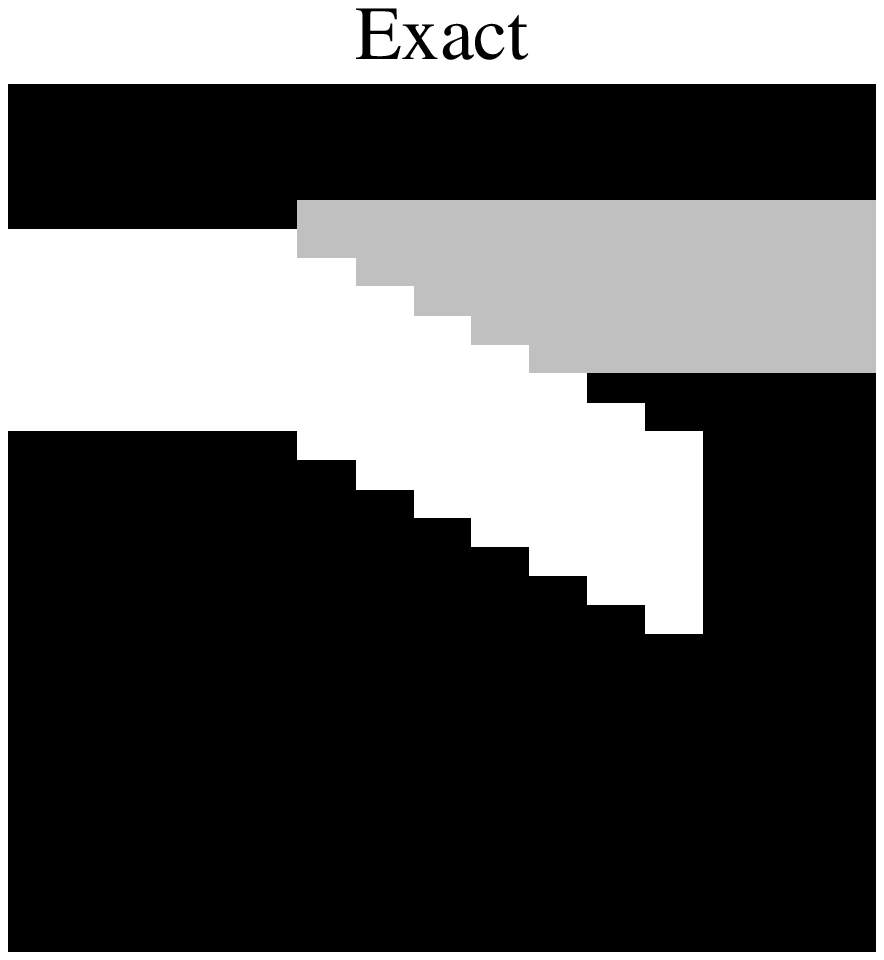}}
  \subfigure[SSIM=0.3782, IT=5000]{\includegraphics[width=4.2cm]{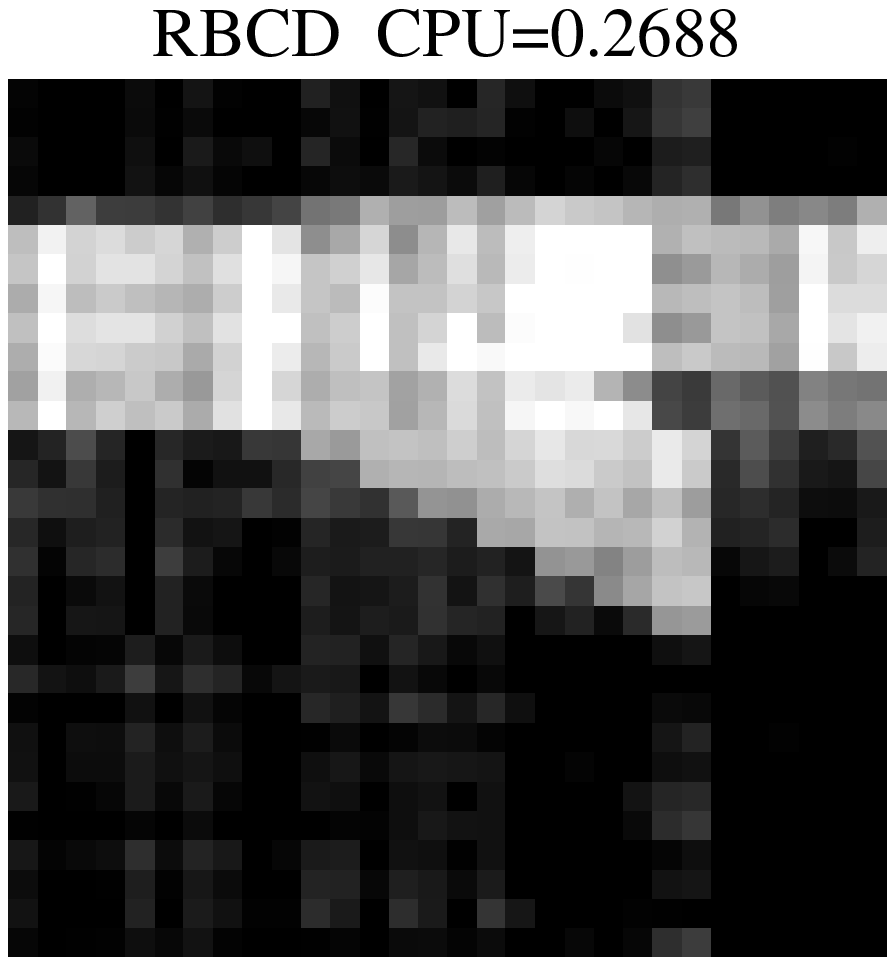}}
  \subfigure[SSIM=0.9997, IT=2999]{\includegraphics[width=4.2cm]{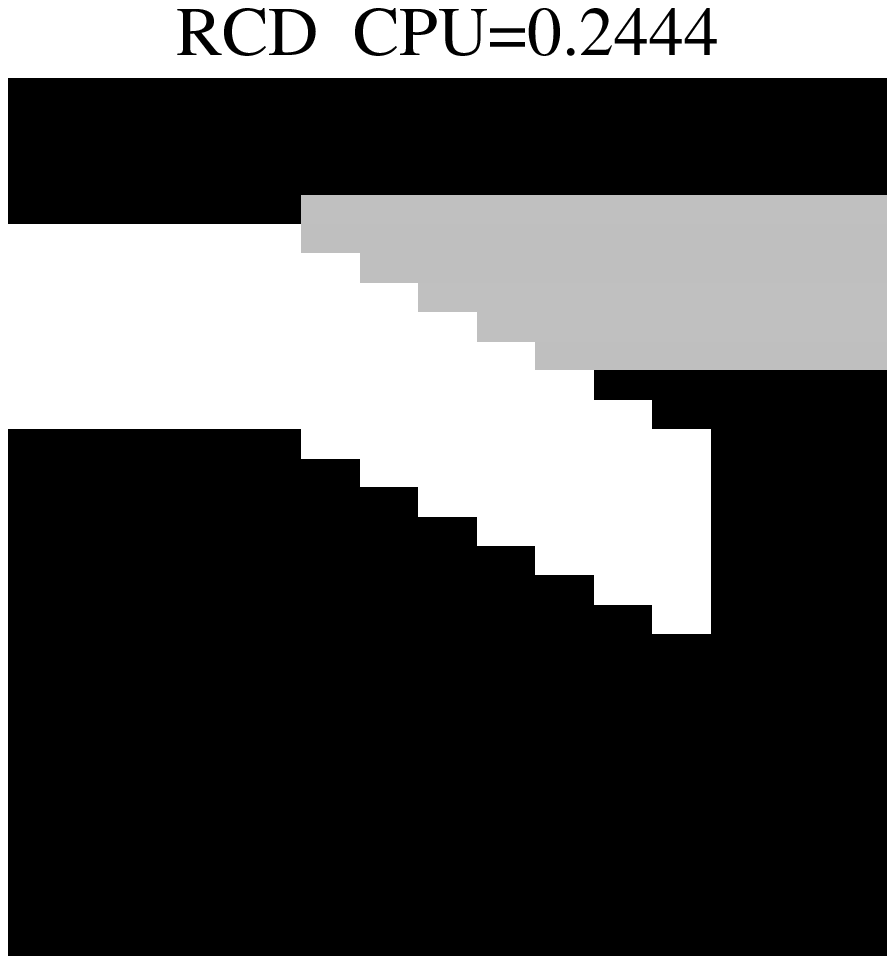}} \\
  \subfigure[SSIM=0.9998, IT=2620]{\includegraphics[width=4.2cm]{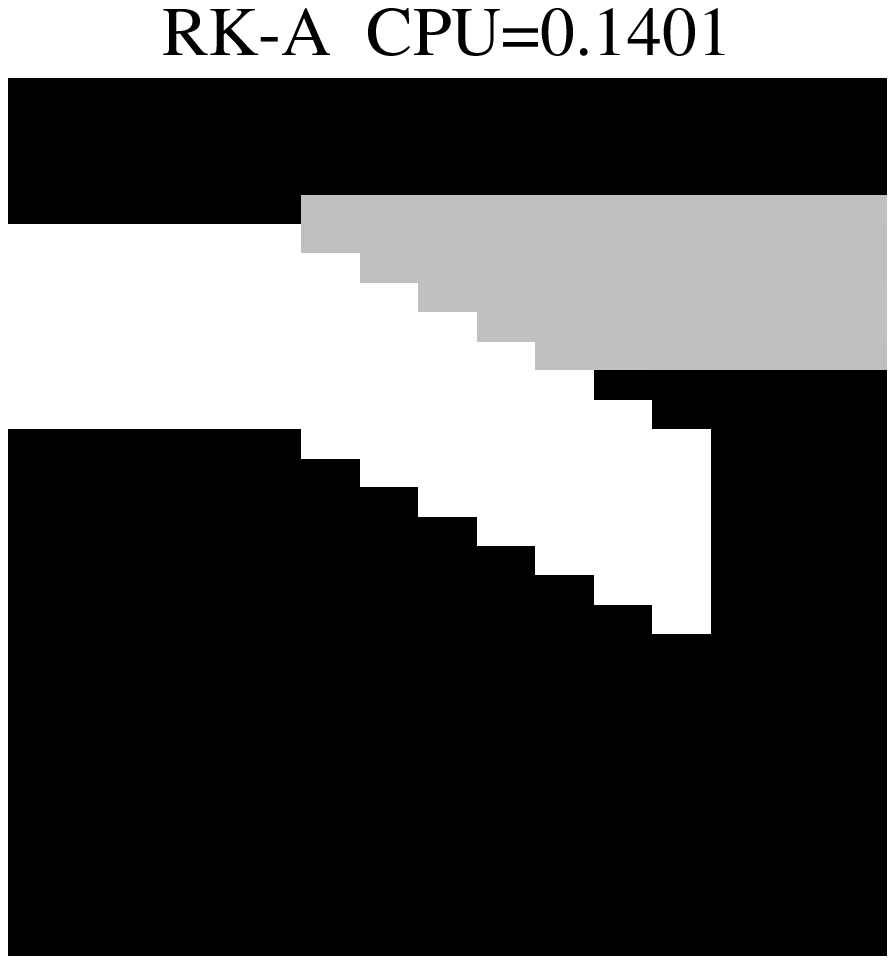}}
  \subfigure[SSIM=0.9999, IT=1549]{\includegraphics[width=4.2cm]{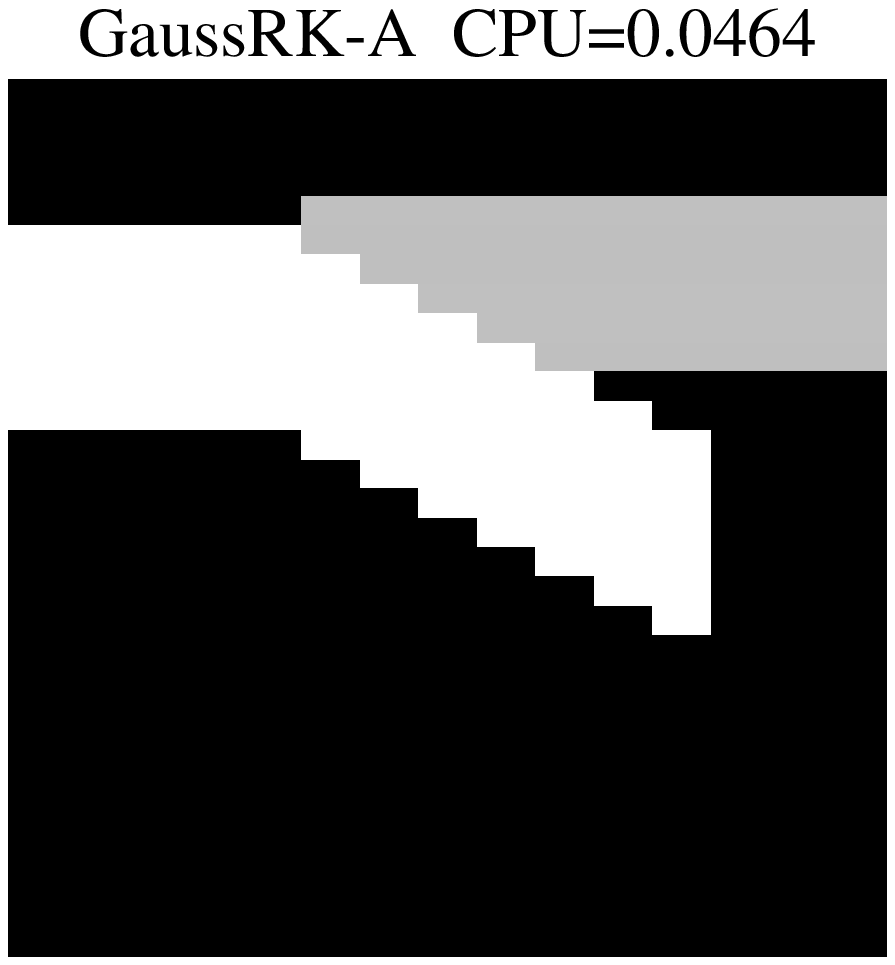}}  
  \subfigure[SSIM=0.9999, IT=158]{\includegraphics[width=4.2cm]{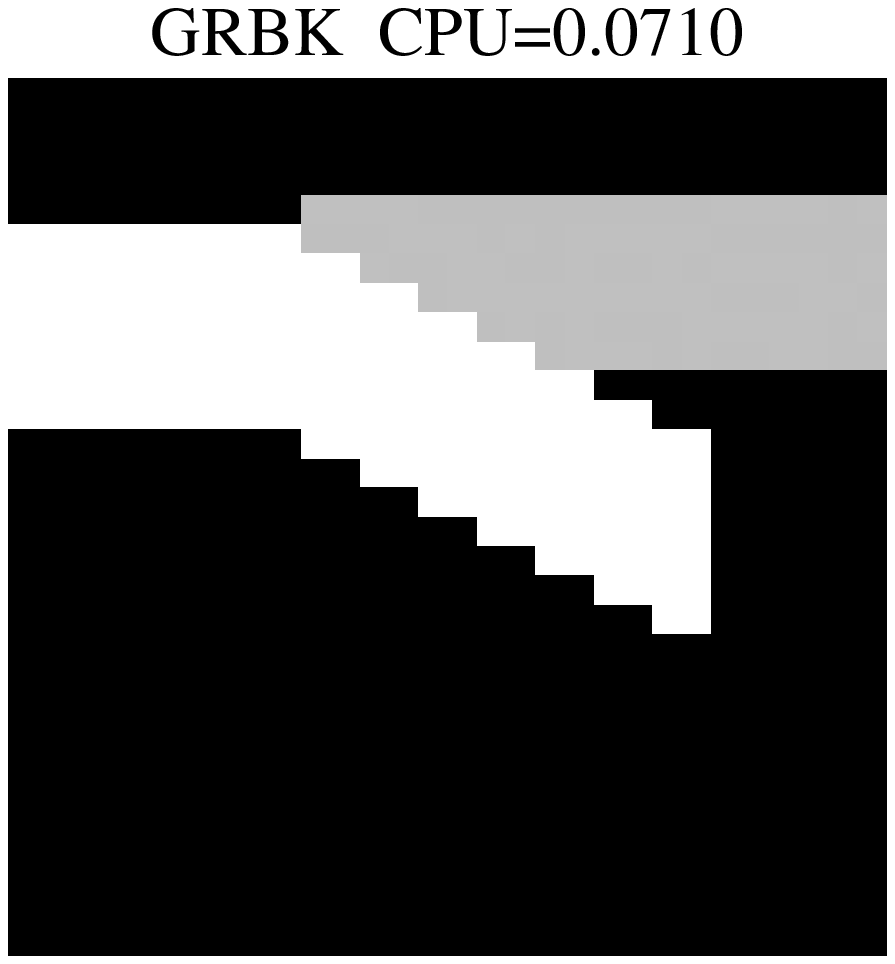}}

  \caption{Performance of the exact figure and five methods for seismictomo test problem with n = 30.}\label{figure:6}
\end{figure}
The maximum number of iterations for all these methods is set no more than 4000. From Fig.\ref{figure:5} and Fig.\ref{figure:6}, we can see that the RCD, RK-A, GaussRK-A, and GRBK methods recovered by the sketch-and-project method perform better than the RBCD method in terms of both image processing and CPU times. In Fig.\ref{figure:5}, the SSIM value of the recovery image through both the GaussRK-A and GRBK methods is around 1. Since the GRBK method needs to calculate the pseudoinverse, it requires more CPU times than the GaussRK-A method. In Fig.\ref{figure:6}, we can see that all methods have almost recovered this image except the RBCD method.

\section{Conclusions}
\label{sec:7}
In this paper, we have proposed a sketch-and-project method for solving the matrix equation $AXB=C$. The convergence of the generalized iterative method is explored. Meanwhile, by varying its three parameters, we recover some well-known algorithms as special cases. Numerical experiments show that in a series of methods of vector-matrix product, Gaussian-type methods are competitive in terms of IT and CPU time. It is clear to see that our method allows for a much wider selection of three parameters, which leads to a series of new specific methods. Based on this skecth-and project method, we will investigate new methods for solving nonlinear matrix equations in our future work.

\appendix

\section{Proof for Lemmas}

\noindent\textbf{Proof of Lemma \ref{EP}}
\begin{proof}
Since $\mathbf{E}\left[A^2\right]=\mathbf{E}\left[A^{\top}A\right],\  \mathbf{E}\left[A^{\top}\right]\mathbf{E}\left[A\right]=\left(\mathbf{E}\left[A\right]\right)^{\top}\mathbf{E}\left[A\right]$, to obtain the conclusion, we need to prove $\mathbf{E}\left[A^{\top}A\right]\succeq \mathbf{E}\left[A^{\top}\right]\mathbf{E}\left[A\right],$ i.e., $$\mathbf{E}\left[ \left(A^{\top}-\mathbf{E}\left[A^{\top}\right]\right)\left(A-\mathbf{E}\left[A\right]\right) \right]\succeq  0.$$ By the definition, it can be seen that for arbitrary column vector $c \in \mathbb{R}^{n}$, $A$ is called a positive semi-definite matrix  if $c^{\top}Ac \geq 0$. So we just need to prove that $$\mathbf{E}\left[ c^{\top}\left(A^{\top}-\mathbf{E}\left[A^{\top}\right]\right)\left(A-\mathbf{E}\left[A\right]c\right) \right]\geq 0,$$ i.e.,
	$$\mathbf{E}\left[ \left(c^{\top}A^{\top}-\mathbf{E}\left[c^{\top}A^{\top}\right]\right)\left(Ac-\mathbf{E}\left[Ac\right]\right) \right]\geq 0.$$
	Since $A \in \mathbb{R}^{n\times n}$, $Ac \in \mathbb{R}^{n}$ is a column vector. For convenience, let $c^{\top}A^{\top}=(y_1,y_2,...,y_n)=Y^{\top}$, we have
	\begin{equation*}
		\begin{split}
			\mathbf{E}\left[ \left(Y^{\top}-\mathbf{E}\left[Y^{\top}\right]\right)\left(Y-\mathbf{E}\left[Y\right]\right) \right] & = \mathbf{E}\left[Y^{\top}Y-\mathbf{E}\left[Y^{\top}\right]Y-Y^{\top}\mathbf{E}\left[Y\right]+\mathbf{E}\left[Y^{\top}\right]\mathbf{E}\left[Y\right]\right]\\
			& = \mathbf{E}\left[Y^{\top}Y\right]-\mathbf{E}\left[Y^{\top}\right]\mathbf{E}\left[Y\right]-\mathbf{E}\left[Y^{\top}\right]\mathbf{E}\left[Y\right]+\mathbf{E}\left[Y^{\top}\right]\mathbf{E}\left[Y\right] \\
			& = \mathbf{E}\left[Y^{\top}Y\right]-\mathbf{E}\left[Y^{\top}\right]\mathbf{E}\left[Y\right] \\
			& = \mathbf{E}\left[y_1^2+y_2^2+...+y_n^2\right]-\left[\left(\mathbf{E}\left[y_1\right]\right)^2+\left(\mathbf{E}\left[y_2\right]\right)^2+...+\left(\mathbf{E}\left[y_n\right]\right)^2\right] \\
			& = Dy_1+Dy_2+...+Dy_n \geq 0,
		\end{split}
	\end{equation*}
where $Dy_i$ is the variance $y_i$ for $i=1,2,\cdots, n$.
	Therefore, $\mathbf{E}\left[ \left(A^{\top}-\mathbf{E}\left[A^{\top}\right]\right)\left(A-\mathbf{E}\left[A\right]\right) \right]$ is positive semi-definite.
\end{proof}

\noindent\textbf{Proof of Lemma \ref{M12}}


\begin{proof}
	For $\forall X_2 \in M_2$, then $\exists Y_2\ s.t.\ A^{\top}AY_2BB^{\top}=X_2$. Let $Y_1=AY_2B$, there exsits
	$$A^{\top}AY_2BB^{\top}=A^{\top}Y_1B^{\top}=X_2,$$
	which means $X_2 \in M_1$.
	
	For $\forall X_1 \in M_1$, then $\exists Y_1\ s.t.\ A^{\top}Y_1B^{\top}=X_1$, which means matrix equation $A^{\top}XB^{\top}=C$ has a solution. Hence, we konw that $A^{\top}(A^{\top})^{\dag}X_1(B^{\top})^{\dag}B^{\top}=X_1$. Based on the nature of the pseudoinverse, we can obtain
	$$(A^{\top})^{\dag}=(A^{\dag})^{\top}=(A^{\dag}AA^{\dag})^{\top}=\left(A^{\dag}(AA^{\dag})\right)^{\top}=(AA^{\dag})^{\top}(A^{\dag})^{\top}=AA^{\dag}(A^{\dag})^{\top},$$
	$$(B^{\top})^{\dag}=(B^{\dag})^{\top}=(B^{\dag}BB^{\dag})^{\top}=\left((B^{\dag}B)B^{\dag}\right)^{\top}=(B^{\dag})^{\top}(B^{\dag}B)^{\top}=(B^{\dag})^{\top}B^{\dag}B.$$
	Thus $X_1$ can be written as
	$X_1=A^{\top}AA^{\dag}(A^{\dag})^{\top}X_1(B^{\dag})^{\top}B^{\dag}BB^{\top}=A^{\top}AW_2BB^{\top}$
	where $W_2=A^{\dag}(A^{\dag})^{\top}X_1(B^{\dag})^{\top}B^{\dag}$. It holds $$A^{\top}Y_1B^{\top}=A^{\top}AW_2BB^{\top}=X_1.$$
	Thus, $X_1$ is also in the set $M_2$.
	Therefore, $M_1=M_2.$
\end{proof}

\noindent\textbf{Proof of Lemma \ref{lem:7.1}}
\label{appendix1}
\begin{proof}
	For any matrix $M$, the pseudoinverse satisfies the identity $M^{\dagger}MM^{\dagger} = M^{\dagger}$. Let $M=S^{\top}AG^{-1}A^{\top}S$, we get
	\begin{equation*}
		\begin{split}
			\left(Z_1'\right)^2 & = G^{-1}A^{\top}S\left(S^{\top}AG^{-1}A^{\top}S\right)^{\dagger}S^{\top}AG^{-1}A^{\top}S\left(S^{\top}AG^{-1}A^{\top}S\right)^{\dagger}S^{\top}A \\
			& = G^{-1}A^{\top}SM^{\dagger}MM^{\dagger}S^{\top}A  \\
			& = G^{-1}A^{\top}SM^{\dagger}S^{\top}A  \\
			& = G^{-1}A^{\top}S\left(S^{\top}AG^{-1}A^{\top}S\right)^{\dagger}S^{\top}A = Z_1',
		\end{split}
	\end{equation*}
	then have
	\begin{equation*}
		\left(Z_2 \otimes Z_1'\right)^2 = \left(Z_2 \otimes Z_1'\right)\left(Z_2 \otimes Z_1'\right) = Z_2^2 \otimes \left(Z_1'\right)^2 = Z_2 \otimes Z_1',
	\end{equation*}
	and thus both $Z_2 \otimes Z_1'$ and $I-Z_2 \otimes Z_1'$ are projection matrices.
	To show that $Z_2 \otimes Z_1'$ is an orthogonal projection with respect to the $\left(I \otimes G\right)$-inner product, we need to verify that $\left(Z_2 \otimes Z_1'\right)\left[(BP) \otimes \left(G^{-1}A^{\top}S\right)\right]= (BP)\otimes (G^{-1}A^{\top}S)$ and for every $y\in Null\left((P^{\top}B^{\top}) \otimes (S^{\top}A)\right)$ there exists $\left(Z_2 \otimes Z_1'\right)y=0$.
	
	The first relation is obtained from the properties of the pseudoinverse: $\left(M^{\top}M\right)^{\dagger}M^{\top}=M^{\dagger}$ and $MM^{\dagger}M=M$. Setting $M=G^{-\frac{1}{2}}A^{\top}S$, we have
	\begin{equation*}
		\begin{split}
			Z_1'\left(G^{-1}A^{\top}S\right) & = G^{-1}A^{\top}S\left(S^{\top}AG^{-1}A^{\top}S\right)^{\dagger}S^{\top}AG^{-1}A^{\top}S \\
			& = G^{-\frac{1}{2}}M\left(M^{\top}M\right)^{\dagger}M^{\top}M \\
			& = G^{-\frac{1}{2}}MM^{\dagger}M  \\
			& = G^{-\frac{1}{2}}M  \\
			& = G^{-\frac{1}{2}}G^{-\frac{1}{2}}A^{\top}S = G^{-1}A^{\top}S.
		\end{split}
	\end{equation*}
	Similarly, denoting $M=BP$, we have
	\begin{equation*}
		\begin{split}
			Z_2BP & = BP\left(P^{\top}B^{\top}BP\right)^{\dagger}P^{\top}B^{\top}BP \\
			& = M\left(MM^{\dagger}\right)M^{\top}M  \\
			& = MM^{\dagger}M = M =BP.
		\end{split}
	\end{equation*}
	Thus the first relation holds.
	For the second relation, it exists
	\begin{equation*}
		\begin{split}
			\left(Z_2 \otimes Z_1'\right)y  =& \left\{\left[BP\left(P^{\top}B^{\top}BP\right)^{\dagger}P^{\top}B^{\top}\right] \otimes \left[G^{-1}A^{\top}S\left(S^{\top}AG^{-1}A^{\top}S\right)^{\dagger}S^{\top}A \right]\right\}y \\
			 =& \left\{\left[BP\left(P^{\top}B^{\top}BP\right)^{\dagger}\right] \otimes \left[G^{-1}A^{\top}S\left(S^{\top}AG^{-1}A^{\top}S\right)^{\dagger} \right]\right\}\\
			&\left[(P^{\top}B^{\top}) \otimes (S^{\top}A)\right]y  \\
			 =& 0.
		\end{split}
	\end{equation*}
\end{proof}

\noindent\textbf{Proof of Lemma \ref{lem:7.3}}

\label{appendix3}
\begin{proof}
Let $\widetilde{Z_1}= GZ_1'$. Since $\mathbf{E}\left[Z_2 \otimes \widetilde{Z_1}\right]$ is invertible  and $ G^{-\frac{1}{2}}\widetilde{Z_1}G^{-\frac{1}{2}}$  is an idempotent matrix, the spectrum of $\left(I \otimes G^{-\frac{1}{2}}\right)\left(Z_2 \otimes \widetilde{Z_1}\right)\left(I \otimes G^{-\frac{1}{2}}\right)$ is contained in $\left\{0,1\right\}$, we have$\left(I \otimes G^{-\frac{1}{2}}\right)\mathbf{E}\left[Z_2 \otimes \widetilde{Z_1}\right]\left(I \otimes G^{-\frac{1}{2}}\right)$ is positive definite.
	With
	\begin{equation*}
		\begin{split}
			\rho & = 1- \lambda_{min}\left(\mathbf{E}\left[Z_2 \otimes Z_1'\right]\right) \\
			& = 1- \lambda_{min}\left(\mathbf{E}\left[Z_2 \otimes \left(G^{-\frac{1}{2}}\widetilde{Z_1}G^{-\frac{1}{2}}\right)\right]\right).
		\end{split}
	\end{equation*}
	it holds $\rho < 1$. If $B^{\top}\otimes A$ is not full
	column rank, then there would be $0\neq x \in \mathbb{R}^{nm}$ such that $\left(B^{\top}\otimes A\right)x=0$. Therefore, we have $ \widetilde{Z_1}XZ_2=0$ and $\mathbf{E}\left[Z_2^{T} \otimes \widetilde{Z_1}\right]vec(X)=0$, which contradicts the assumption that $\mathbf{E}\left[Z_2^{T} \otimes \widetilde{Z_1}\right]$ is invertible. Analogously, $(BP)^{\top} \otimes (S^TA)$ is also full column rank.  Finally, since $B^{\top} \otimes A$ is full column rank, $X^*$ must be unique (recall that assume throughout the paper that $AXB=C$ is consistent).
\end{proof}

\noindent\textbf{Proof of Lemma \ref{lem:4.3}}
\label{appendix}

\begin{proof}
Using the properties of Kronecker product and considering the matrices are symmetric semi-definite,
we have
\begin{equation*}
  \lambda_{\min}\left( A^{\top} \otimes B \right) = \lambda_{\min} \left( A\right) \lambda_{\min} \left( B\right).
\end{equation*}
To prove \eqref{eq:4.5}, we only need to demonstrate that
\begin{equation*}
  \lambda_{\min} \left( A_2\right) \lambda_{\min} \left( B_2\right) \geq  \lambda_{\min} \left( A_1\right) \lambda_{\min} \left( B_1\right).
\end{equation*}
Since $A_2$ and $A_2-A_1$ are symmetric positive definite matrices, by Lemma \ref{lem:4.2} we can obtain the following inequality
\begin{equation*}
  0\leq  \lambda_{\min} \left( A_2-A_1\right) \leq \lambda_{\min} \left( A_2\right)+ \lambda_{\max} \left( -A_1\right).
\end{equation*}
From the fact that $\lambda_{\max} \left( -A_1\right)= -\lambda_{\min} \left( A_1\right)$, hence it results in 
\begin{equation*}
  0\leq  \lambda_{\min} \left( A_2-A_1\right) \leq \lambda_{\min} \left( A_2\right)-\lambda_{\min} \left( A_1\right).
\end{equation*}
Therefore, we get
\begin{equation}\label{eq:4.6}
  \lambda_{\min} \left( A_2\right)\geq \lambda_{\min}\left( A_1\right) \geq 0.
\end{equation}
Similarly, for $B_1,B_2$, there exists 
\begin{equation}\label{eq:4.7}
  \lambda_{\min} \left( B_2\right)\geq \lambda_{\min}\left( B_1\right) \geq 0.
\end{equation}
Combining \eqref{eq:4.6} and \eqref{eq:4.7}, we can obtain
$
  \lambda_{\min} \left( A_2\right) \lambda_{\min} \left( B_2\right) \geq  \lambda_{\min} \left( A_1\right) \lambda_{\min} \left( B_1\right)
$,
i.e., $
  \lambda_{\min}\left( A_2^{\top} \otimes B_2 \right) \geq \lambda_{\min} \left( A_1^{\top} \otimes B_1 \right).
$ The proof is completed.
\end{proof}

\bibliographystyle{model1-num-names}
\bibliography{references}

\end{document}